\tikzset{snake it/.style={decorate, decoration=snake}}
\tikzset{snake it/.style={decorate, decoration=snake}}
\newtheorem{theorem}{Theorem}[section]
\newtheorem{lemma}[theorem]{Lemma}
\newtheorem{proposition}{Proposition}[section]
\theoremstyle{definition}
\newtheorem{definition}[theorem]{Definition}
\newtheorem{example}[theorem]{Example}
\theoremstyle{remark}
\newtheorem{remark}[theorem]{Remark}
\numberwithin{equation}{section}
\newcommand{\exter}[2]{\Lambda^{S^2}_{#1}(2#2 + 1)}
\let\@wraptoccontribs\wraptoccontribs
\begin{document}

\title[Edge partitions of the complete graph and a determinant like function]{Edge partitions of the complete graph \\and a determinant like function }

\author{Steven R. Lippold}
\address{Department of Mathematics and Statistics, Bowling Green State University, Bowling Green, OH 43403 }
\email{steverl@bgsu.edu}


\author{Mihai D. Staic}
\address{Department of Mathematics and Statistics, Bowling Green State University, Bowling Green, OH 43403 }
\address{Institute of Mathematics of the Romanian Academy, PO.BOX 1-764, RO-70700 Bu\-cha\-rest, Romania.}

\email{mstaic@bgsu.edu}


\author{Alin Stancu}
\address{Department of Mathematics, Columbus State  University, Columbus, GA 31907}
\email{stancu\_alin1@columbusstate.edu}

\subjclass[2020]{Primary  15A15, Secondary  05C50, 05C70 }

\keywords{exterior algebra, edge-partitions of graphs}

\begin{abstract} In this paper we prove the case $dim(V_3)=3$ of a conjecture from \cite{sta2} about the exterior operad ${\Lambda}^{S^2}_{V_d}$. For this we introduce a collection of natural involutions  on the set of homogeneous cycle-free $d$-partitions of the complete graph $K_{2d}$, and show that these involutions correspond to the relations in ${\Lambda}^{S^2}_{V_d}(2d+1)$. When $d=3$ this correspondence allows us to give an explicit description of a determinant-like map and to settle the above mentioned conjecture. 
\end{abstract}

\maketitle


%

\section*{Introduction}

The exterior Graded-Swiss-Cheese (GSC) operad  ${\Lambda}^{S^2}_V$ was introduced by the second author in \cite{sta2}. The construction  was inspired by the work of Pirashvili on Higher Hochschild homology \cite{p} and Voronov on Swiss-Cheese operad \cite{vo}, as well as by the explicit description of $H_\bullet^{S^2}(A, A)$ from \cite{cs} and \cite{la}. While some of the properties of  ${\Lambda}^{S^2}_V$ are similar to those of the exterior algebra, there are also many notable differences. For example, there is no algebra structure on ${\Lambda}^{S^2}_V$ (which prompted the definition of a GSC-operad as the closest next best thing). 

Using the setting of GSC-operads one can describe ${\Lambda}^{S^2}_V$ as the quotient of a tensor GSC operad by a certain operad ideal. It was shown  in \cite{sta2} that if $V$ is finite dimensional then ${\Lambda}^{S^2}_V$ is also a finite dimensional graded vector space, more precisely $dim_k({\Lambda}^{S^2}_V(n))=0$, if $n>2dim_k(V)+1$. Moreover, if the dimension of $V_2$ is $2$ then  $dim_k({\Lambda}^{S^2}_{V_2}(5))=1$. In particular, there is a determinant like function $det^{S^2}:V_2^6\to k$ that satisfies an appropriate universality property. It was proved in \cite{sv} that the map  $det^{S^2}$ has a nice geometrical interpretation, essentially detecting when six ordered vectors $(v_{i,j})_{1\leq i<j\leq 4}$ determine the directions of the six sides (edges and diagonals) of a quadrilateral. 

It was conjectured in \cite{sta2} that if $dim_k(V_d)=d$, then $dim_k({\Lambda}^{S^2}_{V_d}(2d+1)=1$. One of the main result of this paper is checking this conjecture for the case $d=3$. In particular, we will get an explicit  description of a map $det^{S^2}:V_3^{15}\to k$ that has an appropriate universality property. We also present a road map on how the general case could be approached.

In the first section we recall the GSC-operad ${\Lambda}^{S^2}_V$ and review some of its properties from \cite{sta2}. While the language of GSC-operads is convenient to define ${\Lambda}^{S^2}_V$, it is not very helpful for explicit computations. For this reason, in this paper we will only look to the  graded vector space structure on  ${\Lambda}^{S^2}_V$. In order to make our presentation self contained, we give an explicit definition for the graded vector space  ${\Lambda}^{S^2}_V$,  striping down the entire algebraic formalism associated with the notion of GSC-operad. We end the first section by recalling a few general definitions and properties for edge partitions of simple undirected graphs. 
 
In the second section we define two actions on ${\Lambda}^{S^2}_{V_d}(2d+1)$ by the groups of permutations $S_d$ and $S_{2d}$. We introduce the distinguished element $E_d$ and study its invariance under these actions. One of the main results of this paper is to show that $\{E_3\}$ is a nontrivial basis for ${\Lambda}^{S^2}_{V_3}(7)$ (this is proven in the last section). The rest of the paper is dedicated to showing how  these purely algebraic results can be understood and proven by analyzing certain edge-partitions of the complete graph $K_{2d}$.

In the third section we discuss a correspondence between $d$-partitions of $K_n$ and a system of generators of  the tensor operad ${\mathcal T}^{S^2}_{V_d}(n+1)$. We develop a graphic calculus for these partitions in relation to ${\Lambda}^{S^2}_{V_d}(n+1)$ and show that in the case of ${\Lambda}^{S^2}_{V_d}(2d+1)$ only the homogeneous cycle-free $d$-partitions give non-trivial elements. 

The linchpin of this paper is Lemma \ref{keylemma}, a combinatorial result  which essentially states that on the set of homogeneous cycle-free $d$-partitions of the graph $K_{2d}$ there is a collection of intrinsic involutions. More precisely, for any cycle-free homogeneous $d$-partition $(\Gamma_1,\dots,\Gamma_d)$ of $K_{2d}$, and any three vertices $x$, $y$ and $z$, there exists a unique cycle-free homogeneous $d$-partition $(\Gamma_1,\dots,\Gamma_d)^{(x,y,z)}$ with identical edges, except on the face $(x,y,z)$, where they differ on at least two edges. After factoring to ${\Lambda}^{S^2}_{V_d}(2d+1)$ these involutions are compatible with the correspondence between generators for ${\mathcal T}^{S^2}_{V_d}(2d+1)$ and $d$-partitions of $K_{2d}$.  

In section four we use Lemma \ref{keylemma} to prove that the dimension of ${\Lambda}^{S^2}_{V_3}(7)$ is at most $1$.   Note that some of the results of section four could be used to prove that the dimension of ${\Lambda}^{S^2}_{V_d}(2d+1)$ is at most $1$ for all $d$. The missing piece is showing that every homogeneous cycle-free $d$-partition of $K_{2d}$ is equivalent with one that has a twin-star component. 

The main result in section five is Theorem \ref{th1A}, which states that there is a signature-like map defined on the set of homogeneous cycle-free $3$-partitions of $K_{6}$ with range in $\{\pm 1\}$, which is compatible with the involutions described above. As a direct application we can define a determinant-like map $det^{S_2}:V^{15}\rightarrow k$, which is $k$-multilinear and has the property that $det^{S^2}(v_{i,j})=0$, if there exist $1\leq x<y<z\leq 6$ such that $v_{x,y}=v_{x,z}=v_{y,z}$. This determinant-like map is essential in proving that $E_3$ is not trivial, and so the dimension of ${\Lambda}^{S^2}_{V_3}(7)$ is 1.

Another interesting consequence of  Theorem \ref{th1A} is that there is a natural orientation-like splitting of the set of homogeneous cycle-free 3-partitions of $K_6$. This is similar with the fact that the set of permutations $S_d$ can be split  into even and odd permutations. 

We also  reformulate the Conjecture from \cite{sta2} in terms of the action of a group $G_d^{cf,h}$ on the set of homogeneous cycle-free $d$-partitions of $K_{2d}$. If this group acts transitively on the above set of partitions, then the dimension of ${\Lambda}^{S^2}_{V_d}(2d+1)$ is at most 1. If there is a signature-like map defined on the set of homogeneous cycle-free $d$-partitions of $K_{2d}$,  which is compatible with the natural involutions, then the dimension of ${\Lambda}^{S^2}_{V_d}(2d+1)$ is at least 1. Such a signature map would imply the existence of a determinant-like map which generalizes those defined for $d=2$ and $3$.

Theorem \ref{th1A} was established by direct computations using MATLAB. There are $\num{756756}$ homogeneous 3-partitions of $K_6$, out of which $\num{66240}$ are cycle-free. 
In the Appendix we consider the natural action of $S_6\times S_3$ on the set of homogeneous cycle-free 3-partitions of $K_6$ and describe the 19 equivalence classes. One can then use this explicit description to give an alternative direct proof for Theorem \ref{th1A} that is not based on MATLAB computations.

\section{Preliminaries}

In this paper $k$ is a field such that $char(k)\neq 2$ and $char(k)\neq 3$. The tensor product $\otimes$ is over the field $k$. For every $d\in \mathbb{N}$ we consider a vector space  $V_d$  of dimension $d$, and  $\{e_1,e_2,...,e_d\}$ is a basis for $V_d$. Moreover we assume that if $d_1\leq d$ we have $V_{d_1}\subseteq V_{d}$ (that is $V_{d_1}$ is generated by $e_1,e_2, ...,e_{d_1}$).

\subsection{The operad ${\Lambda}^{S^2}_V$}
The GSC-operad ${\Lambda}^{S^2}_V$ was introduced in \cite{sta2} as the quotient of the tensor GSC-operad ${\mathcal T}^{S^2}_{V}$ by the ideal ${\mathcal E}^{S^2}_{V}$. While the operad language is very convenient to give a formal general definition, for computational purposes it is much easier to consider an intrinsic description of ${\Lambda}^{S^2}_V$, one that is using only linear algebra. 

Let $V$ be a finite dimensional vector space. For every $n\geq 0$ we denote
$${\mathcal T}^{S^2}_V(n+1)=V^{\otimes \frac{n(n-1)}{2}}=\begin{pmatrix}
k & V & \cdots & V&V\\
 & k & \cdots & V&V\\
 & &\ddots & \vdots & \vdots\\
 & & & k & V\\
 \otimes& & & & k\end{pmatrix}.$$
Since the tensor product is over $k$, the $k$'s on the diagonal do not play an important role, they are mostly for symmetry purpose. A simple tensor in ${\mathcal T}^{S^2}_V(n+1)$ will be denoted by
$$\begin{pmatrix}
1 & u_{1,2} &u_{1,3}& \cdots & u_{1,n-1}&u_{1,n}\\
 & 1 & u_{2,3}&\cdots & u_{2,n-1}&u_{2,n}\\
 &  & 1&\cdots & u_{3,n-1}&u_{3,n}\\
 & &&\ddots & \vdots & \vdots\\
 & & && 1 & u_{n-1,n}\\
 \otimes& & && & 1\end{pmatrix} \in V^{\otimes \frac{n(n-1)}{2}},$$
where $u_{i,j}\in V$.    A general element in ${\mathcal T}^{S^2}_V(n+1)$ is a sum of such simple tensors.

Formally, ${\mathcal E}^{S^2}_V$ is the GSC operad ideal generated by elements of the form $ \begin{pmatrix}
1& v&v\\
&1&v\\
\otimes& &1
\end{pmatrix}$ where $v\in V$. Since we want to avoid the operad structure in this paper,  we will give an  explicit description for the elements of ${\mathcal E}^{S^2}_V$.

For every $n\geq 0$ we take
${\mathcal E}^{S^2}_V(n+1)$ to be the subspace of $V^{\otimes \frac{n(n-1)}{2}}$ that  is linearly generated by simple tensors
$$\begin{pmatrix}
1 & u_{1,2} &u_{1,3}& \cdots & u_{1,n-1}&u_{1,n}\\
 & 1 & u_{2,3}&\cdots & u_{2,n-1}&u_{2,n}\\
 & &&\ddots & \vdots & \vdots\\
 & & && 1 & u_{n-1,n}\\
 \otimes& & && & 1\end{pmatrix}\in {\mathcal T}^{S^2}_V(n+1),$$ with the property that there exist $1\leq i<j<k\leq n$ such that $u_{i,j}=u_{i,k}=u_{j,k}$. We are now ready to reformulate the definition of  ${\Lambda}^{S^2}_V$.
\begin{definition} Let $V$ be a $k$ vector space. We define ${\Lambda}^{S^2}_V$ as a graded vector space with the component in degree $n+1$ defined  as the quotient vector space
$${\Lambda}^{S^2}_V(n+1)=\frac{{\mathcal T}^{S^2}_V(n+1)}{{\mathcal E}^{S^2}_V(n+1)},$$
 for every $n\geq 0$. \label{defla}
\end{definition}

\begin{remark}
As pointed out above, ${\Lambda}^{S^2}_V$  is not just a graded vector space, it has a GSC operad structure, but for the purpose of this paper this definition will suffice. The choice of grading from Definition \ref{defla} is consistent with the GSC operad structure. There is an argument for shifting it down by one (i.e. making the degree of ${\Lambda}^{S^2}_V(n+1)$ be $n$ not $n+1$),  that would make it more consistent with the usual grading of the exterior algebra.
\end{remark}

\begin{remark}
It was proved in \cite{sta2} that if $dim(V)=d$, then $dim({\Lambda}^{S^2}_V(n+1))=0$ for all $n>2d$.
In the same paper it was conjectured that  $dim({\Lambda}^{S^2}_V(2d+1))=1$, which was shown to be true in the case $dim(V)=2$. One of the main results this paper is proving this conjecture for  the case $dim(V)=3$.
\end{remark}

\begin{remark}
Let $V_2$ be a vector space with $dim(V_2)=2$. As a byproduct of the fact that $dim({\Lambda}^{S^2}_{V_2}(5))=1$, we get the existence of a determinant like map $det^{S^2}:V_2^6\to k$. More precisely, for $v_{i,j}=\alpha_{i,j}e_1+\beta_{i,j}e_2\in V_2$ we have
\begin{eqnarray} &det^{S^2}((v_{i,j})_{1\leq i<j\leq 4})=&\nonumber\\
&\alpha_{1,2}\alpha_{2,3}\alpha_{3,4}\beta_{1,3}\beta_{2,4}\beta_{1,4}+
\alpha_{1,2}\beta_{2,3}\alpha_{3,4}\beta_{1,3}\beta_{2,4}\alpha_{1,4}+
\alpha_{1,2}\beta_{2,3}\beta_{3,4}\alpha_{1,3}\alpha_{2,4}\beta_{1,4}&\nonumber\\
&+\beta_{1,2}\beta_{2,3}\alpha_{3,4}\alpha_{1,3}\alpha_{2,4}\beta_{1,4}+
\beta_{1,2}\alpha_{2,3}\beta_{3,4}\beta_{1,3}\alpha_{2,4}\alpha_{1,4}+
\beta_{1,2}\alpha_{2,3}\beta_{3,4}\alpha_{1,3}\beta_{2,4}\alpha_{1,4}&\\
&-\beta_{1,2}\beta_{2,3}\beta_{3,4}\alpha_{1,3}\alpha_{2,4}\alpha_{1,4}-
\beta_{1,2}\alpha_{2,3}\beta_{3,4}\alpha_{1,3}\alpha_{2,4}\beta_{1,4}-
\beta_{1,2}\alpha_{2,3}\alpha_{3,4}\beta_{1,3}\beta_{2,4}\alpha_{1,4}&\nonumber\\
&-\alpha_{1,2}\alpha_{2,3}\beta_{3,4}\beta_{1,3}\beta_{2,4}\alpha_{1,4}-
\alpha_{1,2}\beta_{2,3}\alpha_{3,4}\alpha_{1,3}\beta_{2,4}\beta_{1,4}-
\alpha_{1,2}\beta_{2,3}\alpha_{3,4}\beta_{1,3}\alpha_{2,4}\beta_{1,4}.& \nonumber
\end{eqnarray}
Essentially, $det^{S^2}$ is a the unique nontrivial multilinear map defined on $V_2^6$ with the property that $det^{S^2}((v_{i,j})_{1\leq i<j\leq 4})=0$,  if there exist $1\leq x<y<z\leq 4$ such that $v_{x,y}=v_{x,z}=v_{y,z}$.

The $det^{S^2}$ map has an interesting geometrical interpretation similar with the usual determinant map.
More precisely, it was proved in \cite{sv}  that $det^{S^2}((v_{i,j})_{1\leq i<j\leq 4})=0$ if and only if
there exist $\lambda_{i,j}\in k$ for  $1\leq i<j\leq 4$  not all zero, such that for all $1\leq x< y< z\leq 4$ we have
$$\lambda_{x,y}v_{x,y}+\lambda_{y,z}v_{y,z}+\lambda_{z,x}v_{z,x}=0,$$
with the convention that $\lambda_{i,j}=\lambda_{j,i}$, and $v_{i,j}=-v_{j,i}$ for all $1\leq j<i\leq 4$. This result is the analog of the fact that for two vectors, $v_1=\alpha_1e_1+\beta_1e_2,\;v_2=\alpha_2e_1+\beta_2e_2\in V_2$, we have $det\begin{pmatrix}
\alpha_1& \alpha_2\\
\beta_1&\beta_2
\end{pmatrix}=0$ if and only if  there exist $\lambda_1, \lambda_2\in k$ not both zero such that $\lambda_1v_1+\lambda_2v_2=0$ (i.e. $v_1$ and $v_2$ are collinear).
\label{rem14}
\end{remark}
\begin{remark} The map $det^{S^2}$ defined above does not seem to fit in the language of hyper-determinants from 
\cite{gkz}. One can still hope to use some of the techniques developed there in order to get a better understanding of our construction. 
\end{remark}

Next we recall a few technical results from \cite{sta2} that will be useful in this paper.
\begin{lemma} Let $V$ be a vector space, $\omega \in V$, and $n\geq 3$. Take  $$f=\begin{pmatrix}
1& v_{1,2}&...&v_{1,n-1}&v_{1,n}\\
& 1&...&v_{2,n-1}&v_{2,n}\\
& &...&.&.\\
& & &1&v_{n-1,n}\\
\otimes& & & &1
\end{pmatrix}\in {\mathcal T}^{S^2}_V(n+1),$$ such that $\omega$ appears at least $n$ times amongst  the elements in the set $\{v_{i,j}\;\vert \; 1\leq i<j\leq n\}$, then $\hat{f}=0\in {\Lambda}^{S^2}_V(n+1)$. \label{lemma2}
\end{lemma}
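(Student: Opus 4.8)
The plan is to induct on the length $m$ of a shortest cycle in the ``$\omega$-graph'' $G\subseteq K_n$ whose edges are exactly the pairs $\{i,j\}$ with $v_{i,j}=\omega$. First I would note that $G$ has at least $n$ edges on $n$ vertices, so it is not a forest and therefore contains a cycle; hence a shortest cycle exists, of some length $m\in\{3,4,\dots,n\}$. The base case $m=3$ is immediate: a shortest cycle of length $3$ is a triangle $\{x,y,z\}$ with $v_{x,y}=v_{x,z}=v_{y,z}=\omega$, so $f$ is one of the generating simple tensors of ${\mathcal E}^{S^2}_V(n+1)$ and thus $\hat f=0$ by Definition \ref{defla}.

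For the inductive step I would use not merely the ``all three equal'' generators of ${\mathcal E}^{S^2}_V(n+1)$ but their polarizations. The key algebraic observation is that, because $char(k)\neq 2,3$, for any triangle $\tau=\{x,y,z\}$ and any vectors $p,q\in V$ the symmetrized tensor $p\otimes p\otimes q+p\otimes q\otimes p+q\otimes p\otimes p$ placed on the three edges of $\tau$ (with all remaining edges held at their values in $f$) lies in ${\mathcal E}^{S^2}_V(n+1)$: by the usual polarization identity it is a linear combination of cubes $c\otimes c\otimes c$, and each such cube is exactly a monochromatic-triangle generator. Note that no reduction to a basis is needed, since the argument works directly with the given vectors.

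Now let $C=x_1x_2\cdots x_mx_1$ be a shortest cycle of length $m\geq 4$. Since $m\geq 4$, the pair $\{x_1,x_3\}$ is not an edge of $G$ (otherwise $x_1x_2x_3$ would be a shorter cycle), so $a:=v_{x_1,x_3}\neq\omega$. Applying the observation to $\tau=\{x_1,x_2,x_3\}$ with $p=\omega$ and $q=a$ gives $f+f_B+f_C\in{\mathcal E}^{S^2}_V(n+1)$, where $f_B$ is obtained from $f$ by setting $v_{x_1,x_2}=a$ and $v_{x_1,x_3}=\omega$, and $f_C$ by setting $v_{x_2,x_3}=a$ and $v_{x_1,x_3}=\omega$ (all other entries unchanged). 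Each of $f_B$ and $f_C$ has the same number of $\omega$-labelled edges as $f$ (one $\omega$ is deleted and one is created), hence still at least $n$; and each contains the $\omega$-cycle $x_1x_3x_4\cdots x_mx_1$ of length $m-1$. Thus their shortest cycles have length at most $m-1$, so by the inductive hypothesis $\hat f_B=\hat f_C=0$, whence $\hat f=0$.

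The step I expect to be the crux is this inductive step, and specifically the realization that the naive strategy of trying to exhibit a monochromatic triangle in $f$ directly must fail whenever the $\omega$-edges form a triangle-free subgraph (the smallest instance being a $4$-cycle in $K_4$). One is forced instead to use the symmetric-tensor relations to ``rotate'' the non-$\omega$ label $a$ into the cycle and thereby shorten it, trading $f$ for two tensors with strictly shorter shortest cycles. This is also precisely the point at which the hypothesis $char(k)\neq 2,3$ enters, through the polarization that expresses $p\otimes p\otimes q+p\otimes q\otimes p+q\otimes p\otimes p$ in terms of cubes.
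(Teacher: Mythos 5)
Your proof is correct. A remark first: this paper does not actually prove Lemma \ref{lemma2} --- it is recalled from \cite{sta2} --- but the same mechanism you use appears verbatim in the paper's proof of Lemma \ref{generators}: there, a cycle $(a_1,\dots,a_n)$ of length $n>3$ in one color class is shortened by applying the two-color triangle relation (Figure \ref{3triangles}) to the face $(a_1,a_2,a_3)$, whose edge $(a_1,a_3)$ necessarily carries a different color, producing two tensors with $(n-1)$-cycles that vanish by induction. Your inductive step is exactly this cycle-reduction, so in substance you have reconstructed the paper's own technique.

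The one genuine addition you make, and it is the right one, is the polarization step. The paper's triangle relations (\ref{equ1})--(\ref{equ2}) and Figure \ref{3triangles} are stated for basis vectors, which suffices in Lemma \ref{generators} because partitions correspond to tensors with basis-vector entries. In Lemma \ref{lemma2} the entries are arbitrary vectors and $\omega$ need not be a basis vector, so you correctly observe that the symmetrized tensor $p\otimes p\otimes q+p\otimes q\otimes p+q\otimes p\otimes p$ placed on a face lies in ${\mathcal E}^{S^2}_V(n+1)$ for \emph{any} $p,q\in V$, by writing it as a combination of cubes $c\otimes c\otimes c$ (e.g. $2S_1=(p+q)^{\otimes 3}-(p-q)^{\otimes 3}-2q^{\otimes 3}$ on the face, other entries fixed), each of which is a generator of the ideal. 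This lets the argument run without expanding in a basis. Minor quibble: that particular polarization only uses $char(k)\neq 2$; the hypothesis $char(k)\neq 3$ is needed elsewhere in the paper (e.g. Lemma \ref{relemma1}), not at this step. This does not affect correctness.
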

We have the following refinement of Lemma 3.4 from \cite{sta2}.
\begin{lemma} Let $\{e_1,...,e_d\}$ be a basis for $V_d$. If the $char(k)$ is not $2$ or $3$, then the ideal ${\mathcal E}^{S^2}_{V_d}$ is generated (as an operad ideal) by any of the following sets: \\ \label{relemma1}
1) $\{ \begin{pmatrix}
1& v&v\\
&1&v\\
\otimes & &1
\end{pmatrix}\vert {\rm ~ for ~ all} ~v\in V\}$.\\
2) $\{ \begin{pmatrix}
1& e_i&e_j\\
&1&e_k\\
\otimes& &1
\end{pmatrix}+\begin{pmatrix}
1& e_i&e_k\\
&1&e_j\\
\otimes& &1
\end{pmatrix}+\begin{pmatrix}
1& e_j&e_i\\
&1&e_k\\
\otimes& &1
\end{pmatrix}+\begin{pmatrix}
1& e_k&e_i\\
&1&e_j\\
 \otimes& &1
\end{pmatrix}+\begin{pmatrix}
1& e_j&e_k\\
&1&e_i\\
\otimes& &1
\end{pmatrix}+\begin{pmatrix}
1& e_k&e_j\\
&1&e_i\\
\otimes& &1
\end{pmatrix} \vert \\{\rm ~ for ~ all} ~1\leq i\leq j\leq k\leq d\}$.\\
\end{lemma}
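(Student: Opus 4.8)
The plan is to reduce the whole statement to a single linear-algebra computation in the arity-$4$ component $\mathcal{T}^{S^2}_{V_d}(4)=V_d^{\otimes 3}$, whose three tensor slots correspond to the positions $(1,2),(1,3),(2,3)$. Write $D(v)=v\otimes v\otimes v$ for the generator appearing in $1)$, and write $S(e_i,e_j,e_k)$ for the element in $2)$, i.e. the sum of the six placements of $e_i,e_j,e_k$ on those three positions. By definition (and by Lemma~3.4 of \cite{sta2}) the ideal $\mathcal{E}^{S^2}_{V_d}$ is the operad ideal generated by the arity-$4$ elements $\{D(v):v\in V_d\}$, which is exactly assertion $1)$. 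The first thing I would record is that the same insert-and-fill operation that produces $\mathcal{E}^{S^2}_{V_d}(n+1)$ from $\{D(v)\}$ (place a relation on a triple of positions $(i,j),(i,k),(j,k)$ and fill the remaining entries with arbitrary vectors) is linear in the inserted element; hence the operad ideal generated by any set $G$ of arity-$4$ elements depends only on the linear subspace $\operatorname{span}(G)\subseteq V_d^{\otimes 3}$. For $G=\{D(v)\}$ this recovers precisely the explicit description of $\mathcal{E}^{S^2}_{V_d}(n+1)$. Consequently it suffices to prove the single equality of subspaces
\[
\operatorname{span}\{D(v):v\in V_d\}=\operatorname{span}\{S(e_i,e_j,e_k):1\le i\le j\le k\le d\}
\]
inside $V_d^{\otimes 3}$.

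Both spans sit inside the symmetric cube $\operatorname{Sym}^3 V_d\subseteq V_d^{\otimes 3}$, since every $D(v)$ and every $S(e_i,e_j,e_k)$ is a symmetric tensor, and I would show each span is in fact all of $\operatorname{Sym}^3 V_d$. Recall that $\operatorname{Sym}^3 V_d$ has the monomial-symmetrization basis indexed by $1\le i\le j\le k\le d$. For $2)$ the claim is immediate once one checks that $S(e_i,e_j,e_k)$ is a \emph{nonzero} scalar multiple of the corresponding basis tensor, the scalar being $1$ for distinct indices, $2$ when exactly two coincide, and $6$ when all three coincide. For $1)$ I would invoke the standard polarization fact that over a field of characteristic $0$ or $>3$ the cubes $\{v^{\otimes 3}\}$ span $\operatorname{Sym}^3 V_d$; concretely this is witnessed by the inclusion--exclusion identity
\[
S(e_i,e_j,e_k)=D(e_i+e_j+e_k)-D(e_i+e_j)-D(e_i+e_k)-D(e_j+e_k)+D(e_i)+D(e_j)+D(e_k)
\]
for distinct $i,j,k$ (with the evident degenerations when indices agree), and in the other direction by expanding $v=\sum_i\lambda_ie_i$ to obtain $v^{\otimes 3}$ as a combination of the $S$'s with coefficients $\tfrac16,\tfrac12,1$ according to whether the underlying multiset of indices has three, two, or one distinct element.

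Putting the two inclusions together gives $\operatorname{span}\,1)=\operatorname{Sym}^3 V_d=\operatorname{span}\,2)$, and the reduction from the first paragraph then upgrades this to equality of the generated operad ideals, both equal to $\mathcal{E}^{S^2}_{V_d}$. The only place the hypothesis $\operatorname{char}(k)\neq 2,3$ is used --- and the main (if modest) obstacle --- is exactly in these identifications: the integers $2$ and $6$ must be invertible so that $S(e_i,e_i,e_k)$ and $S(e_i,e_i,e_i)$ remain nonzero multiples of basis tensors (guaranteeing that $2)$ spans all of $\operatorname{Sym}^3V_d$) and so that the coefficients $\tfrac12,\tfrac16$ in the expansion of $v^{\otimes 3}$ are defined; in characteristic $3$ the cubes span only a proper subspace of $\operatorname{Sym}^3V_d$ and the argument genuinely fails. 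I expect the remaining work to be pure bookkeeping: verifying the multiplicities $1,2,6$ and the inclusion--exclusion identity by grouping simple tensors according to their underlying multiset $\{i,j,k\}$, and stating carefully how the arity-$4$ span propagates through operad composition to all arities.
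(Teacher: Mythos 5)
Your proposal is correct and takes essentially the same route as the paper: the paper's entire proof of this lemma is the one-line remark that it ``follows from the linearity of $\Lambda^{S^2}_{V_d}$'', and your span/polarization argument in $V_d^{\otimes 3}$ is precisely the content behind that remark, worked out in full detail (including the correct identification of where $char(k)\neq 2,3$ is needed). The only blemish is a harmless transposition: in the expansion of $v^{\otimes 3}$ the coefficients $1,\tfrac12,\tfrac16$ attach to multisets with three, two, and one distinct index respectively --- you listed them in the reverse order --- but this does not affect the validity of the argument.
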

\begin{proof} It follows from the linearity of ${\Lambda}^{S^2}_{V_d}$.
\end{proof}
A direct consequence of Lemma \ref{relemma1} is the following explicit description for ${\mathcal E}^{S^2}_{V_d}$. Note that in order to avoid the language of GSC operads one can  use Remark \ref{remba} as the definition of ${\mathcal E}^{S^2}_{V_d} (n+1)$
\begin{remark} Let $\{e_1,...,e_d\}$ be a basis for $V_d$, then ${\mathcal E}^{S^2}_{V_d}(n+1)$ is linearly generated (as a vector space) by elements described in equations (\ref{equ0}), (\ref{equ1}) and (\ref{equ2}),
where each tensor matrix is of type $n\times n$, the $*$ can be any element in $V_d$ (we still have $1$'s on the diagonal of course), and the boxed vectors $\boxed{e_l}$ are in the position $(x,y)$, $(x,z)$ and $(y,z)$  where $1\leq x<y<z\leq n$. Finally, one should notice that because of the linearity of the tensor product we may assume that the $\ast$ entries in equations (\ref{equ0}), (\ref{equ1}), and (\ref{equ2}) are actually elements of the basis  $\{e_1,...,e_d\}$.
\label{remba}
\begin{eqnarray}\begin{pmatrix}
1&*&*&*&*&*\\
&*&\boxed{e_i}&*&\boxed{e_i}&*\\
&&*&*&*&*\\
&&&*&\boxed{e_i}&*\\
&&&&*&*\\
\otimes&&&&&1
\end{pmatrix}\label{equ0}
\end{eqnarray}
for all $1\leq i\leq d$,
\begin{eqnarray}\begin{pmatrix}
1&*&*&*&*&*\\
&*&\boxed{e_i}&*&\boxed{e_i}&*\\
&&*&*&*&*\\
&&&*&\boxed{e_j}&*\\
&&&&*&*\\
\otimes&&&&&1
\end{pmatrix}+
\begin{pmatrix}
1&*&*&*&*&*\\
&*&\boxed{e_i}&*&\boxed{e_j}&*\\
&&*&*&*&*\\
&&&*&\boxed{e_i}&*\\
&&&&*&*\\
\otimes&&&&&1
\end{pmatrix}+\begin{pmatrix}
1&*&*&*&*&*\\
&*&\boxed{e_j}&*&\boxed{e_i}&*\\
&&*&*&*&*\\
&&&*&\boxed{e_i}&*\\
&&&&*&*\\
\otimes&&&&&1
\end{pmatrix}\label{equ1}
\end{eqnarray}
for all $1\leq i<j\leq d$, and
\begin{eqnarray}\begin{pmatrix}
1&*&*&*&*&*\\
&*&\boxed{e_i}&*&\boxed{e_j}&*\\
&&*&*&*&*\\
&&&*&\boxed{e_k}&*\\
&&&&*&*\\
\otimes&&&&&1
\end{pmatrix}+
\begin{pmatrix}
1&*&*&*&*&*\\
&*&\boxed{e_i}&*&\boxed{e_k}&*\\
&&*&*&*&*\\
&&&*&\boxed{e_j}&*\\
&&&&*&*\\
\otimes&&&&&1
\end{pmatrix}+\begin{pmatrix}
1&*&*&*&*&*\\
&*&\boxed{e_j}&*&\boxed{e_i}&*\\
&&*&*&*&*\\
&&&*&\boxed{e_k}&*\\
&&&&*&*\\
\otimes&&&&&1
\end{pmatrix}\nonumber\\
\label{equ2} \\
+\begin{pmatrix}
1&*&*&*&*&*\\
&*&\boxed{e_k}&*&\boxed{e_i}&*\\
&&*&*&*&*\\
&&&*&\boxed{e_j}&*\\
&&&&*&*\\
\otimes&&&&&1
\end{pmatrix}+
\begin{pmatrix}
1&*&*&*&*&*\\
&*&\boxed{e_j}&*&\boxed{e_k}&*\\
&&*&*&*&*\\
&&&*&\boxed{e_i}&*\\
&&&&*&*\\
\otimes&&&&&1
\end{pmatrix}+\begin{pmatrix}
1&*&*&*&*&*\\
&*&\boxed{e_k}&*&\boxed{e_j}&*\\
&&*&*&*&*\\
&&&*&\boxed{e_i}&*\\
&&&&*&*\\
\otimes&&&&&1
\end{pmatrix}\nonumber
\end{eqnarray}
for all $1\leq i<j<k\leq d$.
\end{remark}

\subsection{Edge-partition of graphs}
In this section we discuss a few results about edge partitions of the complete graph $K_m$. 

We start with a few conventions  and general results. In this paper $\Gamma$ is  an undirected, simple graph (i.e. no loops and no edges with multiplicity), $V(\Gamma)$ is the set of vertices, and $E(\Gamma)$ is the set of edges of $\Gamma$. Since our graphs are undirected we will make no distinction between the edge $(u,v)$ and the edge $(v,u)$. We say that $\Gamma$ has an
$s$-cycle if we can find a collection of distinct vertices  $v_1$, $v_2$, ..., $v_s\in V(\Gamma)$ such that $(v_1,v_2)$, $(v_2,v_3)$, ..., $(v_{s-1},v_s)$, $(v_s,v_1)\in E(\Gamma)$.

\begin{definition} Let $\Gamma$ be a graph and $k\geq 2$ be a natural number. A $k$-partition of $\Gamma$ is an ordered collection $(\Gamma_1,\Gamma_2,...,\Gamma_k)$ of sub-graphs $\Gamma_i$ of  $\Gamma$ such that:\\
1) $V(\Gamma_i)=V(\Gamma)$ for all $1\leq i \leq k$,  \\
2) $E(\Gamma_i)\cap E(\Gamma_j)=\emptyset$ for all $i\neq j$, \\
3) $\cup_{i=1}^nE(\Gamma_i)=E(\Gamma)$. \\
We say that the partition $(\Gamma_1,\Gamma_2,...,\Gamma_k)$ in homogeneous if $|vert(\Gamma_i)\vert=|vert(\Gamma_j)\vert$ for all $1\leq i<j\leq k$. We say that the partition $(\Gamma_1,\Gamma_2,...,\Gamma_k)$  is cycle-free if each $\Gamma_i$ is cycle-free.
\end{definition}

\begin{example} 1) Consider the complete graph $K_4$. Take $V(\Gamma_1)=V(\Gamma_2)=\{1,2,3,4\}$, $E(\Gamma_1)=\{(1,2),(1,4),(2,3)\}$, and $E(\Gamma_2)=\{(1,3),(2,4),(3,4)\}$, then $(\Gamma_1, \Gamma_2)$ is a homogeneous,
cycle-free $2$-partition for $K_4$ (see  Figure \ref{fig1}). \\
2) Take $V(\Gamma_1')=V(\Gamma_2')=\{1,2,3,4\}$, $E(\Gamma_1')=\{(1,2),(1,3),(1,4)\}$, and $E(\Gamma_2')=\{(2,3),(2,4),(3,4)\}$, then $(\Gamma_1', \Gamma_2')$ is a homogeneous $2$-partition for $K_4$. But it is not cycle-free because $(2,3,4)$ is a cycle in $\Gamma_2'$ (see Figure \ref{fig2}). \\
3) Take $V(\Gamma_1'')=V(\Gamma_2'')=\{1,2,3,4\}$, $E(\Gamma_1'')=\{(1,2),(1,3)\}$, and $E(\Gamma_2')=\{(2,3),(1,4),(2,4),(3,4)\}$, then $(\Gamma_1'', \Gamma_2'')$ is a  $2$-partition for $K_4$ but is not homogeneous, nor  cycle-free (see Figure \ref{fig3}).  \label{expar}
\end{example}

\begin{figure}[h]
\centering
\begin{tikzpicture}
  [scale=1.5,auto=left,every node/.style={shape = circle, draw, fill = white,minimum size = 1pt, inner sep=0.3pt}]
  \node (n1) at (0,0) {1};
  \node (n2) at (0,1)  {2};
  \node (n3) at (1,1)  {3};
  \node (n4) at (1,0)  {4};
  \foreach \from/\to in {n1/n2,n2/n3,n1/n4}
    \draw[line width=0.5mm,red]  (\from) -- (\to);	
\end{tikzpicture} \hspace{20mm}
\begin{tikzpicture}
  [scale=1.5,auto=left,every node/.style={shape = circle, draw, fill = white,minimum size = 1pt, inner sep=0.3pt}]
  \node (n1) at (0,0) {1};
  \node (n2) at (0,1)  {2};
  \node (n3) at (1,1)  {3};
  \node (n4) at (1,0)  {4};
  \foreach \from/\to in {n1/n3,n2/n4,n3/n4}
    \draw[line width=0.5mm,blue]  (\from) -- (\to);	
\end{tikzpicture}
\caption{$\Gamma(f)=(\Gamma_1,\Gamma_2)$ a cycle-free, homogeneous $2$-partition for $K_4$} \label{fig1}
\end{figure}
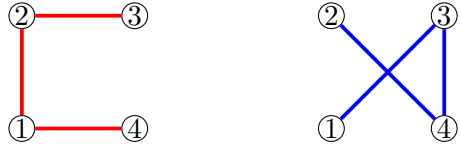

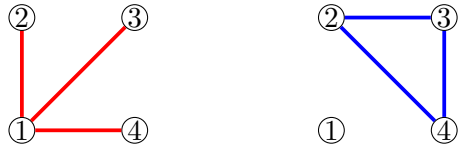
\begin{figure}[h]
\centering
\begin{tikzpicture}
  [scale=1.5,auto=left,every node/.style={shape = circle, draw, fill = white,minimum size = 1pt, inner sep=0.3pt}]
  \node (n1) at (0,0) {1};
  \node (n2) at (0,1)  {2};
  \node (n3) at (1,1)  {3};
  \node (n4) at (1,0)  {4};
  \foreach \from/\to in {n1/n2,n1/n3,n1/n4}
    \draw[line width=0.5mm,red]  (\from) -- (\to);	
\end{tikzpicture} \hspace{20mm}
\begin{tikzpicture}
  [scale=1.5,auto=left,every node/.style={shape = circle, draw, fill = white,minimum size = 1pt, inner sep=0.3pt}]
  \node (n1) at (0,0) {1};
  \node (n2) at (0,1)  {2};
  \node (n3) at (1,1)  {3};
  \node (n4) at (1,0)  {4};
  \foreach \from/\to in {n2/n3,n2/n4,n3/n4}
    \draw[line width=0.5mm,blue]  (\from) -- (\to);	
\end{tikzpicture}
\caption{$\Gamma(g)=(\Gamma_1',\Gamma_2')$ a homogeneous $2$-partition for $K_4$ that is not cycle free} \label{fig2}
\end{figure}

\begin{figure}[h]
\centering
\begin{tikzpicture}
  [scale=1.5,auto=left,every node/.style={shape = circle, draw, fill = white,minimum size = 1pt, inner sep=0.3pt}]
  \node (n1) at (0,0) {1};
  \node (n2) at (0,1)  {2};
  \node (n3) at (1,1)  {3};
  \node (n4) at (1,0)  {4};
  \foreach \from/\to in {n1/n2,n1/n3}
    \draw[line width=0.5mm,red]  (\from) -- (\to);	
\end{tikzpicture} \hspace{20mm}
\begin{tikzpicture}
  [scale=1.5,auto=left,every node/.style={shape = circle, draw, fill = white,minimum size = 1pt, inner sep=0.3pt}]
  \node (n1) at (0,0) {1};
  \node (n2) at (0,1)  {2};
  \node (n3) at (1,1)  {3};
  \node (n4) at (1,0)  {4};
  \foreach \from/\to in {n1/n4, n2/n3,n2/n4,n3/n4}
    \draw[line width=0.5mm,blue]  (\from) -- (\to);	
\end{tikzpicture}
\caption{$\Gamma(h)=(\Gamma_1'',\Gamma_2'')$ a $2$-partition for $K_4$ that is not homogeneous, nor cycle free} \label{fig3}
\end{figure}
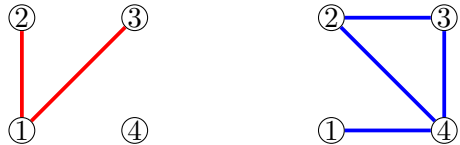

The following simple results will be used in this paper. They can be found in standard books of graph theory like \cite{h}, or be easily proved.  
\begin{lemma} 1) If $\Gamma$ is a simple graph that has no cycle, $\vert E(\Gamma)\vert =n-1$, and $\vert V(\Gamma)\vert =n$, then $\Gamma$ is connected. \\
2) If $\Gamma$ is a simple graph such that $\vert E(\Gamma)\vert=n$ and $\vert V(\Gamma)\vert =n$, then $\Gamma$ has a cycle.
\label{lemmagraph}
\end{lemma}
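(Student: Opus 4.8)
The plan is to reduce both statements to the single quantitative fact that a cycle-free graph (a forest) on $n$ vertices with $c$ connected components has exactly $n-c$ edges. First I would establish the connected case: a connected cycle-free graph (a tree) on $k$ vertices has exactly $k-1$ edges. I would prove this by induction on $k$, the base case $k=1$ being trivial. For the inductive step the crucial observation is that a finite tree with $k\geq 2$ vertices has a vertex of degree $1$ (a leaf). To see this I would take a path of maximal length in the tree: an endpoint of such a path can have no neighbor off the path (that would extend it, contradicting maximality) and no neighbor on the path other than its immediate successor (that would close a cycle, contradicting acyclicity), so each endpoint has degree $1$. Deleting such a leaf together with its unique incident edge yields a graph on $k-1$ vertices that is still connected and cycle-free, hence by the inductive hypothesis has $k-2$ edges; restoring the leaf gives $k-1$ edges.

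Next I would pass to an arbitrary forest. Decomposing $\Gamma$ into its connected components $T_1,\dots,T_c$, each $T_i$ is connected and cycle-free, hence a tree. If $T_i$ has $k_i$ vertices, then by the previous paragraph it has $k_i-1$ edges, and since the components partition the vertex set we have $\sum_{i=1}^c k_i = n$. Summing the edge counts gives $\abs{E(\Gamma)} = \sum_{i=1}^c (k_i-1) = n-c$, which is the desired formula.

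With this formula in hand both parts follow at once. For part $(1)$, if $\Gamma$ is cycle-free with $\abs{V(\Gamma)}=n$ and $\abs{E(\Gamma)}=n-1$, then $n-c = n-1$ forces $c=1$, so $\Gamma$ is connected. For part $(2)$ I would argue by contraposition: if $\Gamma$ had $\abs{V(\Gamma)}=\abs{E(\Gamma)}=n$ but contained no cycle, it would be a forest, and hence would have $n-c \leq n-1$ edges since $c\geq 1$, contradicting $\abs{E(\Gamma)}=n$; therefore $\Gamma$ must contain a cycle.

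Both statements are entirely standard, so there is no genuine obstacle here; the only step demanding a small amount of care is the existence of a leaf in a finite tree, since this is precisely what allows the inductive edge-count to proceed. Everything else is bookkeeping with the component decomposition.
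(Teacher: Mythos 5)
Your proof is correct. Note that the paper itself offers no proof of this lemma: it simply remarks that both statements are standard and ``can be found in standard books of graph theory like \cite{h}, or be easily proved.'' So there is no paper argument to compare against; your write-up supplies exactly the standard argument the authors had in mind. The reduction to the edge-count formula $\abs{E(\Gamma)} = n - c$ for a forest with $c$ components is the canonical route, and you handle the one point that genuinely requires care --- the existence of a leaf in a finite tree, via a maximal path --- correctly, including both reasons an endpoint can have no further neighbors (extension versus cycle creation). The deletion step is also sound, since a degree-one vertex cannot be an interior vertex of any path, so removing it preserves connectivity. Both parts then follow from the formula by pure arithmetic, as you say.
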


\begin{lemma} The only cycle free graphs with $6$ vertices and $5$ edges are the graphs $I_6$, $Y_6$, $E_6$, $H_6$, $C_6$ and $S_6$ from  Figure \ref{fig0}.
\label{lemma5E}
\end{lemma}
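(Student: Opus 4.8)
The plan is to reduce the statement to the classification of trees on six vertices and then carry out that classification by a short, exhaustive case analysis on the maximum degree. First I would note that any cycle-free graph $\Gamma$ with $\lvert V(\Gamma)\rvert = 6$ and $\lvert E(\Gamma)\rvert = 5$ is connected: this is precisely Lemma \ref{lemmagraph}(1) with $n=6$. A connected cycle-free graph is a tree, so the lemma is equivalent to the assertion that, up to isomorphism, there are exactly six trees on six vertices, namely the graphs of Figure \ref{fig0}. Since a tree on six vertices satisfies $\sum_{v\in V(\Gamma)}\deg(v)=2\lvert E(\Gamma)\rvert=10$ and is connected with more than one vertex, its maximum degree $\Delta$ satisfies $2\le \Delta\le 5$ (the lower bound because the degree sum $10$ exceeds $6$), and I would organize the argument according to the value of $\Delta$.

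Several cases are completely forced. If $\Delta=5$ the degree-$5$ vertex is joined to all others, giving the star ($S_6$); if $\Delta=2$ every vertex has degree at most $2$, so $\Gamma$ is a path ($I_6$); and if $\Delta=4$, the degree-$4$ vertex $v$ is adjacent to four of the remaining five vertices, the sixth vertex cannot be adjacent to $v$ (that would force $\deg(v)=5$), so the single remaining edge attaches it to a neighbor of $v$, producing a unique tree. For $\Delta=3$ I would first observe that there are at most two vertices of degree $3$, since three such vertices would already contribute $9$ to the degree sum and leave too little for the remaining three vertices; and if there are exactly two, a short counting argument (each needs three neighbors, but there are only four leaves available) shows they must be adjacent, with the other four vertices as leaves, giving the double star of degree sequence $(3,3,1,1,1,1)$ ($H_6$).

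The only case requiring genuine care is $\Delta=3$ with a single vertex $c$ of degree $3$, where the degree sequence is forced to be $(3,2,2,1,1,1)$. Here $\Gamma$ is three paths (legs) emanating from $c$ whose lengths partition the five remaining vertices as either $(3,1,1)$ or $(2,2,1)$, and these two partitions yield two non-isomorphic trees. I would distinguish them by whether the two degree-$2$ vertices are adjacent (they are in the $(3,1,1)$ case and are not in the $(2,2,1)$ case), equivalently by the number of leaves adjacent to $c$. Collecting the cases gives exactly $1+1+1+(1+2)=6$ trees, matching Figure \ref{fig0}. I expect the main obstacle to be purely organizational, namely verifying that the enumeration is exhaustive and that no two listed graphs coincide; the latter is immediate from the four distinct degree sequences $(5,1,1,1,1,1)$, $(4,2,1,1,1,1)$, $(3,3,1,1,1,1)$, $(2,2,2,2,1,1)$, together with the adjacency invariant separating the two trees that share the sequence $(3,2,2,1,1,1)$.
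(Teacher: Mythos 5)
Your proposal is correct, but there is nothing in the paper to compare it against: the paper states Lemma \ref{lemma5E} with no proof at all, treating it (together with Lemma \ref{lemmagraph}, under the remark that such facts ``can be found in standard books of graph theory like \cite{h}, or be easily proved'') as the standard enumeration of the six isomorphism types of trees on six vertices. Your argument supplies exactly the missing elementary proof: the reduction via Lemma \ref{lemmagraph}(1) from ``cycle-free with $6$ vertices and $5$ edges'' to ``tree on $6$ vertices'' is the right first step, and the case analysis on the maximum degree $\Delta\in\{2,3,4,5\}$ is complete and sound. In particular, the counting that rules out three vertices of degree $3$, the forced adjacency of the two degree-$3$ vertices in the $(3,3,1,1,1,1)$ case, and the split of the $(3,2,2,1,1,1)$ case into the two spiders with leg-vertex partitions $(3,1,1)$ and $(2,2,1)$ (distinguished by whether the two degree-$2$ vertices are adjacent) account precisely for $Y_6$ and $E_6$, matching the paper's $S_6$, $C_6$, $H_6$, $I_6$, $Y_6$, $E_6$. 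The only step you state without justification is that a tree with a unique vertex $c$ of degree $3$ consists of three paths emanating from $c$; this follows by noting that deleting $c$ leaves three components in which every vertex has degree at most $2$, hence paths, each attached to $c$ at an endpoint. With that one sentence added, your write-up is a complete, self-contained proof of a fact the paper only cites.
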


\begin{figure}[h]
\centering

\begin{tikzpicture}
  [scale=0.8,auto=left]
  \node[shape=circle,draw=black,minimum size = 5pt,inner sep=0.3pt] (n1) at (0,0) {};
  \node[shape=circle,draw=black,minimum size = 5pt,inner sep=0.3pt] (n2) at (1,0)  {};
  \node[shape=circle,draw=black,minimum size = 5pt,inner sep=0.3pt] (n3) at (2,0)  {};
  \node[shape=circle,draw=black,minimum size = 5pt,inner sep=0.3pt] (n4) at (3,0)  {};
	\node[shape=circle,draw=black,minimum size = 5pt,inner sep=0.3pt] (n5) at (4,0)   {};
	\node[shape=circle,draw=black,minimum size = 5pt,inner sep=0.3pt] (n6) at (5,0)   {};
	\node[shape=circle,minimum size = 14pt,inner sep=0.3pt] (n24) at (0,1) {$I_6$};
  \foreach \from/\to in {n1/n2,n2/n3,n3/n4,n4/n5,n5/n6}
    \draw[line width=0.5mm]  (\from) -- (\to);	
		
  \node[shape=circle,draw=black,minimum size = 5pt,inner sep=0.3pt]  (n11) at (6,0) {};
  \node[shape=circle,draw=black,minimum size = 5pt,inner sep=0.3pt]  (n21) at (7,0)  {};
  \node[shape=circle,draw=black,minimum size = 5pt,inner sep=0.3pt]  (n31) at (8,0)  {};
  \node[shape=circle,draw=black,minimum size = 5pt,inner sep=0.3pt]  (n41) at (9,0)  {};
	\node[shape=circle,draw=black,minimum size = 5pt,inner sep=0.3pt]  (n51) at (9.72,-0.72)   {};
	\node[shape=circle,draw=black,minimum size = 5pt,inner sep=0.3pt]  (n61) at (9.72,0.72)   {};
	\node[shape=circle,minimum size = 14pt,inner sep=0.3pt] (n24) at (6,1) {$Y_6$};
  \foreach \from/\to in {n11/n21,n21/n31,n31/n41,n41/n51,n41/n61}
    \draw[line width=0.5mm]  (\from) -- (\to);	
  \node[shape=circle,draw=black,minimum size = 5pt,inner sep=0.3pt]  (n12) at (11,0) {};
  \node[shape=circle,draw=black,minimum size = 5pt,inner sep=0.3pt]  (n22) at (12,0)  {};
  \node[shape=circle,draw=black,minimum size = 5pt,inner sep=0.3pt]  (n32) at (13,0)  {};
  \node[shape=circle,draw=black,minimum size = 5pt,inner sep=0.3pt]  (n42) at (14,0)  {};
	\node[shape=circle,draw=black,minimum size = 5pt,inner sep=0.3pt]  (n52) at (15,0)   {};
	\node[shape=circle,draw=black,minimum size = 5pt,inner sep=0.3pt]  (n62) at (13,1)   {};
	\node[shape=circle,minimum size = 14pt,inner sep=0.3pt] (n24) at (11,1) {$E_6$};
  \foreach \from/\to in {n12/n22,n22/n32,n32/n42,n42/n52,n32/n62}
    \draw[line width=0.5mm]  (\from) -- (\to);	

  \node[shape=circle,draw=black,minimum size = 5pt,inner sep=0.3pt] (n13) at (1.7,-3) {};
  \node[shape=circle,draw=black,minimum size = 5pt,inner sep=0.3pt] (n23) at (2.7,-3)  {};
  \node[shape=circle,draw=black,minimum size = 5pt,inner sep=0.3pt] (n33) at (0.97,-2.27)  {};
  \node[shape=circle,draw=black,minimum size = 5pt,inner sep=0.3pt] (n43) at (0.97,-3.72)  {};
	\node[shape=circle,draw=black,minimum size = 5pt,inner sep=0.3pt] (n53) at (3.42,-2.27)   {};
	\node[shape=circle,draw=black,minimum size = 5pt,inner sep=0.3pt] (n63) at (3.42,-3.72)   {};
	\node[shape=circle,minimum size = 14pt,inner sep=0.3pt] (n243) at (0,-2) {$H_6$};
  \foreach \from/\to in {n13/n23,n13/n33,n13/n43,n23/n53,n23/n63}
    \draw[line width=0.5mm]  (\from) -- (\to);	
  \node[shape=circle,draw=black,minimum size = 5pt,inner sep=0.3pt] (n14) at (6.3,-3) {};
  \node[shape=circle,draw=black,minimum size = 5pt,inner sep=0.3pt] (n24) at (7.3,-3)  {};
  \node[shape=circle,draw=black,minimum size = 5pt,inner sep=0.3pt] (n34) at (8.3,-3)  {};
  \node[shape=circle,draw=black,minimum size = 5pt,inner sep=0.3pt] (n44) at (9.3,-3)  {};
	\node[shape=circle,draw=black,minimum size = 5pt,inner sep=0.3pt] (n54) at (8.3,-4)   {};
	\node[shape=circle,draw=black,minimum size = 5pt,inner sep=0.3pt] (n64) at (8.3,-2)   {};
	\node[shape=circle,minimum size = 14pt,inner sep=0.3pt] (n244) at (6,-2) {$C_6$};
  \foreach \from/\to in {n14/n24,n24/n34,n34/n44,n34/n54,n34/n64}
    \draw[line width=0.5mm]  (\from) -- (\to);	
  \node[shape=circle,draw=black,minimum size = 5pt,inner sep=0.3pt]  (n16) at (13,-3) {};
  \node[shape=circle,draw=black,minimum size = 5pt,inner sep=0.3pt]  (n26) at (14,-3)  {};
  \node[shape=circle,draw=black,minimum size = 5pt,inner sep=0.3pt]  (n36) at (13.31,-2.05)  {};
  \node[shape=circle,draw=black,minimum size = 5pt,inner sep=0.3pt]  (n46) at (12.2,-2.41)  {};
	\node[shape=circle,draw=black,minimum size = 5pt,inner sep=0.3pt]  (n56) at (12.2,-3.59)   {};
	\node[shape=circle,draw=black,minimum size = 5pt,inner sep=0.3pt]  (n66) at (13.31,-3.95)   {};
	\node[shape=circle,minimum size = 14pt,inner sep=0.3pt] (n67) at (11,-2) {$S_6$};
  \foreach \from/\to in {n16/n26,n16/n36,n16/n46,n16/n56,n16/n66}
    \draw[line width=0.5mm]  (\from) -- (\to);	
\end{tikzpicture}

\caption{Cycle-free graphs with $6$ vertices $I_6$, $Y_6$, $E_6$, $H_6$, $C_6$ and $S_6$} \label{fig0}
\end{figure}
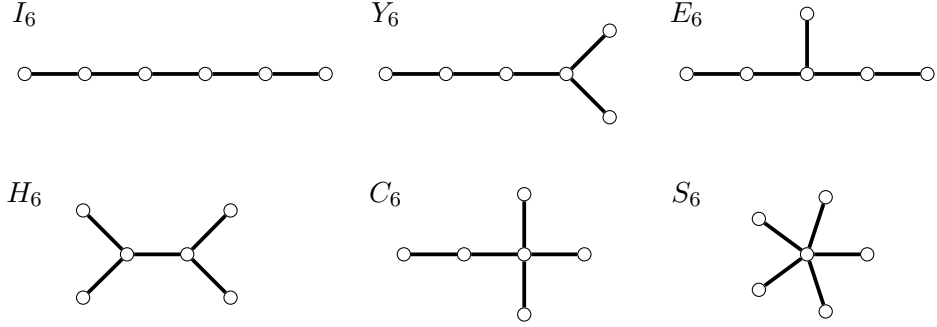


\section{Group Actions on $\Lambda^{S^2}_V(2d+1)$}

In this section, we  define two group actions of $S_d$ and $S_{2d}$ on $\Lambda^{S^2}_V(2d+1)$. We also introduce an element $E_d\in {\Lambda}^{S^2}_{V_d}(2d+1)$ that will play an important role in the rest of the paper.

The element $E_d\in {\Lambda}^{S^2}_{V_d}(2d-1)$ is defined inductively as follows.
$$E_1=\begin{pmatrix}
	1& e_1\\
	\wedge&1
\end{pmatrix}\in {\Lambda}^{S^2}_{V_1}(3),$$
$$E_2=\begin{pmatrix}
	1& e_1&e_2&e_1\\
	&1&e_1&e_2\\
	& &1&e_2\\
	\wedge&&&1
\end{pmatrix}=\begin{pmatrix}
	\begin{pmatrix}
		1& e_1\\
		\wedge&1
	\end{pmatrix}& \begin{pmatrix}
		e_2& e_1\\
		e_1&e_2
	\end{pmatrix}\\
	\wedge& \begin{pmatrix}
		1& e_2\\
		\wedge&1
	\end{pmatrix}
\end{pmatrix} \in {\Lambda}^{S^2}_{V_2}(5),$$
$$E_3=\begin{pmatrix}
	1& e_1&e_2&e_1&e_3&e_1\\
	&1&e_1&e_2&e_1&e_3\\
	& &1&e_2&e_3&e_2\\
	&&&1&e_2&e_3\\
	&&&&1&e_3\\
	\wedge&&&&&1
\end{pmatrix}
=\begin{pmatrix}
	\begin{pmatrix}
		1& e_1\\
		\wedge&1
	\end{pmatrix}& \begin{pmatrix}
		e_2& e_1\\
		e_1&e_2
	\end{pmatrix}&\begin{pmatrix}
		e_3& e_1\\
		e_1&e_3
	\end{pmatrix}\\
	& \begin{pmatrix}
		1& e_2\\
		\wedge&1
	\end{pmatrix}&\begin{pmatrix}
		e_3& e_2\\
		e_2&e_3
	\end{pmatrix}\\
\wedge	& &\begin{pmatrix}
		1& e_3\\
		 \wedge &1
	\end{pmatrix}
\end{pmatrix}
\in {\Lambda}^{S^2}_{V_3}(7),$$
and in general,
$$E_d=\begin{pmatrix}
	E_{d-1}& A_{d}\\
	\wedge& \begin{pmatrix}
		1& e_d\\
		\wedge&1
	\end{pmatrix}
\end{pmatrix}\in {\Lambda}^{S^2}_{V_d}(2d+1),$$
where
$$A_{d}=\otimes \begin{pmatrix}
	e_d&e_1\\
	e_1&e_d\\
	e_d&e_2\\
	e_2&e_d\\
	\vdots&\vdots\\
	e_d&e_{d-1}\\
	e_{d-1}&e_d
\end{pmatrix}\in {\mathcal B}_{V_d}(2d-1,3).$$

Recall that the construction of $\Lambda^{S^2}_V$ is functorial  so, in particular, we have an action $GL(V_d)\times\Lambda^{S^2}_{V_d}(2d+1)\to  \Lambda^{S^2}_{V_d}(2d+1)$ given by:
$$T*\begin{pmatrix}
	1& v_{1,2}&...&v_{1,2d-1}&v_{1,2d}\\
	& 1&...&v_{2,2d-1}&v_{2,2d}\\
	& &\ddots&.&.\\
	& & &1&v_{2d-1,2d}\\
	\wedge& & & &1
\end{pmatrix}=\begin{pmatrix}
	1& T(v_{1,2})&...&T(v_{1,2d-1})&T(v_{1,2d})\\
	& 1&...&T(v_{2,2d-1})&T(v_{2,2d})\\
	& &\ddots&.&.\\
	& & &1&T(v_{2d-1,2d})\\
	\wedge& & & &1
\end{pmatrix}.$$
Moreover, once we pick a basis in $V_d$, the symmetric group $S_d$ is a subgroup of $GL(V_d)$ with the action on $V_d$ determined by $\tau(e_i)=e_{\tau^{-1}(i)}$.
\begin{lemma}
	\label{GLLem}
Let $T\in End_k(V_d)$ and $dim(V_d)=d$, then $T*E_d=(det(T))^{2d-1}E_d$.
In particular, if $\tau\in S_d$, then $\tau*E_d=(sign(\tau))^{2d-1}E_d$.
\end{lemma}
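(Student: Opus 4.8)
The plan is to reduce the whole statement to the single vanishing criterion of Lemma \ref{lemma2} together with the functoriality of the action, which supplies $(AB)\ast x = A\ast(B\ast x)$ and $\mathrm{id}\ast x = x$ for the linear action applied entrywise (the formula makes sense for every $T\in End_k(V_d)$, not just $GL(V_d)$). The one structural fact I would extract first is a bookkeeping count: by an immediate induction on the recursive definition of $E_d$, each basis vector $e_l$ occurs exactly $2d-1$ times among the $\binom{2d}{2}=d(2d-1)$ entries of $E_d$. Indeed, passing from $E_{d-1}$ to $E_d$, every old color $e_l$ ($l<d$) gains exactly two edges from the block $A_d$, while $e_d$ occupies the $2d-2$ entries of $A_d$ colored $e_d$ together with the edge $(2d-1,2d)$. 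This occurrence count is the only property of $E_d$ that the argument will use.

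Next I would verify the formula on the two elementary families of maps that generate $GL(V_d)$. For a diagonal map $D$ sending $e_l\mapsto\lambda_l e_l$, applying $D$ entrywise multiplies the simple tensor by $\prod_l\lambda_l^{2d-1}$, since each $e_l$ appears $2d-1$ times; hence $D\ast E_d=(\prod_l\lambda_l)^{2d-1}E_d=det(D)^{2d-1}E_d$. The crucial case is a transvection $S$ with $e_b\mapsto e_b+\lambda e_a$ ($a\neq b$) and $e_j\mapsto e_j$ otherwise. Expanding $S\ast E_d$ by multilinearity over the $2d-1$ entries colored $e_b$, every summand in which at least one of these entries is replaced by $e_a$ makes $e_a$ occur at least $2d$ times, so it vanishes by Lemma \ref{lemma2}; only the untouched summand survives, giving $S\ast E_d=E_d=det(S)^{2d-1}E_d$. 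This shear computation is where Lemma \ref{lemma2} does all the work, and it is the step I expect to be the conceptual heart of the proof.

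I would then propagate the identity multiplicatively. Since for each generator $g$ we have shown $g\ast E_d=det(g)^{2d-1}E_d$, a scalar multiple of $E_d$, functoriality gives for any product $T=g_1\cdots g_r$ that $T\ast E_d=g_1\ast(\cdots\ast(g_r\ast E_d))=\big(\prod_i det(g_i)\big)^{2d-1}E_d=det(T)^{2d-1}E_d$, using that $det$ is multiplicative. Because diagonal maps and transvections generate $GL(V_d)$, this establishes the formula for all invertible $T$. Taking $T$ to be the permutation matrix of $\tau\in S_d$, for which $det(T)=sign(\tau)$, yields the asserted special case $\tau\ast E_d=sign(\tau)^{2d-1}E_d$ immediately.

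Finally, for non-invertible $T$ I must show $T\ast E_d=0$, matching $det(T)^{2d-1}=0$. Here the vectors $T(e_1),\dots,T(e_d)$ are linearly dependent, so $\sum_i\mu_i T(e_i)=0$ for some scalars with, say, $\mu_d\neq 0$; letting $R\in GL(V_d)$ be the invertible map with $R(e_d)=\sum_i\mu_i e_i$ and $R(e_j)=e_j$ for $j\neq d$, the composite $TR$ annihilates $e_d$. Since $e_d$ labels at least one entry of $E_d$, applying $TR$ entrywise inserts a zero factor into the simple tensor, so $(TR)\ast E_d=0$, and therefore $T\ast E_d=(TR)\ast(R^{-1}\ast E_d)=det(R^{-1})^{2d-1}\,(TR)\ast E_d=0$ by the invertible case. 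The only genuine obstacle in this scheme is the shear step, which is settled cleanly by the occurrence count together with Lemma \ref{lemma2}; everything else is a formal consequence of functoriality and the multiplicativity of the determinant.
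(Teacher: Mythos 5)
Your proposal is correct and follows essentially the same route as the paper: reduce to diagonal maps and transvections, use the count that each $e_l$ occurs exactly $2d-1$ times among the entries of $E_d$, and kill every unwanted term in the multilinear expansion of the transvection case via Lemma \ref{lemma2}, so that only $E_d$ itself survives. The lone structural difference is your separate argument for singular $T$ (composing with an invertible $R$ that sends a basis vector into $\ker T$); the paper subsumes that case by allowing $\lambda=0$ in its one-coordinate diagonal maps, since its decomposition into elementary and diagonal matrices already covers all of $End_k(V_d)$.
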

\begin{proof}
Every element $T\in End_k(V_d)$ can be written as a product of elementary and diagonal matrices. So, it suffices to show that the statement holds for all elementary and diagonal matrices.

First, suppose $T$ is a diagonal matrix. So there exist an $1\leq i\leq d$ and $\lambda\in k$ such that
	\[T(e_s)=\begin{cases}
		\lambda e_i \ \ \ \ s=i\\
		e_s \ \ \ \ \ \ s\neq i.
	\end{cases}\]
	
Because of the linearity of $\exter{V_d}{d}$, we have
\begin{eqnarray*}
T*E_d&=&\begin{pmatrix}
	1& e_1&...&\lambda e_i&e_1&...&e_d&e_1\\
	&    1&...&e_1&\lambda e_i&...&e_1&e_d\\
	&      &...&.&.&...&.&.\\
	&      & &1&\lambda e_i&...&e_d&\lambda e_i\\
	&      & & &1&...&\lambda e_i&e_d\\
		&      & & & &...&.&.\\
				&      & & && &1&e_d\\
	\wedge &      & & && & &1
\end{pmatrix}\\
&=&\lambda^{2d-1}E_d \\
&=&det(T)^{2d-1}E_d.
\end{eqnarray*}

Next, assume that  $T$ is an elementary matrix, so there exist $i\neq j$ and $\lambda\in k$ such that
	\[T(e_s)=\begin{cases}
		e_s \ \ \ \ \ \ \ \ \ \ \ s\neq i \\
		e_i+\lambda e_j \ \ \ \ s=i.
	\end{cases}\]
We have \begin{eqnarray*}
T*E_d&=&\begin{pmatrix}
	1& e_1&...&e_i+\lambda e_j&e_1&...&e_d&e_1\\
	&    1&...&e_1&e_i+\lambda e_j&...&e_1&e_d\\
	&      &...&.&.&...&.&.\\
	&      & &1&e_i+\lambda e_j&...&e_d&e_i+\lambda e_j\\
	&      & & &1&...&e_i+\lambda e_j&e_d\\
		&      & & & &...&.&.\\
				&      & & && &1&e_d\\
	\wedge &      & & && & &1
\end{pmatrix}.
\end{eqnarray*}

By multi-linearity of the tensor product, it follows that $T*E_d$ is equal to a sum of $2^d$ elements of $\exter{V}{n}$ with entries in $\{e_1,e_2,...,e_d\}$. All these terms have at least $2d-1$ entries equal to $e_j$ (because the $e_j$ is not affected by $T$). Most of the terms will get another entry equal to  $\lambda e_j$ in some position $(k,l)$, hence by linearity and Lemma \ref{lemma2} they will be zero. The only term that will survive is the one that has the entry $e_i$ on positions $(2s-1,2i-1)$ and $(2s,2i)$ if $1\leq s\leq i-1$, $(2i-1,2i)$, and on the positions $(2i-1,2s)$ and $(2i,2s-1)$ if $i+1\leq s\leq d$. This element is equal to $E_d$, which means that
	\[T*E_d=E_d=(det(T))^{2d-1}E_d,\]  because $det(T)=1$.
\end{proof}

\begin{lemma}\label{actions2n} Let $V_d$ be a vector space of dimension $d$.  There is  group action $S_{2d}\times\exter{V}{d}\to\exter{V}{d}$ given by permuting the indices of columns. More precisely, if  $\sigma\in S_{2d}$  we define
\begin{eqnarray}
\sigma\rightharpoonup \begin{pmatrix}
	1& v_{1,2}&...&v_{1,2d-1}&v_{1,2d}\\
	& 1&...&v_{2,2d-1}&v_{2,2d}\\
	& &\ddots&.&.\\
	& & &1&v_{2d-1,2d}\\
	\wedge& & & &1
\end{pmatrix}=\begin{pmatrix}
	1& w_{1,2}&...&w_{1,2d-1}&w_{1,2d}\\
	& 1&...&w_{2,2d-1}&w_{2,2d}\\
	& &\ddots&.&.\\
	& & &1&w_{2d-1,2d}\\
	\wedge& & & &1
\end{pmatrix}
\label{defa}
\end{eqnarray}

where $w_{i,j}=v_{\sigma^{-1}(i),\sigma^{-1}(j)}$, up to the identification that $w_{k,l}=w_{l,k}$ for $k>l$. Moreover, we have
	$$\sigma\rightharpoonup E_d=(sign(\sigma))^{d-1}E_d.$$
\end{lemma}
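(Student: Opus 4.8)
The plan is to treat the two assertions separately: first that $\rightharpoonup$ is a well-defined action of $S_{2d}$ on the quotient, and then the eigenvalue formula for $E_d$.

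For well-definedness, I would observe that the prescription $w_{i,j}=v_{\sigma^{-1}(i),\sigma^{-1}(j)}$ is nothing but the permutation of tensor factors of $\mathcal{T}^{S^2}_{V_d}(2d+1)=V_d^{\otimes\binom{2d}{2}}$ induced by the action of $S_{2d}$ on the set of unordered pairs $\{i,j\}$, i.e. on the edge set of $K_{2d}$. This is manifestly a linear left action (the use of $\sigma^{-1}$ is exactly what makes $(\sigma\tau)\rightharpoonup=\sigma\rightharpoonup(\tau\rightharpoonup-)$ hold). To see that it descends to $\Lambda^{S^2}_{V_d}(2d+1)$ I would check that it preserves $\mathcal{E}^{S^2}_{V_d}(2d+1)$. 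Using the explicit generators from Remark \ref{remba}, each generator of type (\ref{equ0}), (\ref{equ1}) or (\ref{equ2}) is supported on a single triangle $\{x,y,z\}$; applying $\sigma$ produces the generator of the same type, with the same boxed basis vectors, supported on $\{\sigma(x),\sigma(y),\sigma(z)\}$ (the remaining starred entries being arbitrary, they may be permuted freely). Hence $\sigma\rightharpoonup$ maps $\mathcal{E}^{S^2}_{V_d}(2d+1)$ into itself, and being invertible, onto itself, so $\rightharpoonup$ is a well-defined $S_{2d}$-action on $\Lambda^{S^2}_{V_d}(2d+1)$.

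For the eigenvalue formula, the key structural point is that, since $\rightharpoonup$ is a group action, it suffices to verify $\tau\rightharpoonup E_d=(-1)^{d-1}E_d$ for the transpositions in a generating set of $S_{2d}$; then for $\sigma=\tau_1\cdots\tau_m$ one gets $\sigma\rightharpoonup E_d=(-1)^{m(d-1)}E_d=\big((-1)^m\big)^{d-1}E_d=(\mathrm{sign}(\sigma))^{d-1}E_d$, since $\mathrm{sign}(\sigma)=(-1)^m$ is independent of the factorization. I would use the generating set consisting of the within-pair transpositions $(2s-1,2s)$ together with the cross transpositions $(2s-1,2t-1)$ for $s<t$: conjugating $(2s-1,2t-1)$ by within-pair transpositions produces every transposition between distinct pairs, and the within-pair transpositions supply the rest, so these do generate $S_{2d}$.

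The real content is the computation for one transposition, which I would organize around the block description of $E_d$: the pair $\{2s-1,2s\}$ carries $e_s$ on the diagonal block $\begin{pmatrix}1&e_s\\ \wedge&1\end{pmatrix}$, while the off-diagonal block indexed by pairs $a<b$ is $\begin{pmatrix}e_b&e_a\\ e_a&e_b\end{pmatrix}$. Applying $(2s-1,2s)$ fixes all diagonal blocks and replaces each of the $d-1$ off-diagonal blocks meeting pair $s$ by its row- or column-flip $\begin{pmatrix}e_a&e_b\\ e_b&e_a\end{pmatrix}$. I would isolate the crucial local claim that flipping one off-diagonal block of $E_d$ negates its class in $\Lambda^{S^2}_{V_d}(2d+1)$; iterating over the $d-1$ affected blocks then gives $(2s-1,2s)\rightharpoonup E_d=(-1)^{d-1}E_d$, and the cross transpositions fall to the same block analysis. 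To establish the local claim I would feed the four changed entries into the symmetrized relations (\ref{equ1}) and (\ref{equ2}) along the triangles through the two swapped vertices, discarding by Lemma \ref{lemma2} every term in which some $e_i$ is forced to occur at least $2d$ times; this is sharp, since each $e_i$ occurs in $E_d$ exactly $2d-1$ times, one below the threshold.

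The main obstacle is precisely this single-transposition computation. Unlike the elementary-matrix argument of Lemma \ref{GLLem}, a block flip is not localized to a single triangle, so rewriting it through the relations generates a cascade of intermediate configurations, and the difficulty is to control this cascade and show that, after the vanishings forced by Lemma \ref{lemma2} and the cancellations enforced by (\ref{equ1})–(\ref{equ2}), the surviving contributions collapse to exactly $-E_d$ for one block, hence to $(-1)^{d-1}E_d$ overall. This is where I expect essentially all of the work to concentrate, and it is the step most naturally reorganized through the graphic/partition calculus developed later in the paper.
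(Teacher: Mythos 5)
Your overall architecture is sound, and for roughly half of the argument it coincides with the paper's proof: the well-definedness argument via the generators of Remark \ref{remba}, the reduction of the sign formula to transpositions in a generating set of $S_{2d}$, and the treatment of the within-pair transpositions $(2s-1,2s)$ by flipping the $d-1$ off-diagonal blocks are all correct. Your ``local claim'' that flipping one off-diagonal block negates the class is precisely the paper's Equation (\ref{act1}), proved there for $d=2$ and then applied inside larger matrices exactly as you describe, each of the $d-1$ affected blocks contributing a factor $-1$.

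The genuine gap is your assertion that ``the cross transpositions fall to the same block analysis.'' They do not. Compute $(1,3)\rightharpoonup E_3$ explicitly: the new entry at the \emph{diagonal} position $(3,4)$ is $w_{3,4}=v_{1,4}=e_1\neq e_2$, and likewise $w_{1,4}=v_{3,4}=e_2$, $w_{1,6}=v_{3,6}=e_2$, $w_{3,6}=v_{1,6}=e_1$. In the partition language of Section 3, $E_3$ is the twin-star partition with centers $\{1,2\},\{3,4\},\{5,6\}$, while $(1,3)$ carries it to the twin-star partition with centers $\{2,3\},\{1,4\},\{5,6\}$; a block flip never moves the centers, so the image of $E_d$ under a cross transposition is not reachable from $E_d$ by any sequence of block flips, and your local claim cannot be iterated to cover this case. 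This is exactly the point at which the paper must perform a second, genuinely different computation: it takes the generating set of adjacent transpositions, handles $(2i-1,2i)$ via Equation (\ref{act1}) as you do, but then establishes $(2i,2i+1)\rightharpoonup E_d=(-1)^{d-1}E_d$ (the displayed computation of $(2,3)\rightharpoonup E_3$) by a fresh chain of applications of relation (\ref{equ1}) to several different faces, with the discarded terms killed by triangle relations and Lemma \ref{lemma2}. Without an argument of this kind for at least one transposition mixing two pairs, your proof only controls the subgroup generated by the within-pair transpositions, which is $(\mathbb{Z}/2)^d$ rather than $S_{2d}$, so the sign formula for general $\sigma$ does not follow.
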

\begin{proof}
The fact that the above formula gives an action is a straightforward computation. We will check the second equality for $d=2$ and $d=3$. The general case is similar and an alternative proof will be given  later in the paper.

Before we start, let's make a few conventions that will help with the flow of the proof. If we use a certain equality then we will refer it above the $\overset{(Eq.)}{=}$ sign. In order to specify on what positions we use that equality, we will box the corresponding vectors $\boxed{v_{i,j}}$. If a certain tensor matrix $A\in \exter{V}{d}$ is zero, then we will box it $\boxed{A}$, and we will highlight the vectors $\colorbox{orange}{$v_{i,j}$}$, $\colorbox{orange}{$v_{i,k}$}$ and $\colorbox{orange}{$v_{j,k}$}$ that make $\boxed{A}=0\in \exter{V}{d}$.

\begin{eqnarray}
(1,2)\rightharpoonup \begin{pmatrix}
	1& e_1&e_2&e_1\\
	&1&e_1&e_2\\
	& &1&e_2\\
	\wedge&&&1
\end{pmatrix}&\overset{(\ref{defa})}{=}&\begin{pmatrix}
	1& e_1&\boxed{e_1}&\boxed{e_2}\\
	&1&e_2&e_1\\
	& &1&\boxed{e_2}\\
	\wedge&&&1
\end{pmatrix} \nonumber \\
&\overset{(\ref{equ1})}{=}&-\begin{pmatrix}
	1& \boxed{e_1}&e_2&\boxed{e_2}\\
	&1&e_2&\boxed{e_1}\\
	& &1&e_1\\
	\wedge&&&1
\end{pmatrix}-\boxed{\begin{pmatrix}
	1&\colorbox{orange}{$e_1$}&e_2&\colorbox{orange}{$e_1$}\\
	&1&e_2&\colorbox{orange}{$e_1$}\\
	& &1&e_2\\
	\wedge&&&1
\end{pmatrix}}\nonumber \\
&\overset{(\ref{equ1})}{=}&\begin{pmatrix}
	1& e_1&e_2&e_1\\
	&1&\boxed{e_2}&\boxed{e_2}\\
	& &1&\boxed{e_1}\\
	\wedge&&&1
\end{pmatrix}
+\boxed{\begin{pmatrix}
	1& \colorbox{orange}{$e_2$}&\colorbox{orange}{$e_2$}&e_1\\
	&1&\colorbox{orange}{$e_2$}&e_1\\
	& &1&e_1\\
	\wedge&&&1
\end{pmatrix}}\nonumber \\
&\overset{(\ref{equ1})}{=}&-\begin{pmatrix}
	1& e_1&e_2&e_1\\
	&1&e_1&e_2\\
	& &1&e_2\\
	\wedge&&&1
\end{pmatrix}-
\boxed{\begin{pmatrix}
	1& \colorbox{orange}{$e_1$}&e_2&\colorbox{orange}{$e_1$}\\
	&1&e_2&\colorbox{orange}{$e_1$}\\
	& &1&e_2\\
	\wedge&&&1
\end{pmatrix}}\nonumber \\
&=&-E_2. \label{act1}
\end{eqnarray}
A similar computation shows that $(2,3)\rightharpoonup E_2=-E_2=(3,4)\rightharpoonup E_2$. Next, consider the case $d=3$. Using Equation \ref{act1} twice, we have
\begin{eqnarray*}
(1,2)\rightharpoonup \begin{pmatrix}
	1& e_1&e_2&e_1&e_3&e_1\\
	&1&e_1&e_2&e_1&e_3\\
	& &1&e_2&e_3&e_2\\
	&&&1&e_2&e_3\\
	&&&&1&e_3\\
	\wedge&&&&&1
\end{pmatrix}&\overset{(\ref{defa})}{=}&
\begin{pmatrix}
	1& \boxed{e_1}&\boxed{e_1}&\boxed{e_2}&e_1&e_3\\
	&1&\boxed{e_2}&\boxed{e_1}&e_3&e_1\\
	& &1&\boxed{e_2}&e_3&e_2\\
	&&&1&e_2&e_3\\
	&&&&1&e_3\\
	\wedge&&&&&1
\end{pmatrix}\\
\end{eqnarray*}
\begin{eqnarray*}
\overset{(\ref{act1})}{=}
-\begin{pmatrix}
	1& \boxed{e_1}&e_2&e_1&\boxed{e_1}&\boxed{e_3}\\
	&1&e_1&e_2&\boxed{e_3}&\boxed{e_1}\\
	& &1&e_2&e_3&e_2\\
	&&&1&e_2&e_3\\
	&&&&1&\boxed{e_3}\\
	\wedge&&&&&1
\end{pmatrix}
\overset{(\ref{act1})}{=}
\begin{pmatrix}
	1&e_1&e_2&e_1&e_3&e_1\\
	&1&e_1&e_2&e_1&e_3\\
	& &1&e_2&e_3&e_2\\
	&&&1&e_2&e_3\\
	&&&&1&e_3\\
	\wedge&&&&&1
\end{pmatrix}=E_3.
\end{eqnarray*}

Similarly, using Equation \ref{act1} for $(d-1)$ times, one can show that for every $d\geq 3$ and any $1\leq i\leq d$ we have
$(2i-1,2i)\rightharpoonup E_d=(-1)^{d-1}E_d$.

Next we have
\begin{eqnarray*}
&&(2,3)\rightharpoonup \begin{pmatrix}
	1& e_1&e_2&e_1&e_3&e_1\\
	&1&e_1&e_2&e_1&e_3\\
	& &1&e_2&e_3&e_2\\
	&&&1&e_2&e_3\\
	&&&&1&e_3\\
	\wedge&&&&&1
\end{pmatrix}\overset{(\ref{defa})}{=}
\begin{pmatrix}
	1& \boxed{e_2}&\boxed{e_1}&e_1&e_3&e_1\\
	&1&\boxed{e_1}&e_2&e_3&e_2\\
	& &1&e_2&e_1&e_3\\
	&&&1&e_2&e_3\\
	&&&&1&e_3\\
	\wedge&&&&&1
\end{pmatrix}\\
\end{eqnarray*}
\begin{eqnarray*}
&&\overset{(\ref{equ1})}{=}
-\begin{pmatrix}
	1& e_1&e_2&e_1&e_3&e_1\\
	&1&\boxed{e_1}&e_2&\boxed{e_3}&e_2\\
	& &1&e_2&\boxed{e_1}&e_3\\
	&&&1&e_2&e_3\\
	&&&&1&e_3\\
	\wedge&&&&&1
\end{pmatrix}-
\boxed{\begin{pmatrix}
	1& e_1&e_1&e_1&e_3&e_1\\
	&1&\colorbox{orange}{$e_2$}&\colorbox{orange}{$e_2$}&e_3&e_2\\
	& &1&\colorbox{orange}{$e_2$}&e_1&e_3\\
	&&&1&e_2&e_3\\
	&&&&1&e_3\\
	\wedge&&&&&1
\end{pmatrix}}\\
\end{eqnarray*}
\begin{eqnarray*}
&&\overset{(\ref{equ1})}{=}
\begin{pmatrix}
	1& e_1&e_2&e_1&e_3&e_1\\
	&1&\boxed{e_3}&e_2&e_1&\boxed{e_2}\\
	& &1&e_2&e_1&\boxed{e_3}\\
	&&&1&e_2&e_3\\
	&&&&1&e_3\\
	\wedge&&&&&1
\end{pmatrix}+
\boxed{\begin{pmatrix}
	1& e_1&e_2&e_1&e_3&e_1\\
	&1&e_1&e_2&e_1&e_2\\
	& &1&e_2&\colorbox{orange}{$e_3$}&\colorbox{orange}{$e_3$}\\
	&&&1&e_2&e_3\\
	&&&&1&\colorbox{orange}{$e_3$}\\
	\wedge&&&&&1
\end{pmatrix}}\\
\end{eqnarray*}
\begin{eqnarray*}
&&\overset{(\ref{equ1})}{=}
-\begin{pmatrix}
	1& e_1&e_2&e_1&e_3&e_1\\
	&1&\boxed{e_3}&e_2&\boxed{e_1}&e_3\\
	& &1&e_2&\boxed{e_1}&e_2\\
	&&&1&e_2&e_3\\
	&&&&1&e_3\\
	\wedge&&&&&1
\end{pmatrix}-
\boxed{\begin{pmatrix}
	1& e_1&e_2&e_1&e_3&e_1\\
	&1&\colorbox{orange}{$e_2$}&\colorbox{orange}{$e_2$}&e_1&e_3\\
	& &1&\colorbox{orange}{$e_2$}&e_1&e_3\\
	&&&1&e_2&e_3\\
	&&&&1&e_3\\
	\wedge&&&&&1
\end{pmatrix}}\\
\end{eqnarray*}
\begin{eqnarray*}
&&\overset{(\ref{equ1})}{=}
\begin{pmatrix}
	1& e_1&e_2&e_1&e_3&e_1\\
	&1&e_1&e_2&e_1&e_3\\
	& &1&e_2&e_3&e_2\\
	&&&1&e_2&e_3\\
	&&&&1&e_3\\
	\wedge&&&&&1
\end{pmatrix}+
\boxed{\begin{pmatrix}
	1& e_1&e_2&e_1&e_3&e_1\\
	&1&e_1&e_2&\colorbox{orange}{$e_3$}&\colorbox{orange}{$e_3$}\\
	& &1&e_2&e_1&e_2\\
	&&&1&e_2&e_3\\
	&&&&1&\colorbox{orange}{$e_3$}\\
	\wedge&&&&&1
\end{pmatrix}}\\
&&=E_3.
\end{eqnarray*}

A similar computation shows that for every $d\geq 3$ and any $1\leq i\leq d-1$, we have
$(2i,2i+1)\rightharpoonup E_d=(-1)^{d-1}E_d$.

\end{proof}

\begin{remark} It is obvious that the two actions of $S_{d}$ and $S_{2d}$ commute when we act on $E_d$. Later in the paper we will show that the same thing is true on $\exter{V_d}{d}$.
\end{remark}

\section{A dictionary between partitions of $K_{n}$ and elements in ${\Lambda}^{S^2}_{V_d}(n+1)$}
In this section we develop a graphic calculus for the partitions  of $K_{n}$ in relation with ${\Lambda}^{S^2}_{V_d}(n+1)$.

\subsection{Set of generators for ${\Lambda}^{S^2}_{V_d}(n+1)$}

Consider $\mathcal{B}_d=\{e_1,e_2,\dots,e_d\}$ an ordered basis for the vector space $V_d$. We have that $$\mathcal{G}_{\mathcal{B}_d}(n+1)=\{\begin{pmatrix}
1& v_{1,2}&...&v_{1,n-1}&v_{1,n}\\
& 1&...&v_{2,n-1}&v_{2,n}\\
& &...&.&.\\
& & &1&v_{n-1,n}\\
\otimes& & & &1
\end{pmatrix}\in V_d^{\otimes \frac{n(n-1)}{2}}\; \vert \; v_{i,j}\in \mathcal{B}_d\},$$ is a system of generators for $V_d^{\otimes \frac{n(n-1)}{2}}$ and, in particular, their images will form a system of generators for ${\Lambda}^{S^2}_{V_d}(n+1)$. Notice that to every element in $\mathcal{G}_{\mathcal{B}_d}(n+1)$ we can associate a $d$-partition of the graph $K_{n}$.
Indeed, if
$$f=\begin{pmatrix}
1& v_{1,2}&...&v_{1,n-1}&v_{1,n}\\
& 1&...&v_{2,n-1}&v_{2,n}\\
& &...&.&.\\
& & &1&v_{n-1,n}\\
\otimes& & & &1
\end{pmatrix}\in\mathcal{G}_{\mathcal{B}_d}(n+1)$$ we consider the  sub-graphs $\Gamma_i(f)$ of $K_{n}$ constructed as follows, for every $1\leq i\leq d$ we take $V(\Gamma_i(f))=\{1,2,\dots,2d\}$ and $E(\Gamma_i(f))=\{(s,t)|v_{s,t}=e_i\}$. One can easily see that $\Gamma(f)=(\Gamma_1(f),\dots,\Gamma_d(f))$ is a $d$-partition of $K_{n}$. Moreover, $f\mapsto \Gamma(f)$  is a bijective map from the set $\mathcal{G}_{\mathcal{B}_d}(n+1)$ to the set of $d$-partitions of $K_{n}$. We will denote by $f_{(\Gamma_1,\dots,\Gamma_d)}$ the element in $\mathcal{G}_{\mathcal{B}_d}(n+1)$ corresponding to the partition $(\Gamma_1,\dots,\Gamma_d)$, and by $\hat{f}_{(\Gamma_1,\dots,\Gamma_d)}$ the image of $f_{(\Gamma_1,\dots,\Gamma_d)}$ in ${\Lambda}^{S^2}_{V_d}(n+1)$.

\begin{example} Let $f=\begin{pmatrix}
	1& e_1&e_2&e_1\\
	&1&e_1&e_2\\
	& &1&e_2\\
	\otimes&&&1
\end{pmatrix}$, $g=\begin{pmatrix}
	1& e_1&e_1&e_1\\
	&1&e_2&e_2\\
	& &1&e_2\\
	\otimes&&&1
\end{pmatrix}$, $h=\begin{pmatrix}
	1& e_1&e_1&e_2\\
	&1&e_2&e_2\\
	& &1&e_2\\
	\otimes&&&1
\end{pmatrix}\in \mathcal{G}_{\mathcal{B}_2}(5).$  Then we have $\Gamma(f)=(\Gamma_1, \Gamma_2)$, $\Gamma(g)=(\Gamma_1', \Gamma_2')$, and $\Gamma(h)=(\Gamma_1'', \Gamma_2'')$ where $(\Gamma_1, \Gamma_2)$,  $(\Gamma_1', \Gamma_2')$ and $(\Gamma_1'', \Gamma_2'')$ are the $2$-partitions from  Example \ref{expar} (see also Figure \ref{fig1}, Figure \ref{fig2} and  Figure \ref{fig3}).
\end{example}

\begin{remark} In what follows, we will be interested in $d$-partitions of the complete graph $K_{2d}$. These will corespond to  elements in $\mathcal{G}_{\mathcal{B}_d}(2d+1)$ and their image in ${\Lambda}^{S^2}_{V_d}(n+1)$ will be a set of generators.
\end{remark}

\begin{remark}
A consequence of Lemma \ref{relemma1} is that  the image in ${\Lambda}^{S^2}_V(n+1)$ of any generator $f\in\mathcal{G}_{\mathcal{B}_d}(n+1)$, with a sub-matrix of the form $\begin{pmatrix}
 1&e_i&e_j\\
 &1&e_k\\
 &&1\\
\end{pmatrix}$ on the positions $(x,y)$, $(x,z)$ and $(y,z)$,  can be expressed in terms of five other generators that have the same entries as $f$, except on the positions $(x,y)$, $(x,z)$ and $(y,z)$ of the sub-matrix, where these are permuted in the five remaining possible ways.

If we denote by $f$, any of the six matrices in Lemma \ref{relemma1}, we have that $\Gamma(f)=(\Gamma_r, \Gamma_o, \Gamma_b)$ is a 3-partition of $K_3$ which can be identified with a triangle as in Figure \ref{triangle}. In Figure \ref{fig4} we have the second condition of the Lemma \ref{relemma1} written in terms of graphs. When we have only two distinct vectors $e_i$ and $e_j$ we get the identity in Figure \ref{3triangles}.

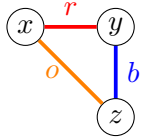
\begin{figure}[h]
\centering
\begin{tikzpicture}
  [scale=1.2,auto=left]
	\node[shape=circle,draw=black,minimum size = 14pt,inner sep=0.3pt] (n1) at (0,1) {$x$};
	\node[shape=circle,draw=black,minimum size = 14pt,inner sep=0.3pt] (n2) at (1,1) {$y$};
	\node[shape=circle,draw=black,minimum size = 14pt,inner sep=0.3pt] (n3) at (1,0) {$z$};
	
			 \draw[line width=0.5mm,red]  (n1) edge[] node [above] {$r$} (n2)  ;
			 \draw[line width=0.5mm,orange]  (n1) edge[] node [left] {$o$} (n3)  ;
			 \draw[line width=0.5mm,blue]  (n2) edge[] node [right] {$b$} (n3)  ;
\end{tikzpicture}
\caption{The face $(x,y,z)$}\label{triangle}
\end{figure}

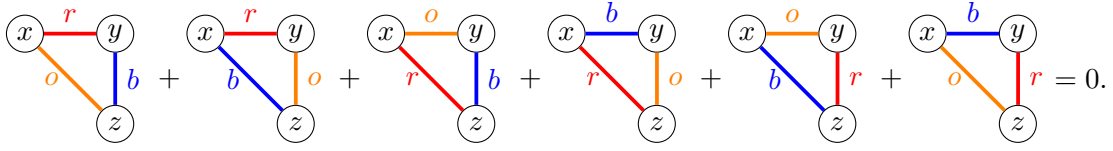
\begin{figure}[ht]
\centering
\begin{tikzpicture}
  [scale=1.2,auto=left]
	\node[shape=circle,draw=black,minimum size = 14pt,inner sep=0.3pt] (n1) at (0,1) {$x$};
	\node[shape=circle,draw=black,minimum size = 14pt,inner sep=0.3pt] (n2) at (1,1) {$y$};
	\node[shape=circle,draw=black,minimum size = 14pt,inner sep=0.3pt] (n3) at (1,0) {$z$};
	\node[shape=circle,minimum size = 14pt,inner sep=0.3pt] (n4) at (1.6,0.5) {+};

			 \draw[line width=0.5mm,red]  (n1) edge[] node [above] {$r$} (n2)  ;
			 \draw[line width=0.5mm,orange]  (n1) edge[] node [left] {$o$} (n3)  ;
			 \draw[line width=0.5mm,blue]  (n2) edge[] node [right] {$b$} (n3)  ;
		
	\node[shape=circle,draw=black,minimum size = 14pt,inner sep=0.3pt] (n5) at (2,1) {$x$};
	\node[shape=circle,draw=black,minimum size = 14pt,inner sep=0.3pt] (n6) at (3,1) {$y$};
	\node[shape=circle,draw=black,minimum size = 14pt,inner sep=0.3pt] (n7) at (3,0) {$z$};
	\node[shape=circle,minimum size = 14pt,inner sep=0.3pt] (n8) at (3.6,0.5) {+};
	
		\draw[line width=0.5mm,red]  (n5) edge[] node [above] {$r$} (n6)  ;
		\draw[line width=0.5mm,blue]  (n5) edge[] node [left] {$b$} (n7);
		\draw[line width=0.5mm,orange]  (n6) edge[] node [right] {$o$} (n7);	
			
	\node[shape=circle,draw=black,minimum size = 14pt,inner sep=0.3pt] (n9) at (4,1) {$x$};
	\node[shape=circle,draw=black,minimum size = 14pt,inner sep=0.3pt] (n10) at (5,1) {$y$};
	\node[shape=circle,draw=black,minimum size = 14pt,inner sep=0.3pt] (n11) at (5,0) {$z$};
	\node[shape=circle,minimum size = 14pt,inner sep=0.3pt] (n12) at (5.6,0.5) {+};
	
		\draw[line width=0.5mm,orange]  (n9) edge[] node [above] {$o$} (n10)  ;
		\draw[line width=0.5mm,red]  (n9) edge[] node [left] {$r$} (n11);
		\draw[line width=0.5mm,blue]  (n10) edge[] node [right] {$b$} (n11);	
			
	\node[shape=circle,draw=black,minimum size = 14pt,inner sep=0.3pt] (n13) at (6,1) {$x$};
	\node[shape=circle,draw=black,minimum size = 14pt,inner sep=0.3pt] (n14) at (7,1) {$y$};
	\node[shape=circle,draw=black,minimum size = 14pt,inner sep=0.3pt] (n15) at (7,0) {$z$};
	\node[shape=circle,minimum size = 14pt,inner sep=0.3pt] (n16) at (7.6,0.5) {+};
	
		\draw[line width=0.5mm,blue]  (n13) edge[] node [above] {$b$} (n14)  ;
		\draw[line width=0.5mm,red]  (n13) edge[] node [left] {$r$} (n15);
		\draw[line width=0.5mm,orange]  (n14) edge[] node [right] {$o$} (n15);			
					
	\node[shape=circle,draw=black,minimum size = 14pt,inner sep=0.3pt] (n17) at (8,1) {$x$};
	\node[shape=circle,draw=black,minimum size = 14pt,inner sep=0.3pt] (n18) at (9,1) {$y$};
	\node[shape=circle,draw=black,minimum size = 14pt,inner sep=0.3pt] (n19) at (9,0) {$z$};
	\node[shape=circle,minimum size = 14pt,inner sep=0.3pt] (n20) at (9.6,0.5) {+};
	
		\draw[line width=0.5mm,orange]  (n17) edge[] node [above] {$o$} (n18)  ;
		\draw[line width=0.5mm,blue]  (n17) edge[] node [left] {$b$} (n19);
		\draw[line width=0.5mm,red]  (n18) edge[] node [right] {$r$} (n19);

	\node[shape=circle,draw=black,minimum size = 14pt,inner sep=0.3pt] (n21) at (10,1) {$x$};
	\node[shape=circle,draw=black,minimum size = 14pt,inner sep=0.3pt] (n22) at (11,1) {$y$};
	\node[shape=circle,draw=black,minimum size = 14pt,inner sep=0.3pt] (n23) at (11,0) {$z$};
	\node[shape=circle,minimum size = 14pt,inner sep=0.3pt] (n24) at (11.7,0.5) {=\;0.};
	
		\draw[line width=0.5mm,blue]  (n21) edge[] node [above] {$b$} (n22)  ;
		\draw[line width=0.5mm,orange]  (n21) edge[] node [left] {$o$} (n23);
		\draw[line width=0.5mm,red]  (n22) edge[] node [right] {$r$} (n23);									
				
\end{tikzpicture}
\caption{Generating relations for the ideal ${\mathcal E}^{S^2}_{V_d}$ } \label{fig4}
\end{figure}

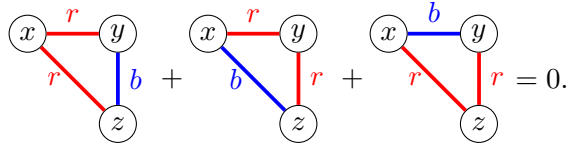
\begin{figure}[ht]
\centering\begin{tikzpicture}
  [scale=1.2,auto=left]
	\node[shape=circle,draw=black,minimum size = 14pt,inner sep=0.3pt] (n1) at (0,1) {$x$};
	\node[shape=circle,draw=black,minimum size = 14pt,inner sep=0.3pt] (n2) at (1,1) {$y$};
	\node[shape=circle,draw=black,minimum size = 14pt,inner sep=0.3pt] (n3) at (1,0) {$z$};
	\node[shape=circle,minimum size = 14pt,inner sep=0.3pt] (n4) at (1.6,0.5) {+};
	
			\draw[line width=0.5mm,red]  (n1) edge[] node [above] {$r$} (n2)  ;
			\draw[line width=0.5mm,red]  (n1) edge[] node [left] {$r$} (n3)  ;
			\draw[line width=0.5mm,blue]  (n2) edge[] node [right] {$b$} (n3)  ;
		
	\node[shape=circle,draw=black,minimum size = 14pt,inner sep=0.3pt] (n5) at (2,1) {$x$};
	\node[shape=circle,draw=black,minimum size = 14pt,inner sep=0.3pt] (n6) at (3,1) {$y$};
	\node[shape=circle,draw=black,minimum size = 14pt,inner sep=0.3pt] (n7) at (3,0) {$z$};
	\node[shape=circle,minimum size = 14pt,inner sep=0.3pt] (n8) at (3.6,0.5) {+};

			\draw[line width=0.5mm,red]  (n5) edge[] node [above] {$r$} (n6)  ;
			\draw[line width=0.5mm,blue]  (n5) edge[] node [left] {$b$} (n7)  ;
			\draw[line width=0.5mm,red]  (n6) edge[] node [right] {$r$} (n7)  ;		
			
	\node[shape=circle,draw=black,minimum size = 14pt,inner sep=0.3pt] (n9) at (4,1) {$x$};
	\node[shape=circle,draw=black,minimum size = 14pt,inner sep=0.3pt] (n10) at (5,1) {$y$};
	\node[shape=circle,draw=black,minimum size = 14pt,inner sep=0.3pt] (n11) at (5,0) {$z$};
	\node[shape=circle,minimum size = 14pt,inner sep=0.3pt] (n24) at (5.7,0.5) {=\;0.};
	
			\draw[line width=0.5mm,blue]  (n9) edge[] node [above] {$b$} (n10)  ;
			\draw[line width=0.5mm,red]  (n9) edge[] node [left] {$r$} (n11)  ;
			\draw[line width=0.5mm,red]  (n10) edge[] node [right] {$r$} (n11)  ;

\end{tikzpicture}
\caption{Case of two components} \label{3triangles}
\end{figure}

\end{remark}

\begin{remark}
One should notice that these equalities extend beyond ${\Lambda}^{S^2}_V(3)$. As Remark \ref{remba} points out, we have similar relations in ${\Lambda}^{S^2}_V(n+1)$ among the elements that are different only on the face $(x,y,z)$.  More precisely, suppose that   $f_{(\Gamma_1^{(k)},\dots,\Gamma_d^{(k)})}\in \mathcal{G}_{\mathcal{B}_d}(n+1)$ for $1\leq k\leq 6$, where $(\Gamma_1^{(k)},\dots,\Gamma_d^{(k)})$ are $d$-partitions of $K_n$ that are equal on all edges except for the face $(x,y,z)$ where they coincide with one of the six possible partitions of the face $K_3$ from Figure \ref{fig4}. Then,
$$\sum_{k=1}^6\hat{f}_{(\Gamma_1^{(k)},\dots,\Gamma_d^{(k)})}=0\in {\Lambda}^{S^2}_V(n+1).$$
Notice that we need to assume $char(k)\neq 2$ and $char(k)\neq 3$  in order to recover relations (\ref{equ0}) and (\ref{equ1}).
\end{remark}

\subsection{Homogeneous cycle-free $d$-partitions of $K_{2d}$}

We will now concentrate on the component of degree $2d+1$. We already know that the set of $d$-partitions of the graph $K_{2d}$ provide a system of generators for ${\Lambda}^{S^2}_V(2d+1)$. In this section we show that we can restrict to $d$-partitions that are homogeneous and cycle-free.

\begin{lemma}\label{generators} The set $\hat{\mathcal{G}}_{\mathcal{B}_d}^{cf}(2d+1)$ is a system on generators for ${\Lambda}^{S^2}_V(2d+1)$, where
$$\hat{\mathcal{G}}_{\mathcal{B}_d}^{cf}(2d+1)=\{\hat{f}_{(\Gamma_1,...,\Gamma_d)}\vert (\Gamma_1,...,\Gamma_d) {\rm ~is~ a ~homogeneous,~cycle}{\text -}{\rm free~}d{\text -}{\rm partition~of~}K_{2d}\}.$$
\end{lemma}
\begin{proof}
We need to show that the only elements from $\mathcal{G}_{\mathcal{B}_d}(2d+1)$ that have a chance of being nonzero in
${\Lambda}^{S^2}_V(2d+1)$ are those corresponding to homogeneous partitions.

Indeed, because the graph $K_{2d}$ has $d(2d-1)$ edges any $d$-partition $\Gamma=(\Gamma_1,  \dots, \Gamma_d)$, which is not homogeneous,  will have a sub-graph $\Gamma_i$ with $2d$ vertices and at least $2d$ edges. By Lemma \ref{lemmagraph} this implies that $\Gamma_i$ will have a cycle.

We can prove by induction that the existence of a cycle for a partition $\Gamma=(\Gamma_1,\dots, \Gamma_d)$ implies that the corresponding element $\hat{f}_{(\Gamma_1, \dots, \Gamma_d)}$ in ${\Lambda}^{S^2}_V(2d+1)$ is zero. If the cycle has length 3, then $f_{(\Gamma_1, \dots, \Gamma_d)}$ is in the ideal $\mathcal{E}_V^{S^2}$, so the assertion is clear.

Assume now that $\Gamma_i$ has a cycle of length $n>3$. We denote this cycle by $(a_1, a_2, \dots, a_n)$ and we represent it with red edges as in Figure \ref{figcycle}.
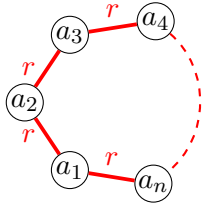
\begin{figure}[h]
\begin{tikzpicture}
  [scale=0.6,auto=left]
  \node[shape=circle,draw=black,minimum size = 14pt,inner sep=0.3pt] (n1) at (0,0)  {$a_1$};
  \node[shape=circle,draw=black,minimum size = 14pt,inner sep=0.3pt] (n2) at (-1,1.5)  {$a_2$};
  \node[shape=circle,draw=black,minimum size = 14pt,inner sep=0.3pt] (n3) at (0,3)  {$a_3$};
  \node[shape=circle,draw=black,minimum size = 14pt,inner sep=0.3pt] (n4) at (1.9,3.3)  {$a_4$};
  \node[shape=circle,draw=black,minimum size = 14pt,inner sep=0.3pt] (n5) at (1.85,-0.3) {$a_n$};

		\draw[line width=0.5mm,red]  (n1) edge[] node [left] {$r$} (n2)  ;	
		\draw[line width=0.5mm,red]  (n2) edge[] node [left] {$r$} (n3)  ;	
		\draw[line width=0.5mm,red]  (n1) edge[] node [above] {$r$} (n5)  ;	
		\draw[line width=0.5mm,red]  (n3) edge[] node [above] {$r$} (n4)  ;	
		
    \path[dashed, thick,red] (n4) edge[bend left=50] node [above] {} (n5);
\end{tikzpicture} \hspace{20mm}
\caption{$\Gamma_i$, a cycle of length $n$} \label{figcycle}
\end{figure}


Since $n>3$, the edge connecting $a_1$ and $a_3$ does not belong to $\Gamma_i$, and we will represent it with blue. Then, using the equation in Figure \ref{3triangles}, we have the identity in Figure \ref{Cycleinduction}.

  \begin{figure}[h]
  \centering
  \begin{tikzpicture}
  [scale=0.6,auto=left]
  \node[shape=circle,draw=black,minimum size = 14pt,inner sep=0.3pt] (n1) at (0,0)  {$a_1$};
  \node[shape=circle,draw=black,minimum size = 14pt,inner sep=0.3pt] (n2) at (-1,1.5)  {$a_2$};
  \node[shape=circle,draw=black,minimum size = 14pt,inner sep=0.3pt](n3) at (0,3)  {$a_3$};
  \node[shape=circle,draw=black,minimum size = 14pt,inner sep=0.3pt] (n4) at (1.9,3.3)  {$a_4$};
  \node[shape=circle,draw=black,minimum size = 14pt,inner sep=0.3pt] (n5) at (1.85,-0.3) {$a_n$};
  \node[minimum size = 14pt,inner sep=0.3pt] (n8) at (3.75,1.5) {=};
  \node[minimum size = 14pt,inner sep=0.3pt] (n21) at (4.75,1.5) {$-$};

		\draw[line width=0.5mm,red]  (n1) edge[] node [left] {$r$} (n2)  ;	
		\draw[line width=0.5mm,red]  (n2) edge[] node [left] {$r$} (n3)  ;	
		\draw[line width=0.5mm,red]  (n1) edge[] node [above] {$r$} (n5)  ;	
		\draw[line width=0.5mm,red]  (n3) edge[] node [above] {$r$} (n4)  ;	
    \draw[line width=0.5mm,blue]  (n1) edge[] node [right] {$b$} (n3)  ;	
   \path[dashed, thick,red] (n4) edge[bend left=50] node [above] {} (n5);

 \node[shape=circle,draw=black,minimum size = 14pt,inner sep=0.3pt] (n6) at (7,0)  {$a_1$};
 \node[shape=circle,draw=black,minimum size = 14pt,inner sep=0.3pt] (n7) at (6,1.5)  {$a_2$};
 \node[shape=circle,draw=black,minimum size = 14pt,inner sep=0.3pt] (n8) at (7,3)  {$a_3$};
 \node[shape=circle,draw=black,minimum size = 14pt,inner sep=0.3pt] (n9) at (8.9,3.3)  {$a_4$};
 \node[shape=circle,draw=black,minimum size = 14pt,inner sep=0.3pt] (n10) at (8.85,-0.3) {$a_n$};
 \node[shape=circle,minimum size = 14pt,inner sep=0.3pt] (n11) at (10.75,1.5) {$-$};

			\draw[line width=0.5mm,red]  (n7) edge[] node [left] {$r$} (n8)  ;	
		\draw[line width=0.5mm,red]  (n6) edge[] node [right] {$r$} (n8)  ;	
		\draw[line width=0.5mm,red]  (n6) edge[] node [above] {$r$} (n10)  ;	
		\draw[line width=0.5mm,red]  (n8) edge[] node [above] {$r$} (n9)  ;	
    \draw[line width=0.5mm,blue]  (n6) edge[] node [left] {$b$} (n7)  ;	
   \path[dashed, thick,red] (n9) edge[bend left=50] node [above] {} (n10);

  \node[shape=circle,draw=black,minimum size = 14pt,inner sep=0.3pt] (n12) at (13,0)  {$a_1$};
  \node[shape=circle,draw=black,minimum size = 14pt,inner sep=0.3pt] (n13) at (12,1.5) {$a_2$};
  \node[shape=circle,draw=black,minimum size = 14pt,inner sep=0.3pt] (n14) at (13,3)  {$a_3$};
  \node[shape=circle,draw=black,minimum size = 14pt,inner sep=0.3pt] (n15) at (14.9,3.3)  {$a_4$};
  \node[shape=circle,draw=black,minimum size = 14pt,inner sep=0.3pt] (n16) at (14.85,-0.3) {$a_n$};

	
			\draw[line width=0.5mm,red]  (n12) edge[] node [left] {$r$} (n13)  ;	
		\draw[line width=0.5mm,red]  (n12) edge[] node [right] {$r$} (n14)  ;	
		\draw[line width=0.5mm,red]  (n14) edge[] node [above] {$r$} (n15)  ;	
		\draw[line width=0.5mm,red]  (n12) edge[] node [above] {$r$} (n16)  ;	
    \draw[line width=0.5mm,blue]  (n13) edge[] node [left] {$b$} (n14)  ;	
   \path[dashed, thick,red] (n15) edge[bend left=50] node [above] {} (n16);

\end{tikzpicture} \hspace{20mm}
\caption{Cycle reduction} \label{Cycleinduction}
\end{figure}
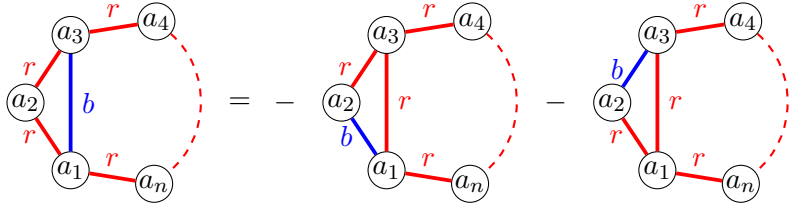

Note that on the right side of the equality in Figure \ref{Cycleinduction}  we have two cycles of length $n-1$ and, by the inductive hypothesis, the corresponding elements for these partitions are zero in  ${\Lambda}^{S^2}_V(2d+1)$. This implies that so is the image of $f_{(\Gamma_1,\dots,\Gamma_d)}$.

\end{proof}

\begin{remark}  Let $(\Gamma_1,...,\Gamma_d)$ be a cycle-free, homogeneous $d$-partition of the graph $K_{2d}$.
Since the partition is homogeneous then each $\Gamma_i$ has exactly $2d-1$ edges. Since the partition is cycle-free from  Lemma \ref{lemmagraph} we get that each $\Gamma_i$ is connected. In particular, if we take three distinct vertices $1\leq x<y<z\leq 2d$, then for each $1\leq i\leq d$ we can find a unique  path in $\Gamma_i$ that contains all three vertices $x$, $y$ and $z$ in some order. So, we can say that in the graph $\Gamma_i$ one of the three vertices is between the other two. \\
\end{remark}

We are ready now to state and prove the following key lemma which will play an essential role in the rest of the paper.
\begin{lemma} Take $(\Gamma_1,\dots, \Gamma_d)$ a cycle-free homogenous $d$-partition of $K_{2d}$, and pick three distinct integers $1\leq x,y,z\leq 2d$. Then, there exist $(\Lambda_1,\dots, \Lambda_d)$, a unique cycle-free homogenous $d$-partition of $K_{2d}$  such that the two partitions $(\Gamma_1,\dots,\Gamma_d)$  and $(\Lambda_1,\dots,\Lambda_d)$ coincide on every edge of $K_{2d}$ except on the face $(x,y,z)$ where they are different on at least two edges. Moreover, with the above notations we have $$\hat{f}_{(\Gamma_1,\dots,\Gamma_d)}=-\hat{f}_{(\Lambda_1,\dots,\Lambda_d)}\in {\Lambda}^{S^2}_{V_d}(2d+1).$$
We will denote $(\Lambda_1,\dots,\Lambda_d)$ by $(\Gamma_1,\dots,\Gamma_d)^{(x,y,z)}$.
\label{keylemma}
\end{lemma}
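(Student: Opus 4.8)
The plan is to reduce the statement to a finite, local computation on the face $\{(x,y),(x,z),(y,z)\}$, exploiting the structural fact recorded just before the lemma: a cycle-free homogeneous $d$-partition of $K_{2d}$ is a collection of $d$ spanning trees $\Gamma_1,\dots,\Gamma_d$, each with exactly $2d-1$ edges (homogeneity gives the count, and Lemma \ref{lemmagraph} upgrades cycle-free to connected). First I would observe that any partition $(\Lambda_1,\dots,\Lambda_d)$ agreeing with $(\Gamma_1,\dots,\Gamma_d)$ off the face and still homogeneous must assign to the three face edges the \emph{same multiset of colors} as $(\Gamma_1,\dots,\Gamma_d)$; otherwise some color class would no longer have $2d-1$ edges. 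Hence the only candidates for $\Lambda$ are recolorings of the face that permute the colors already present there. Since the partition is cycle-free, the face cannot be monochromatic (a $3$-cycle), so exactly two cases occur: the three face edges carry three distinct colors, or two colors (one repeated). These are precisely the situations governed by relations (\ref{equ2}) (six terms) and (\ref{equ1}) (three terms) of Remark \ref{remba}, each of which is a sum of face-recolorings equal to $0$ in ${\Lambda}^{S^2}_{V_d}(2d+1)$.

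The heart of the argument is then to show that among these $6$ (resp.\ $3$) homogeneous recolorings, exactly two are cycle-free: the original and one other. I would encode cycle-freeness as a Kruskal-type condition. For each color $\ell$ present on the face, let $S_\ell$ be $\Gamma_\ell$ with its face edges deleted; since $\Gamma_\ell$ is a tree, $S_\ell$ is a forest whose components split $\{x,y,z\}$ according to the betweenness of $x,y,z$ in $\Gamma_\ell$, i.e.\ the middle vertex of the unique connecting path (this is well defined because a present face edge joins two of the three vertices directly, forcing the minimal connecting subtree to be a path rather than a $Y$). A recoloring is cycle-free iff, for every color $\ell$, the face edges reassigned to $\ell$ connect distinct components of $S_\ell$. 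In the two-color case the constraint coming from the doubled color is automatic (any two of the three face edges join the three relevant components acyclically), so cycle-freeness reduces to a single constraint on where the lone minority edge is placed, and a short check yields exactly one valid placement besides the original. In the three-color case each color $\ell$ forbids exactly one face edge, namely the edge joining its middle vertex to the third vertex, so the valid recolorings are the perfect matchings of $K_{3,3}$ with three prescribed edges removed; enumerating the $2^3$ possible betweenness-patterns shows this count is always exactly $2$.

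Once this count is in place the conclusion is immediate: in the relevant relation from Remark \ref{remba} every non-cycle-free term vanishes in ${\Lambda}^{S^2}_{V_d}(2d+1)$ by (the proof of) Lemma \ref{generators}, and the two surviving terms appear with coefficient $+1$, so $\hat{f}_{(\Gamma_1,\dots,\Gamma_d)}+\hat{f}_{(\Lambda_1,\dots,\Lambda_d)}=0$, giving $\hat{f}_{(\Gamma_1,\dots,\Gamma_d)}=-\hat{f}_{(\Lambda_1,\dots,\Lambda_d)}$. Uniqueness of $\Lambda$ and the fact that it differs from $\Gamma$ on at least two face edges both fall out of the matching (resp.\ placement) analysis, since a nontrivial multiset-preserving recoloring necessarily moves at least two edges. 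The hard part will be the cycle-free count in the three-color case: ruling out the a priori possibilities of zero or of two extra survivors requires tracking how the three trees' betweenness relations interact, and this is where the enumeration of the eight forbidden-edge patterns does the real work. The hypotheses $\mathrm{char}(k)\neq 2,3$ enter only through Lemma \ref{relemma1}, which is what guarantees relations (\ref{equ1}) and (\ref{equ2}) hold with unit coefficients.
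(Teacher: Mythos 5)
Your proposal is correct and follows essentially the same route as the paper: both reduce the problem to recolorings of the face that preserve the multiset of colors (forced by homogeneity), use the component structure of each tree with its face edges deleted to show exactly two such recolorings are cycle-free (your ``forbidden edge'' determined by the middle vertex is precisely the pair joined by the paper's wiggly arcs in its cases (I)--(III) and Figure \ref{fig8}), and then conclude from the unit-coefficient relations (\ref{equ1}), (\ref{equ2}) of Remark \ref{remba} together with the vanishing of all terms containing a cycle. Your derangement-style count of the eight betweenness patterns is just a more systematic bookkeeping of the paper's pictorial case analysis, not a different argument.
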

\begin{proof}  First notice that the three edges of the face $(x,y,z)$ cannot belong to the same graph
$\Gamma_i$, otherwise $\Gamma_i$ would have cycle.

Next, suppose that the edges of the face $(x,y,z)$  belong to the three distinct graphs $\Gamma_r$, $\Gamma_o$ and $\Gamma_b$ (i.e. have colors  red, orange and blue).  We remove the three edges $(x,y)$, $(x,z)$ and $(y,z)$.   On the face $(x,y,z)$ we must have of one of the three situations described in  Figure \ref{fig5}. The edges that we removed are now dashed, and the wiggling arcs (red, orange, and blue), represent paths that connect  vertices $x$, $y$, and $z$ in $\Gamma_r$, $\Gamma_o$ and $\Gamma_b$ respectively. We can obviously ignore case (III) since in that situation no mater what color we assign to the  edge $(x,y)$ the partition $(\Gamma_1,\dots,\Gamma_d)$ will not be cycle free.
\begin{figure}[h]
\centering
\begin{tikzpicture}
  [scale=2,auto=left]
	\node[shape=circle,draw=black,minimum size = 14pt,inner sep=0.3pt] (n1) at (0,1) {$x$};
	\node[shape=circle,draw=black,minimum size = 14pt,inner sep=0.3pt] (n2) at (1,1) {$y$};
	\node[shape=circle,draw=black,minimum size = 14pt,inner sep=0.3pt] (n3) at (1,0) {$z$};
	\node[shape=circle,minimum size = 14pt,inner sep=0.3pt] (m4) at (-0.5,0) {(I)};

		\path[line width=0.5mm,red] (n1) edge[bend left=120,snake it] node [above] {$\;\;r$} (n2);
		\path[line width=0.5mm,blue] (n1) edge[bend left=45,snake it] node [below] {$b$} (n2);
		\path[line width=0.5mm,orange] (n1) edge[bend right=60,snake it] node [above] {$o$} (n3);
	  \draw[line width=0.5mm,dashed]  (n1) -- (n2)  ;
		\draw[line width=0.5mm,dashed]  (n1) -- (n3);
		\draw[line width=0.5mm,dashed]  (n2) -- (n3);	
		
	\node[shape=circle,draw=black,minimum size = 14pt,inner sep=0.3pt] (n4) at (3,1) {$x$};
	\node[shape=circle,draw=black,minimum size = 14pt,inner sep=0.3pt] (n5) at (4,1) {$y$};
	\node[shape=circle,draw=black,minimum size = 14pt,inner sep=0.3pt] (n6) at (4,0) {$z$};
	\node[shape=circle,minimum size = 14pt,inner sep=0.3pt] (m4) at (2.5,0) {(II)};

		\path[line width=0.5mm,red] (n4) edge[bend left=120,snake it] node [above] {$\;\;r$} (n5);
		\path[line width=0.5mm,orange] (n4) edge[bend right=60,snake it] node [above] {$o$} (n6);
		\path[line width=0.5mm,blue] (n5) edge[bend left=60,snake it] node [left] {$b$} (n6);
	  \draw[line width=0.5mm,dashed]  (n4) -- (n5)  ;
		\draw[line width=0.5mm,dashed]  (n4) -- (n6);
		\draw[line width=0.5mm,dashed]  (n5) -- (n6);

  \node[shape=circle,draw=black,minimum size = 14pt,inner sep=0.3pt] (n7) at (6,1) {$x$};
	\node[shape=circle,draw=black,minimum size = 14pt,inner sep=0.3pt] (n8) at (7,1) {$y$};
	\node[shape=circle,draw=black,minimum size = 14pt,inner sep=0.3pt] (n9) at (7,0) {$z$};
		\node[shape=circle,minimum size = 14pt,inner sep=0.3pt] (m4) at (5.5,0) {(III)};

		\path[line width=0.5mm,red] (n7) edge[bend left=120,snake it] node [above] {$\;\;r$} (n8);
		\path[line width=0.5mm,orange] (n7) edge[bend right=30,snake it] node [below] {$o$} (n8);
		\path[line width=0.5mm,blue] (n7) edge[bend left=45,snake it] node [below] {$b$} (n8);
	  \draw[line width=0.5mm,dashed]  (n7) -- (n8)  ;
		\draw[line width=0.5mm,dashed]  (n7) -- (n9);
		\draw[line width=0.5mm,dashed]  (n8) -- (n9);	

\end{tikzpicture}

\caption{Paths among vertices $x$, $y$, and $z$} \label{fig5}
\end{figure}
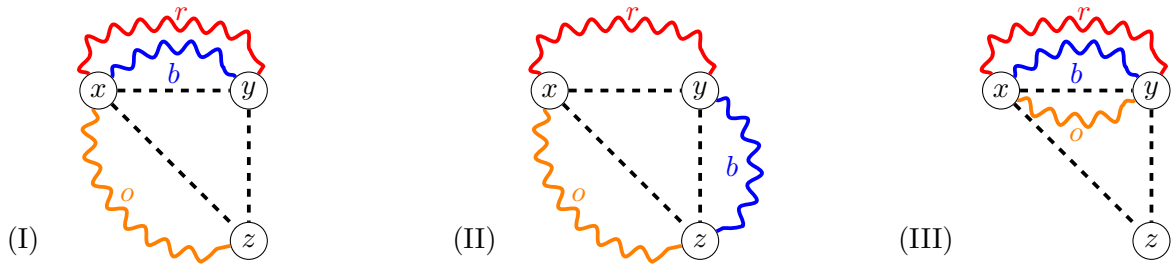

In the other two cases the only possible colorings that will give a cycle free homogeneous partition are the ones in Figure \ref{fig6} and Figure \ref{fig7}. Indeed, let's look to case (I), since there is are red and blue paths between $x$ and $y$, it means that the edge $(x,y)$ must be orange. The other two edges $(x,z)$ and $(y,z)$ must be red and blue, or blue and red. One of these two coloring corresponds to the initial partition $(\Gamma_1,\dots, \Gamma_d)$, the other one is $(\Lambda_1,\dots, \Lambda_d)$. Case (II) is similar.

\begin{figure}[h]
\centering
\begin{tikzpicture}
  [scale=2,auto=left]
	\node[shape=circle,draw=black,minimum size = 14pt,inner sep=0.3pt] (n1) at (0,1) {$x$};
	\node[shape=circle,draw=black,minimum size = 14pt,inner sep=0.3pt] (n2) at (1,1) {$y$};
	\node[shape=circle,draw=black,minimum size = 14pt,inner sep=0.3pt] (n3) at (1,0) {$z$};

		\path[line width=0.5mm,red] (n1) edge[bend left=120, snake it] node [above] {$\;\;r$} (n2);
		\path[line width=0.5mm,blue] (n1) edge[bend left=45, snake it] node [below] {$b$} (n2);
		\path[line width=0.5mm,orange] (n1) edge[bend right=60, snake it] node [above] {$o$} (n3);
	  \draw[line width=0.5mm,orange]  (n1) edge[] node [below] {$o$} (n2)  ;
		 \draw[line width=0.5mm,blue]  (n1) edge[] node [left] {$b$} (n3)  ;
		 \draw[line width=0.5mm,red]  (n2) edge[] node [right] {$r$} (n3)  ;

	\node[shape=circle,draw=black,minimum size = 14pt,inner sep=0.3pt] (n4) at (3,1) {$x$};
	\node[shape=circle,draw=black,minimum size = 14pt,inner sep=0.3pt] (n5) at (4,1) {$y$};
	\node[shape=circle,draw=black,minimum size = 14pt,inner sep=0.3pt] (n6) at (4,0) {$z$};

		\path[line width=0.5mm,red] (n4) edge[bend left=120, snake it] node [above] {$\;\;r$} (n5);
		\path[line width=0.5mm,blue] (n4) edge[bend left=45, snake it] node [below] {$b$} (n5);
		\path[line width=0.5mm,orange] (n4) edge[bend right=60, snake it] node [above] {$o$} (n6);
	  \draw[line width=0.5mm,orange]  (n4) edge[] node [below] {$o$} (n5)  ;
		 \draw[line width=0.5mm,red]  (n4) edge[] node [left] {$r$} (n6)  ;
		 \draw[line width=0.5mm,blue]  (n5) edge[] node [right] {$b$} (n6)  ;

\end{tikzpicture}
\caption{Case (I)} \label{fig6}
\end{figure}

\begin{figure}[h]
\centering
\begin{tikzpicture}
  [scale=2,auto=left]
	\node[shape=circle,draw=black,minimum size = 14pt,inner sep=0.3pt] (n1) at (0,1) {$x$};
	\node[shape=circle,draw=black,minimum size = 14pt,inner sep=0.3pt] (n2) at (1,1) {$y$};
	\node[shape=circle,draw=black,minimum size = 14pt,inner sep=0.3pt] (n3) at (1,0) {$z$};

		\path[line width=0.5mm,red] (n1) edge[bend left=120, snake it] node [below] {$r$} (n2);
		\path[line width=0.5mm,orange] (n1) edge[bend right=60, snake it] node [above] {$o$} (n3);
		\path[line width=0.5mm,blue] (n2) edge[bend left=60, snake it] node [left] {$b$} (n3);
	  \draw[line width=0.5mm,orange]  (n1) edge[] node [below] {$o$} (n2)  ;
		 \draw[line width=0.5mm,blue]  (n1) edge[] node [left] {$b$} (n3)  ;
		 \draw[line width=0.5mm,red]  (n2) edge[] node [left] {$r$} (n3)  ;

	\node[shape=circle,draw=black,minimum size = 14pt,inner sep=0.3pt] (n4) at (3,1) {$x$};
	\node[shape=circle,draw=black,minimum size = 14pt,inner sep=0.3pt] (n5) at (4,1) {$y$};
	\node[shape=circle,draw=black,minimum size = 14pt,inner sep=0.3pt] (n6) at (4,0) {$z$};

		\path[line width=0.5mm,red] (n4) edge[bend left=120, snake it] node [below] {$r$} (n5);
		\path[line width=0.5mm,orange] (n4) edge[bend right=60, snake it] node [above] {$o$} (n6);
		\path[line width=0.5mm,blue] (n5) edge[bend left=60, snake it] node [left] {$b$} (n6);
	  \draw[line width=0.5mm,blue]  (n4) edge[] node [below] {$b$} (n5)  ;
		\draw[line width=0.5mm,red]  (n4) edge[] node [left] {$r$} (n6)  ;
		\draw[line width=0.5mm,orange]  (n5) edge[] node [left] {$o$} (n6)  ;

\end{tikzpicture}
\caption{Case (II)} \label{fig7}
\end{figure}
Combining the above observation with the results from Lemma \ref{relemma1} and the equality in  Figure \ref{fig4} we get that $$\hat{f}_{(\Gamma_1,\dots,\Gamma_d)}=-\hat{f}_{(\Lambda_1,\dots,\Lambda_d)}\in {\Lambda}^{S^2}_V(2d+1).$$
A similar analysis can be done for the case when the edges of the face $(x,y,x)$  belong to only two distinct graphs $\Gamma_r$, and $\Gamma_b$  (see Figure \ref{fig8}).

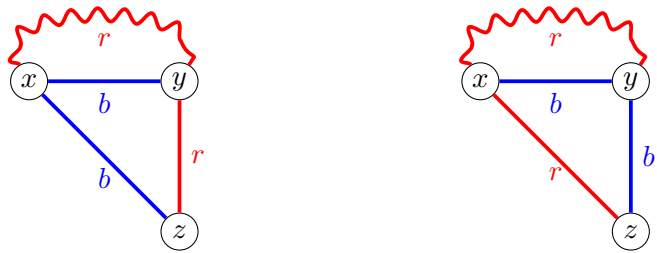
\begin{figure}[h]
\centering
\begin{tikzpicture}
  [scale=2,auto=left]
	\node[shape=circle,draw=black,minimum size = 14pt,inner sep=0.3pt] (n1) at (0,1) {$x$};
	\node[shape=circle,draw=black,minimum size = 14pt,inner sep=0.3pt] (n2) at (1,1) {$y$};
	\node[shape=circle,draw=black,minimum size = 14pt,inner sep=0.3pt] (n3) at (1,0) {$z$};

		\path[line width=0.5mm,red] (n1) edge[bend left=120, snake it] node [below] {$r$} (n2);
	  \draw[line width=0.5mm,blue]  (n1) edge[] node [below] {$b$} (n2)  ;
		\draw[line width=0.5mm,blue]  (n1) edge[] node [below] {$b$} (n3);
		\draw[line width=0.5mm,red]  (n2) edge[] node [right] {$r$} (n3);	
				
	\node[shape=circle,draw=black,minimum size = 14pt,inner sep=0.3pt] (n11) at (3,1) {$x$};
	\node[shape=circle,draw=black,minimum size = 14pt,inner sep=0.3pt] (n21) at (4,1) {$y$};
	\node[shape=circle,draw=black,minimum size = 14pt,inner sep=0.3pt] (n31) at (4,0) {$z$};

		\path[line width=0.5mm,red] (n11) edge[bend left=120, snake it] node [below] {$r$} (n21);
	  \draw[line width=0.5mm,blue]  (n11) edge[] node [below] {$b$} (n21)  ;
		\draw[line width=0.5mm,red]  (n11) edge[] node [below] {$r$} (n31);
		\draw[line width=0.5mm,blue]  (n21) edge[] node [right] {$b$} (n31);		
		
		
\end{tikzpicture}
\caption{One edge in $\Gamma_r$ and two edges in $\Gamma_b$} \label{fig8}
\end{figure}

\end{proof}

\section{Generators for ${\Lambda}^{S^2}_{V_d}(2d+1)$}

Recall the definition of the elements $E_d$.

\begin{center} $E_1=\begin{pmatrix}
1& e_1\\
\wedge&1
\end{pmatrix}\in {\Lambda}^{S^2}_{V_1}(3), $\;$ $\;$ $\;$ E_d=\begin{pmatrix}
E_{d-1}& A_{d}\\
\wedge & \begin{pmatrix}
1& e_d\\
&1
\end{pmatrix}
\end{pmatrix}\in {\Lambda}^{S^2}_{V_d}(2d+1),$ \end{center}
where
$$A_{d}=\otimes \begin{pmatrix}
e_d&e_1\\
e_1&e_d\\
e_d&e_2\\
e_2&e_d\\
.&.\\
.&.\\
e_d&e_{d-1}\\
e_{d-1}&e_d
\end{pmatrix}\in {V_d}^{\otimes 2(2d-2)}.$$

\begin{remark} It is obvious that $\{E_1\}$ is a basis for ${\Lambda}^{S^2}_{V_1}(3)$. One can see that
$$E_2=\begin{pmatrix}
1& e_1&\boxed{e_2}&\boxed{e_1}\\
&1&e_1&e_2\\
& &1&\boxed{e_2}\\
\wedge&&&1
\end{pmatrix}=-\begin{pmatrix}
1& e_1&e_2&e_2\\
&1&e_1&e_2\\
& &1&e_1\\
\wedge&&&1
\end{pmatrix},$$
and so from \cite{sta2} we know that $\{ E_2\} $ is  a basis  for ${\Lambda}^{S^2}_{V_2}(5)$.
\end{remark}

Next we need the following lemmas.
\begin{lemma}
For every $1\leq i<j\leq 2d$, and for all $1\leq k\leq d$, there exists $$Z_{i,j;k}=\begin{pmatrix}
1& v_{1,2}&...&v_{1,2d-1}&v_{1,2d}\\
& 1&...&v_{2,2d-1}&v_{2,2d}\\
& &...&.&.\\
& & &1&v_{2d-1,2d}\\
\otimes & & & &1
\end{pmatrix}\in \mathcal{G}_{\mathcal{B}_d}(2d+1)$$ such that $v_{i,j}=e_k$, and the image of $Z_{i,j;k}$ in
${\Lambda}^{S^2}_{V_d}(2d+1)$   is $E_d$,   $$\hat{Z}_{i,j;k}=\begin{pmatrix}
1& v_{1,2}&...&v_{1,2d-1}&v_{1,2d}\\
& 1&...&v_{2,2d-1}&v_{2,2d}\\
& &...&.&.\\
& & &1&v_{2d-1,2d}\\
\wedge& & & &1
\end{pmatrix}=E_d\in {\Lambda}^{S^2}_{V_d}(2d+1).$$
\label{lemma3x}
\end{lemma}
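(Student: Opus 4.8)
The plan is to transport the single known entry of $E_d$ to an arbitrary position and color by means of the two group actions from Lemma \ref{GLLem} and Lemma \ref{actions2n}, and then to correct the resulting sign (if needed) by a single involution from Lemma \ref{keylemma}. The starting observation is that the partition underlying $E_d$ has $v_{1,2}=e_1$: this is clear from the inductive definition, since the top-left block of $E_d$ is $E_{d-1}$ and $E_1$ has $e_1$ in position $(1,2)$. Thus $Z_{1,2;1}=E_d$ settles the case $(i,j;k)=(1,2;1)$, and the remaining task is to move this entry to a prescribed position $(i,j)$ and to recolor it to a prescribed $e_k$.

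First I would recolor. Choose $\tau\in S_d$ with $\tau(k)=1$ (for instance the transposition $(1\,k)$ when $k\neq 1$). Since $\tau\in S_d\subset GL(V_d)$ permutes the basis, the generator $\tau* E_d$ is again a single element of $\mathcal{G}_{\mathcal{B}_d}(2d+1)$; its $(1,2)$-entry is $\tau(e_1)=e_{\tau^{-1}(1)}=e_k$, and by Lemma \ref{GLLem} its image in $\Lambda^{S^2}_{V_d}(2d+1)$ equals $(\mathrm{sign}(\tau))^{2d-1}E_d$. Next I would move the position: choose $\sigma\in S_{2d}$ with $\sigma(1)=i$ and $\sigma(2)=j$, and set $Z_{i,j;k}:=\sigma\rightharpoonup(\tau* E_d)$, again a single generator. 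By the definition of the $S_{2d}$-action its $(i,j)$-entry is the $(1,2)$-entry of $\tau* E_d$, namely $e_k$, and by Lemma \ref{actions2n} its image is
$$\hat{Z}_{i,j;k}=\varepsilon\, E_d,\qquad \varepsilon=(\mathrm{sign}(\tau))^{2d-1}(\mathrm{sign}(\sigma))^{d-1}\in\{\pm 1\}.$$

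It remains to force $\varepsilon=1$. The partition underlying $Z_{i,j;k}$ is just a relabeling (of colors and of vertices) of the homogeneous cycle-free partition $E_d$, hence is itself homogeneous and cycle-free, so Lemma \ref{keylemma} applies to it. If $\varepsilon=1$ we are done. If $\varepsilon=-1$, pick three distinct vertices $x,y,z$ with $\{i,j\}\not\subseteq\{x,y,z\}$ (possible since $2d\geq 4$) and replace $Z_{i,j;k}$ by the involuted partition $(Z_{i,j;k})^{(x,y,z)}$: this modifies only edges inside $\{x,y,z\}$ and hence leaves the edge $(i,j)$ unchanged, so the $(i,j)$-entry is still $e_k$; it remains homogeneous cycle-free, and by Lemma \ref{keylemma} its image is $-\varepsilon E_d=E_d$. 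Either way we obtain the required generator.

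The only delicate point is the sign bookkeeping; everything else is formal. One can in fact avoid the involution entirely by exploiting the freedom left in $\tau$ and $\sigma$: for odd $d$ one has $(\mathrm{sign}(\sigma))^{d-1}=1$ and may choose $\tau$ with $\tau(k)=1$ of sign $+1$ (possible for $d\geq 3$), while for even $d$ one chooses $\sigma$ of the same sign as $\tau$, so that $\varepsilon=(\mathrm{sign}(\tau))^{2}=1$ (possible since the $2d-2\geq 2$ images $\sigma(3),\dots,\sigma(2d)$ remain free), the case $d=1$ being trivial. I expect this sign analysis, together with the bookkeeping that the two actions compose to a single generator carrying the asserted $(i,j)$-entry, to be the main (though routine) obstacle.
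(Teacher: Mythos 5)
Your proposal is correct and follows essentially the same route as the paper, whose entire proof is the one-line remark that the statement ``follows directly from Lemma \ref{GLLem} and Lemma \ref{actions2n}'': you transport the entry $e_1$ at position $(1,2)$ of $E_d$ by the $S_d$-action (recoloring) and the $S_{2d}$-action (repositioning), exactly as intended. Your additional sign bookkeeping --- either correcting $\varepsilon=-1$ by an involution from Lemma \ref{keylemma} on a face avoiding the edge $(i,j)$, or, more in the spirit of the paper's citation, absorbing the sign by choosing $\tau$ even (odd $d$, via a $3$-cycle) or $\mathrm{sign}(\sigma)=\mathrm{sign}(\tau)$ (even $d$, using the free images $\sigma(3),\dots,\sigma(2d)$) --- is a valid and welcome elaboration of a detail the paper leaves implicit.
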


\begin{proof} Follows directly from Lemma \ref{GLLem}, and Lemma \ref{actions2n}.
\end{proof}

\begin{lemma} Let $\gamma\in {\Lambda}^{S^2}_{V_d}(2d+1)$ such that
\begin{eqnarray}
\gamma=\begin{pmatrix}
E_{d-1}& B\\
 \wedge& \begin{pmatrix}
1& e_d\\
&1
\end{pmatrix}
\end{pmatrix},
\label{gam}
\end{eqnarray}
for some
$$B= \otimes \begin{pmatrix} b_{1,1}&b_{1,2}\\
b_{2,1}&b_{2,2}\\
b_{3,1}&b_{3,2}\\
.&.\\
.&.\\
b_{2d-3,1}&b_{2d-3,2}\\
b_{2d-2,1}&b_{2d-2,2}
\end{pmatrix}\in {V_d}^{\otimes 2(2d-2)}.$$
 Then, $\gamma$ is a multiple of $E_d$.
\label{lemmagam}
\end{lemma}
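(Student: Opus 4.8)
The plan is to reduce $\gamma$ to the standard generator $E_d$ in three moves: make the entries of $B$ into basis vectors, throw away the partitions that are forced to vanish, and transport the surviving partitions onto the $A_d$-pattern using the involutions of Lemma \ref{keylemma}. For the first move I would exploit the multilinearity of the tensor product in each of the $2(2d-2)$ entries $b_{s,t}$ of $B$: expanding every $b_{s,t}$ in the basis $\{e_1,\dots,e_d\}$ writes $\gamma$ as a $k$-linear combination of elements $\hat f_P$, where $P$ ranges over the $d$-partitions of $K_{2d}$ whose restriction to the vertices $\{1,\dots,2d-2\}$ is exactly the $E_{d-1}$-partition and whose edge $(2d-1,2d)$ has color $d$ (this is forced because in $\gamma$ the upper-left block is literally $E_{d-1}$ and the corner is $e_d$). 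It therefore suffices to prove that each such $\hat f_P$ is a multiple of $E_d$.

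Next I would discard the degenerate terms. Since $K_{2d}$ has $d(2d-1)$ edges, a non-homogeneous $P$ must have a color class with at least $2d$ edges, hence a cycle by Lemma \ref{lemmagraph}; and any $P$ containing a cycle has $\hat f_P=0$ by the argument in Lemma \ref{generators} (equivalently by Lemma \ref{lemma2}). Thus only the homogeneous, cycle-free completions $P$ of the fixed $E_{d-1}$-block survive; call this set $\mathcal{C}$. For $P\in\mathcal{C}$, homogeneity forces each color $i<d$ to occur exactly twice among the edges of $B$ and color $d$ to occur $2(d-1)$ times — the same profile as $A_d$ — and cycle-freeness forces each color $i<d$ to send exactly one edge to $2d-1$ and one to $2d$, while the color-$d$ edges of $B$ together with the central edge form a twin-star with centers $2d-1$ and $2d$.

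For the last move I would transport each $P\in\mathcal{C}$ to the $A_d$-completion by the involutions $(\cdot)^{(s,2d-1,2d)}$ of Lemma \ref{keylemma}, with $s\in\{1,\dots,2d-2\}$. These are exactly the involutions whose face $(s,2d-1,2d)$ has no edge joining two vertices of $\{1,\dots,2d-2\}$, hence none of them disturbs the $E_{d-1}$-block: each keeps the restriction to $\{1,\dots,2d-2\}$ equal to $E_{d-1}$ and so maps $\mathcal{C}$ into itself, while multiplying the corresponding class by $-1$. Consequently, if $P$ is joined to the $A_d$-completion by a chain of $m$ such involutions, then $\hat f_P=(-1)^m E_d$, and summing over the surviving terms shows $\gamma$ is a multiple of $E_d$.

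The hard part is the combinatorial heart of the last move: proving that $\mathcal{C}$ is connected under the $2d-2$ involutions $(\cdot)^{(s,2d-1,2d)}$, i.e. that every completion of the $E_{d-1}$-block can be reduced to $A_d$. I would encode a completion by the attachment data of the $d$ colors to the two centers $2d-1,2d$ together with the color of the central edge, check that an involution on the face $(s,2d-1,2d)$ realizes an elementary move on this data — relocating a color-$d$ attachment between the two centers, or exchanging the central color with the color of one of the two face edges — and then give a sorting argument bringing any such data to that of $A_d$. A subtle point is that the central edge may temporarily change color under these moves, so the reduction must be carried out in the full family $\mathcal{C}$ (all central colors allowed) and not in the subfamily with central color $d$; verifying that these elementary moves genuinely suffice to sort every completion is the one step that does not follow formally from the earlier lemmas, and it is where the bulk of the work lies.
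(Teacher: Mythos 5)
Your reduction steps are sound and agree with how the paper's own proof begins: expanding $B$ over the basis, killing the non-homogeneous and cyclic completions via Lemma \ref{lemmagraph} and the argument of Lemma \ref{generators}, and pinning down the structure of the survivors (one $e_d$ per row of $B$, one $e_k$ per column for each $k<d$ when the central color is $d$) is exactly the paper's starting point, and the sign bookkeeping via Lemma \ref{keylemma} is correct. The genuine gap is the one you flag yourself: the transitivity of the $2d-2$ involutions $(\cdot)^{(s,2d-1,2d)}$ on $\mathcal{C}$ is asserted, not proved, and that claim \emph{is} the lemma --- everything before it is bookkeeping. As written you never even determine what the unique partner $P^{(s,2d-1,2d)}$ of a given completion is (it depends on whether the unique $B$-edge of the current central color is attached to $2d-1$ or to $2d$), so you cannot verify that your ``elementary moves'' act as described, let alone that they suffice to sort. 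For what it is worth, the claim is true: one can check that three successive involutions, on faces $(s,2d-1,2d)$, $(s',2d-1,2d)$, $(s,2d-1,2d)$, swap the contents of rows $s$ and $s'$ of $B$ while restoring central color $d$, and such transpositions connect every standard completion to the $A_d$-pattern; but carrying out this verification is the bulk of the lemma, and it is missing.

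It is also worth seeing how the paper sidesteps this difficulty entirely: it does not freeze the $E_{d-1}$ block. By Lemma \ref{lemma3x}, $E_{d-1}$ admits a representative with any prescribed basis vector in any prescribed position $(i,j)$ with $i<j\leq 2d-2$, so the paper may apply relations on faces $(i,j,2d-1)$ having \emph{two} vertices inside the block. This produces two much stronger moves --- swapping two distinct colors $e_s,e_t$ within one column of $B$ (the paper's Claim 1), and flipping adjacent rows $(e_d,e_k),(e_k,e_d)$ of $B$ (Claim 2) --- for which the sorting argument is immediate. Your insistence on faces meeting the block in a single vertex, precisely so as not to disturb $E_{d-1}$, is what makes your connectivity question delicate. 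To complete a proof along your own lines you must supply that combinatorial argument; alternatively, import the paper's trick of re-choosing the representative of $E_{d-1}$, after which the hard part of your plan dissolves.
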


\begin{proof} First take  $X=\begin{pmatrix}
1& x_{1,2}&...&x_{1,2d-1}&x_{1,2d}\\
& 1&...&x_{2,2d-1}&x_{2,2d}\\
& &...&.&.\\
& & &1&x_{2d-1,2d}\\
& & & &1
\end{pmatrix}\in V^{\otimes d(2d-1)}$ such that $\hat{X}=\gamma$. During the proof we will use several representatives for $\gamma$ that are convenient for specific computations. For example we may assume that $x_{2d-1,2d}=e_d$ (this follows directly from the assumption on $\gamma$).

Without loss of generality, we may assume that all entries in $X$ and $B$ are elements of the basis $\{e_1,e_2,...,e_d\}$. We also assume that $\hat{X}$ is not zero in ${\Lambda}^{S^2}_{V_d}(2d+1)$ (otherwise the result is trivial). Since in $E_{d-1}$ we already have $2d-3$ entries equal to $e_k$ for all $1\leq k\leq d-1$,  by Lemma \ref{lemma2} we know that the $4d-4$ entries in the tensor matrix $B$ must be: $e_d$ with multiplicity $2d-2$, and $e_k$  with multiplicity $2$ (for all $1\leq k\leq d-1$).

If there exists an $1\leq i\leq 2d-2$ such that  $x_{i,2d-1}=b_{i,1}=e_d$ and $x_{i,2d}=b_{i,2}=e_d$ then  $\hat{X}=0$ (that is because $x_{2d-1,2d}=e_d$).  So we may assume that there is exactly one $e_d$ in each row of the tensor matrix $B$.

If there exist $1\leq i<j\leq 2d-2$ and $1\leq k\leq d-1$ such that $x_{i,2d-1}=b_{i,1}=e_k=b_{j,1}=x_{j,2d-1}$, then again $\hat{X}=0$. To prove this we use Lemma \ref{lemma3x} to find a representative for $E_{d-1}$ such that the entry in the position $(i,j)$ is $e_k$. This gives a representative for $\gamma$ that  has the entry $e_k$ on positions $(i,j)$, $(i,2d-1)$ and $(j,2d-1)$, which implies that $\gamma=0$. One has a similar statement for the second column of $B$. So, we may assume that for each $1\leq k\leq d-1$, there is exactly one $e_k$ in the first column of $B$, and exactly one $e_k$ in the second column of $B$.

To summarize, we may assume that in each row of the matrix $B$ there is exactly one entry equal to $e_{d}$ and in each column of $B$ we have $d-1$ entries equal to $e_d$, and one entry equal to $e_k$, for all $1\leq k\leq d-1$. This accounts for all the entries in $B$.

Claim 1:  If $x_{i,2d-1}=e_s$ and $x_{j,2d-1}=e_t$, for some $1\leq i<j\leq 2d-2$ and $1\leq s\neq t\leq d-1$, we can interchange $e_s$ and $e_t$ such that Equation \ref{gam} still holds true up to a minus sign. Indeed, using Lemma \ref{lemma3x}, we may assume that $x_{i,j}=e_s$.  Now, we use the identity
$$\begin{pmatrix}
1& e_s&e_s\\
&1&e_t\\
\otimes& &1
\end{pmatrix}+\begin{pmatrix}
1& e_t&e_s\\
&1&e_s\\
\otimes& &1
\end{pmatrix}+\begin{pmatrix}
1& e_s&e_t\\
&1&e_s\\
\otimes& &1
\end{pmatrix}=0,$$
for positions $(i,j)$, $(i,2d-1)$ and $(j,2d-1)$. Notice that by  Lemma \ref{lemma2} the second term will become zero in ${\Lambda}^{S^2}_{V_{d}}(2d+1)$, because its corresponding matrix has $2d-2$ entries equal to $e_t$ in the first $2d-2$ columns.
This means that we can interchange $b_{i,1}=e_s$ and $b_{j,1}=e_t$ in the tensor matrix $B$ with the price of changing $\gamma$ by a minus sign. A similar statement is true for the second column in $B$.

Claim 2:  Assume that $b_{i,1}=e_d=b_{i+1,2}$ and $b_{i,2}=e_k=b_{i+1,1}$ for some $1\leq i<2d-2$, and some $1\leq k\leq d-1$.  Then we can find another tensor matrix $B'$ that satisfies Equation \ref{gam} (up to a minus sign), and is obtained from $B$
by changing the rows $i$ and $i+1$ according to the following rule, if
$$B=\otimes\begin{pmatrix}
*&*\\
b_{i,1}& b_{i,2}\\
b_{i+1,1}&b_{i+1,2}\\
*&*
\end{pmatrix}=\otimes\begin{pmatrix}
*&*\\
e_d& e_k\\
e_k&e_d\\
*&*
\end{pmatrix},$$  then
$$ B'=\otimes\begin{pmatrix}
*&*\\
b'_{i,1}& b'_{i,2}\\
b'_{i+1,1}&b'_{i+1,2}\\
*&*
\end{pmatrix}=\otimes\begin{pmatrix}
*&*\\
e_k& e_d\\
e_d&e_k\\
*&*
\end{pmatrix}.$$

To prove this claim notice that by Lemma \ref{lemma3x} we may assume $x_{i,i+1}=e_k$. Then from the Equation
(\ref{act1}) we use the identity
\begin{eqnarray*}
\begin{pmatrix}
1& e_k&e_d&e_k\\
&1&e_k&e_d\\
& &1&e_d\\
\wedge&&&1
\end{pmatrix}=
-\begin{pmatrix}
1& e_k&e_k&e_d\\
&1&e_d&e_k\\
& &1&e_d\\
\wedge&&&1
\end{pmatrix},
\label{eq12}
\end{eqnarray*}
on the positions $(i,i+1)$, $(i,2d-1)$, $(i,2d)$, $(i+1,2d-1)$, $(i+1,2d)$, and $(2d-1,2d)$ to prove our statement.

Using repeatedly Claim 1 and Claim 2, we can move the entries $e_d$ in the positions $(1,2d-1)$, $(2,2d)$, $(3,2d-1)$, $(4,2d)$,..., $(2d-3,2d-1)$, and $(2d-2,2d-1)$. Finally, using Claim 1, for every $1\leq k\leq d-1$ we can move $e_k$ to the positions $(2k-1,2d)$ and $(2k,2d+1)$, which  means that up to a constant $\gamma=E_d$.
\end{proof}
We are now ready to give a single generator for the case $dim (V_3)=3$.
\begin{proposition}\label{dimless1}
Any element $\hat{x}\in {\Lambda}^{S^2}_{V_3}(7)$ can be written as  $\hat{x}=\begin{pmatrix}
y& B\\
 \wedge& \begin{pmatrix}
1& e_3\\
&1
\end{pmatrix}
\end{pmatrix}$, for some $\hat{y}\in {\Lambda}^{S^2}_{V_{2}}(5)$, and $B\in V_3^{\otimes 2\cdot 4}={V_3}^{\otimes 2(2\cdot 3-2)}$. In particular, we have that $\hat{x}$ is a multiple of $E_3$ so $dim({\Lambda}^{S^2}_{V_3}(7)\leq 1$.
\end{proposition}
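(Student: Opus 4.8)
The plan is to reduce everything to a single generator, bring that generator into the block shape by re-colouring faces, and then quote Lemma \ref{lemmagam}. By Lemma \ref{generators} it is enough to prove the statement for one generator $\hat{f}_{(\Gamma_1,\Gamma_2,\Gamma_3)}$ coming from a homogeneous cycle-free $3$-partition of $K_6$: once each such generator is shown to be a multiple of $E_3$, linearity extends this to every $\hat{x}$, and conversely any multiple of $E_3$ does sit in the displayed block form, since the top-left $4\times4$ corner of $E_3$ is exactly $E_2\in{\Lambda}^{S^2}_{V_2}(5)$ and its $(5,6)$-entry is $e_3$ (so $\lambda E_3$ is obtained by scaling that corner to $\lambda E_2$).

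Because the partition is homogeneous and cycle-free, each $\Gamma_i$ is a tree with $5$ edges on the $6$ vertices. First I would try to move the colour-$3$ tree $\Gamma_3$ into \emph{twin-star} position relative to $\{5,6\}$: after the reduction the edge $(5,6)$ should carry colour $3$ and no edge inside $\{1,2,3,4\}$ should carry colour $3$. The only moves available are the face-involutions $(\Gamma_1,\Gamma_2,\Gamma_3)\mapsto(\Gamma_1,\Gamma_2,\Gamma_3)^{(x,y,z)}$ of Lemma \ref{keylemma}, each of which changes $\hat{f}$ only by a sign and preserves the homogeneous cycle-free property. By Lemma \ref{lemma5E} the tree $\Gamma_3$ is one of $I_6,Y_6,E_6,H_6,C_6,S_6$, so after relabelling by a suitable $\sigma\in S_6$ (which sends $E_3$ to $\pm E_3$ by Lemma \ref{actions2n}, hence is harmless for the conclusion) it suffices to treat these six shapes and, for each, exhibit a finite sequence of face-involutions that deletes every colour-$3$ edge from $\{1,2,3,4\}$ while keeping $(5,6)$ coloured $3$.

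Once $\Gamma_3$ is in this position the $4\times4$ corner $y$ of the representative, supported on $\{1,2,3,4\}$, uses only the colours $e_1,e_2$, so $\hat{y}\in{\Lambda}^{S^2}_{V_2}(5)$. Since $\dim{\Lambda}^{S^2}_{V_2}(5)=1$ with basis $E_2$, we get $\hat{y}=\lambda E_2$; the elements of ${\mathcal E}^{S^2}_{V_2}(5)$ realizing $y-\lambda E_2$ are supported on faces inside $\{1,2,3,4\}$, and tensoring them with the untouched cross-block $B$ and the entry $e_3$ turns them into elements of ${\mathcal E}^{S^2}_{V_3}(7)$. Thus in ${\Lambda}^{S^2}_{V_3}(7)$ we may replace $y$ by $\lambda E_2=\lambda E_{d-1}$ without changing $\hat{x}$, placing $\hat{x}$ precisely in the hypothesis of Lemma \ref{lemmagam} with $d=3$ and forcing $\hat{x}$ to be a multiple of $E_3$; hence $\dim{\Lambda}^{S^2}_{V_3}(7)\le1$.

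The main obstacle is the middle step, the twin-star normal form for $\Gamma_3$. The involutions act simultaneously on all three colour classes and only locally on a triangle, so removing a colour-$3$ edge from $\{1,2,3,4\}$ can in principle create a new one, and one must organize the moves into a terminating procedure. I expect this to need a monovariant — for instance the number of colour-$3$ edges incident to $\{1,2,3,4\}$, or the graph distance of $\Gamma_3$ from a double star — combined with the case analysis over the six tree types of Lemma \ref{lemma5E}; equivalently it can be read off from the explicit list of $S_6\times S_3$-orbits worked out in the Appendix.
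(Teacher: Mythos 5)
Your frame coincides with the paper's own: reduce to homogeneous cycle-free generators via Lemma \ref{generators}, normalize so that all $e_3$-entries sit in the last two columns (i.e.\ $\Gamma_3$ becomes a twin star with centers $5,6$, the type $H_6$ of Lemma \ref{lemma5E}), replace the $4\times 4$ corner by $\lambda E_2$, and invoke Lemma \ref{lemmagam}. Your endgame is sound — in particular the observation that the elements of $\mathcal{E}^{S^2}_{V_2}(5)$ realizing $y-\lambda E_2$ live on faces inside $\{1,2,3,4\}$ and hence tensor into $\mathcal{E}^{S^2}_{V_3}(7)$ makes explicit a step the paper treats as immediate. But the step you defer, namely exhibiting for each tree type a terminating sequence of moves that clears the colour-$3$ edges out of $\{1,2,3,4\}$, is not a technical detail: it is the entire content of the paper's proof, and you leave it unproved. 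You also do not rule out the types $S_6$ and $C_6$; the paper does this first (a vertex of degree $\geq 4$ in $\Gamma_3$ leaves at most one edge at that vertex for $\Gamma_1\cup\Gamma_2$, so one of them has all five of its edges on the remaining five vertices and must contain a cycle by Lemma \ref{lemmagraph}), and without this observation your case list is not even well posed.

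For the remaining types the paper exhibits the reductions concretely: $Y_6$ goes to a twin star by a single involution on the face $(4,5,6)$; $I_6$ goes to $Y_6$ or $E_6$ by an involution on $(3,4,5)$; and the $E_6$ case — the bulk of the proof — requires a sub-case analysis on the placement of the $e_1$'s and $e_2$'s, carried out with the six-term relation of Lemma \ref{relemma1} together with the vanishing of the terms whose $\Gamma_3$ acquires a cycle. Your proposed substitute, a monovariant such as the number of colour-$3$ edges inside $\{1,2,3,4\}$, is unsubstantiated: a face involution changes two edges at once and can recreate colour-$3$ edges inside $\{1,2,3,4\}$, and you give no argument that a strictly decreasing move always exists from every configuration. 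Note that the existence of such a terminating procedure is essentially the transitivity statement of Remark \ref{rem51}(i), which the paper explicitly identifies as the missing ingredient for the conjecture in general dimension $d$; for $d=3$ it is exactly what the case analysis in Proposition \ref{dimless1} establishes, so it cannot be assumed. Finally, the appeal to the Appendix is circular in spirit: the orbit classification there is itself obtained by computation, and converting it into a proof of connectivity under the involutions would require precisely the bookkeeping you omit.
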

\begin{proof}


Essentially, we want to find an element equivalent to $\hat{x}$, which has every entry $e_3$ in the last two columns of its matrix. Without loss of generality, we can assume that $x\in \hat{\mathcal{G}}_{\mathcal{B}_3}^{cf}(7)$, where
$$\hat{\mathcal{G}}_{\mathcal{B}_d}^{cf}(7)=\{\hat{f}_{(\Gamma_1,\Gamma_2\Gamma_3)}\vert (\Gamma_1,\Gamma_2,\Gamma_3) {\rm ~is~ a ~homogeneous,~cycle}{\text -}{\rm free~}d{\text -}{\rm partition~of~}K_{6}\}.$$


In particular, we may assume that the entries of $x$ consist of five $e_1$'s, $e_2$'s and $e_3$'s.  Recall from Lemma \ref{lemma5E} that there are only six types of cycle-free graphs with 6 vertices and 5 edges, namely $I_6$, $Y_6$, $E_6$, $H_6$, $C_6$, and $S_6$.

Note that $\Gamma_3$ cannot  be of the form $S_6$ or $C_6$. Indeed, in both of these cases we would have a vertex $v_0$ in $\Gamma_3$ which is connected to at least 4 other vertices. This would imply that  either $\Gamma_1$ or $\Gamma_2$ will not have an edge connected to $v_0$, and so by Lemma \ref{lemmagraph} they will have a cycle, which is an contradiction with the fact that  $(\Gamma_1,\Gamma_2,\Gamma_3)$ is cycle-free.

If $\Gamma_3$ is $H_6$, then after acting with a permutation in $S_6$ we can realize it as in Figure \ref{4easy}, and so  $e_3$ is  located in positions on the last two columns of matrix $x$.

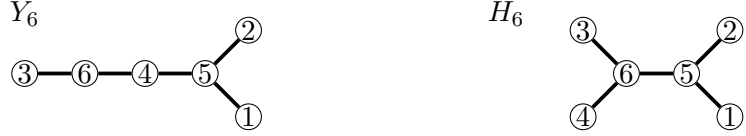
\begin{figure}[h]
\centering

\begin{tikzpicture}
  [scale=0.8,auto=left]
  \node[shape=circle,draw=black,minimum size = 5pt,inner sep=0.3pt] (n1) at (0,0) {3};
  \node[shape=circle,draw=black,minimum size = 5pt,inner sep=0.3pt] (n2) at (1,0)  {6};
  \node[shape=circle,draw=black,minimum size = 5pt,inner sep=0.3pt] (n3) at (2,0)  {4};
  \node[shape=circle,draw=black,minimum size = 5pt,inner sep=0.3pt] (n4) at (3,0)  {5};
	\node[shape=circle,draw=black,minimum size = 5pt,inner sep=0.3pt] (n5) at (3.72,-0.72)   {1};
	\node[shape=circle,draw=black,minimum size = 5pt,inner sep=0.3pt] (n6) at (3.72,0.72)   {2};
	\node[shape=circle,minimum size = 14pt,inner sep=0.3pt] (n24) at (0,1) {$Y_6$};
  \foreach \from/\to in {n1/n2,n2/n3,n3/n4,n4/n5,n4/n6}
    \draw[line width=0.5mm]  (\from) -- (\to);	
		
  \node[shape=circle,draw=black,minimum size = 5pt,inner sep=0.3pt]  (n11) at (9.28,-0.72) {4};
  \node[shape=circle,draw=black,minimum size = 5pt,inner sep=0.3pt]  (n21) at (9.28,0.72)  {3};
  \node[shape=circle,draw=black,minimum size = 5pt,inner sep=0.3pt]  (n31) at (10,0)  {6};
  \node[shape=circle,draw=black,minimum size = 5pt,inner sep=0.3pt]  (n41) at (11,0)  {5};
	\node[shape=circle,draw=black,minimum size = 5pt,inner sep=0.3pt]  (n51) at (11.72,-0.72)   {1};
	\node[shape=circle,draw=black,minimum size = 5pt,inner sep=0.3pt]  (n61) at (11.72,0.72)   {2};
	\node[shape=circle,minimum size = 14pt,inner sep=0.3pt] (n24) at (8,1) {$H_6$};
  \foreach \from/\to in {n11/n31,n21/n31,n31/n41,n41/n51,n41/n61}
    \draw[line width=0.5mm]  (\from) -- (\to);	

\end{tikzpicture}

\caption{Cycle-free graphs $Y_6$ and $H_6$ with $e_3$ in the last two columns} \label{4easy}
\end{figure}

Moreover, in this situation we have $$x=\begin{pmatrix}
y& B\\
 \wedge & \begin{pmatrix}
1& e_3\\
&1
\end{pmatrix}
\end{pmatrix},$$ with $y$ being a multiple of $E_2$ and, by Lemma \ref{lemmagam}, we get that $\hat{x}$ is a multiple of $E_3$.

If $\Gamma_3$ is $Y_6$, after acting with a permutation in $S_6$ we can realize it as in Figure \ref{4easy}. Then we can apply Lemma \ref{keylemma} to face $(4, 5, 6)$ to obtain an element equivalent to $-x$, which has $e_3$ in position $(5, 6)$, which  from Lemma \ref{lemmagam} gives again that $\hat{x}$ is a multiple of $E_3$.


If $\Gamma_3$, is of type $I_6$, then after acting with element in $S_6$ we can assume that we have the labeling from Figure \ref{easy123} (left most graph).  Using Lemma \ref{keylemma} for face $(3, 4, 5)$, we can reduce the problem to the case when $\Gamma_3$ is either of type $Y_6$ or $E_6$ (see Figure \ref{easy123}).

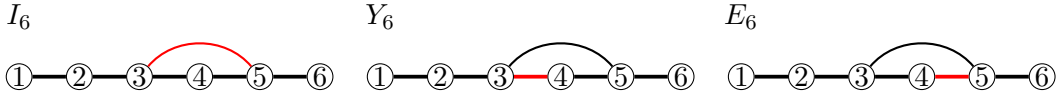
\begin{figure}[ht]
\centering
\begin{tikzpicture}
  [scale=0.8,auto=left]
  \node[shape=circle,draw=black,minimum size = 5pt,inner sep=0.3pt] (n1) at (0,0) {1};
  \node[shape=circle,draw=black,minimum size = 5pt,inner sep=0.3pt] (n2) at (1,0)  {2};
  \node[shape=circle,draw=black,minimum size = 5pt,inner sep=0.3pt] (n3) at (2,0)  {3};
  \node[shape=circle,draw=black,minimum size = 5pt,inner sep=0.3pt] (n4) at (3,0)  {4};
	\node[shape=circle,draw=black,minimum size = 5pt,inner sep=0.3pt] (n5) at (4,0)   {5};
	\node[shape=circle,draw=black,minimum size = 5pt,inner sep=0.3pt] (n6) at (5,0)   {6};
	\node[shape=circle,minimum size = 14pt,inner sep=0.3pt] (n24) at (0,1) {$I_6$};
  \foreach \from/\to in {n1/n2,n2/n3,n3/n4,n4/n5,n5/n6}
    \draw[line width=0.5mm]  (\from) -- (\to);	
    \path[thick,red] (n3) edge[bend left=50] node [above] {} (n5);

 \node[shape=circle,draw=black,minimum size = 5pt,inner sep=0.3pt] (n1) at (6,0) {1};
  \node[shape=circle,draw=black,minimum size = 5pt,inner sep=0.3pt] (n2) at (7,0)  {2};
  \node[shape=circle,draw=black,minimum size = 5pt,inner sep=0.3pt] (n3) at (8,0)  {3};
  \node[shape=circle,draw=black,minimum size = 5pt,inner sep=0.3pt] (n4) at (9,0)  {4};
	\node[shape=circle,draw=black,minimum size = 5pt,inner sep=0.3pt] (n5) at (10,0)   {5};
	\node[shape=circle,draw=black,minimum size = 5pt,inner sep=0.3pt] (n6) at (11,0)   {6};
	\node[shape=circle,minimum size = 14pt,inner sep=0.3pt] (n24) at (6,1) {$Y_6$};
  \foreach \from/\to in {n1/n2,n2/n3,n4/n5,n5/n6}
    \draw[line width=0.5mm]  (\from) -- (\to);	
    \path[thick,black] (n3) edge[bend left=50] node [above] {} (n5);
    \draw[line width=0.5mm,red] (n3) -- (n4);

 \node[shape=circle,draw=black,minimum size = 5pt,inner sep=0.3pt] (n1) at (12,0) {1};
  \node[shape=circle,draw=black,minimum size = 5pt,inner sep=0.3pt] (n2) at (13,0)  {2};
  \node[shape=circle,draw=black,minimum size = 5pt,inner sep=0.3pt] (n3) at (14,0)  {3};
  \node[shape=circle,draw=black,minimum size = 5pt,inner sep=0.3pt] (n4) at (15,0)  {4};
	\node[shape=circle,draw=black,minimum size = 5pt,inner sep=0.3pt] (n5) at (16,0)   {5};
	\node[shape=circle,draw=black,minimum size = 5pt,inner sep=0.3pt] (n6) at (17,0)   {6};
	\node[shape=circle,minimum size = 14pt,inner sep=0.3pt] (n24) at (12,1) {$E_6$};
  \foreach \from/\to in {n1/n2,n2/n3,n3/n4,n5/n6}
    \draw[line width=0.5mm]  (\from) -- (\to);	
    \path[thick,black] (n3) edge[bend left=50] node [above] {} (n5);
    \draw[line width=0.5mm,red] (n4) -- (n5);
\end{tikzpicture}
\caption{Reduction of $I_6$ to $Y_6$ or $E_6$}\label{easy123}
\end{figure}

 Therefore, we only need to justify the case $\Gamma_3=E_6.$ By using an appropriate permutation, we may assume that $\Gamma_3$ is as in Figure \ref{hard}.

 \begin{figure}[h]
 \centering

\begin{tikzpicture}
  [scale=0.8,auto=left]

 \node[shape=circle,draw=black,minimum size = 5pt,inner sep=0.3pt] (n1) at (6,0) {6};
  \node[shape=circle,draw=black,minimum size = 5pt,inner sep=0.3pt] (n2) at (7,0)  {4};
  \node[shape=circle,draw=black,minimum size = 5pt,inner sep=0.3pt] (n3) at (8,0)  {1};
  \node[shape=circle,draw=black,minimum size = 5pt,inner sep=0.3pt] (n4) at (8,1)  {3};
	\node[shape=circle,draw=black,minimum size = 5pt,inner sep=0.3pt] (n5) at (9,0)   {2};
	\node[shape=circle,draw=black,minimum size = 5pt,inner sep=0.3pt] (n6) at (10,0)   {5};
	\node[shape=circle,minimum size = 14pt,inner sep=0.3pt] (n24) at (6,1) {$E_6$};
  \foreach \from/\to in {n1/n2,n2/n3,n3/n4,n3/n5,n5/n6}
    \draw[line width=0.5mm]  (\from) -- (\to);

\end{tikzpicture}
\caption{$\Gamma_3=E_6$}\label{hard}
\end{figure}
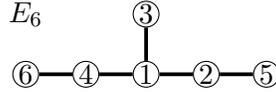

Then, element $\hat{x}$ has entries

$$\hat{x}=\begin{pmatrix}
	1& e_3& e_3& e_3& * & *\\
	&1&*&*& *&*\\
	& &1&*&e_3&*\\
	&&&1& *&e_3\\
	&&&&1&*\\
	\wedge&&&&&1
\end{pmatrix}.$$

The remaining ten positions need to be filled by five $e_1$'s and five $e_2$'s. Without the loss of generality, we will assume that in position $(3,4)$ we have $e_2$.

If the element in position $(4,5)$ is also $e_2$, then we have

$$x=\begin{pmatrix}
	1& e_3& e_3& e_3& * & *\\
	&1&*&*& *&*\\
	& &1&\boxed{e_2}&\boxed{e_3}&*\\
	&&&1& \boxed{e_2}&e_3\\
	&&&&1&*\\
	\wedge&&&&&1
\end{pmatrix}=-\begin{pmatrix}
	1& e_3& e_3& e_3& * & *\\
	&1&*&*& *&*\\
	& &1&e_2&e_2&*\\
	&&&1& e_3&e_3\\
	&&&&1&*\\
	\wedge&&&&&1
\end{pmatrix}-\boxed{\begin{pmatrix}
	1& e_3& \colorbox{orange}{$e_3$}& \colorbox{orange}{$e_3$}& * & *\\
	&1&*&*& *&*\\
	& &1&\colorbox{orange}{$e_3$}&e_2&*\\
	&&&1& e_2&e_3\\
	&&&&1&*\\
	\wedge&&&&&1
\end{pmatrix}}.$$

The first matrix has $\Gamma_3=H$ and the second is zero since $\Gamma_3$ has the cycle $(1,3,4)$, so the Lemma follows from our previous discussion.
If the element in position $(3,6)$ is $e_2$, the same relation, applied to face $(3,4,6)$, results in an identical situation. Thus, we may assume that $e_1$ is in positions $(4,5)$ and $(3,6)$.

If $e_1$ is in position $(5,6)$, by applying the triangle identity to the face $(4,5,6)$, we get
$$x=\begin{pmatrix}
	1& e_3& e_3& e_3& * & *\\
	&1&*&*& *&*\\
	& &1&e_2&e_3&e_1\\
	&&&1&\boxed{ e_1}&\boxed{e_3}\\
	&&&&1&\boxed{e_1}\\
	\wedge&&&&&1
\end{pmatrix}=-\begin{pmatrix}
	1& e_3& e_3& e_3& * & *\\
	&1&*&*& *&*\\
	& &1&e_2&e_3&e_1\\
	&&&1& e_1&e_1\\
	&&&&1&e_3\\
	\wedge&&&&&1
\end{pmatrix}-\boxed{\begin{pmatrix}
	1& e_3& \colorbox{orange}{$e_3$}& \colorbox{orange}{$e_3$}& * & *\\
	&1&*&*& *&*\\
	& &1&e_2&\colorbox{orange}{$e_3$}&e_1\\
	&&&1& \colorbox{orange}{$e_3$}&e_1\\
	&&&&1&e_1\\
	\wedge&&&&&1
\end{pmatrix}},$$
where the first matrix has $\Gamma_3=Y_6$, and the second one being zero since its $\Gamma_3$ has the $4$-cycle $(1,3,5,4)$. Again, we get that $x$ can be written as a multiple of $E_3$.

Thus, it remains to justify the case when $e_2$ is in position $(5,6)$, so $$x=\begin{pmatrix}
	1& e_3& e_3& e_3& * & *\\
	&1&*&*& *&*\\
	& &1&e_2&e_3&e_1\\
	&&&1& e_1&e_3\\
	&&&&1&e_2\\
	\wedge&&&&&1
\end{pmatrix}.$$


Now, using Lemma \ref{relemma1} (b), for the face $(3,4,5)$, we can express $x$ as a difference of five terms, one for each different permutation of the elements on positions $(3,4)$, $(3,5),$ and $(4,5)$.
\begin{eqnarray*}
x=\begin{pmatrix}
	1& e_3& e_3& e_3& * & *\\
	&1&*&*& *&*\\
	& &1&\boxed{e_2}&\boxed{e_3}&e_1\\
	&&&1& \boxed{e_1}&e_3\\
	&&&&1&e_2\\
	\wedge&&&&&1
\end{pmatrix}=-\begin{pmatrix}
	1& e_3& e_3& e_3& * & *\\
	&1&*&*& *&*\\
	& &1&e_2&e_1&e_1\\
	&&&1& \colorbox{green}{$e_3$}&e_3\\
	&&&&1&e_2\\
	\wedge&&&&&1
\end{pmatrix}-\boxed{\begin{pmatrix}
	1& e_3& \colorbox{orange}{$e_3$}& \colorbox{orange}{$e_3$}& * & *\\
	&1&*&*& *&*\\
	& &1&\colorbox{orange}{$e_3$}&e_2&e_1\\
	&&&1& e_1&e_3\\
	&&&&1&e_2\\
	\wedge&&&&&1
\end{pmatrix}}\\
-\begin{pmatrix}
	1& e_3& e_3& e_3& * & *\\
	&1&*&*& *&*\\
	& &1&e_1&e_2&e_1\\
	&&&1& \colorbox{green}{$e_3$}&e_3\\
	&&&&1&e_2\\
	\wedge&&&&&1
\end{pmatrix}
-\begin{pmatrix}
	1& e_3& e_3& e_3& * & *\\
	&1&*&*& *&*\\
	& &1&e_1&e_3&e_1\\
	&&&1& e_2&e_3\\
	&&&&1&e_2\\
	\wedge&&&&&1
\end{pmatrix}
-\boxed{\begin{pmatrix}
	1& e_3& \colorbox{orange}{$e_3$}& \colorbox{orange}{$e_3$}& * & *\\
	&1&*&*& *&*\\
	& &1&\colorbox{orange}{$e_3$}&e_1&e_1\\
	&&&1& e_2&e_3\\
	&&&&1&e_2\\
	\wedge&&&&&1
\end{pmatrix}}
\end{eqnarray*}
Note that when $e_3$ occupies position $(3,4)$, $\Gamma_3$ has the cycle $(1,3,4)$, so the corresponding two matrices are zero in  ${\Lambda}^{S^2}_{V_3}(7)$. When $e_3$ is in position $(4,5)$, we have $\Gamma_3=H_6$, so the corresponding two matrices are multiples of $E_3$. Finally, the last matrix is
$$
\begin{pmatrix}
	1& e_3& e_3& e_3& * & *\\
	&1&*&*& *&*\\
	& &1&e_1&e_3&e_1\\
	&&&1& e_2&e_3\\
	&&&&1&e_2\\
	\wedge&&&&&1
\end{pmatrix}
$$

Using the identity (\ref{equ1}) for face $(4, 5, 6)$ we have

\begin{eqnarray*}
\begin{pmatrix}
	1& e_3& e_3& e_3& * & *\\
	&1&*&*& *&*\\
	& &1&e_1&e_3&e_1\\
	&&&1& e_2&e_3\\
	&&&&1&e_2\\
	\wedge&&&&&1
\end{pmatrix}=-\begin{pmatrix}
	1& e_3& e_3& e_3& * & *\\
	&1&*&*& *&*\\
	& &1&e_1&e_3&e_1\\
	&&&1& e_2&e_2\\
	&&&&1&e_3\\
	\wedge&&&&&1
\end{pmatrix}-\boxed{\begin{pmatrix}
	1& e_3& \colorbox{orange}{$e_3$}& \colorbox{orange}{$e_3$}& * & *\\
	&1&*&*& *&*\\
	& &1&e_1&\colorbox{orange}{$e_3$}&e_1\\
	&&&1& \colorbox{orange}{$e_3$}&e_2\\
	&&&&1&e_2\\
	\wedge&&&&&1
\end{pmatrix}}
\end{eqnarray*}

For the first matrix $\Gamma_3=Y_6$ (which we already discussed), and for the second $\Gamma_3$ has the cycle $(1,3,5,4)$, thus the corresponding element is $0$.

\end{proof}

\begin{remark}
The results in this section could be used to prove that $dim({\Lambda}^{S^2}_{V_d}(2d+1))\leq 1$, for every $d\geq 1$. The only ingredient that is missing is showing that, modulo the equivalence relation defined by Lemma \ref{keylemma}, every homogeneous cycle-free partition of $K_{2d}$ is equivalent with a partition $(\Gamma_1,\dots, \Gamma_{d-1},\Gamma_d)$ such that $\Gamma_d$ is a twin star (i.e. there exists two vertices $t_1$ and $t_2$ such that every edge in $\Gamma_d$ is connected to $t_1$ or $t_2$).
\end{remark}

\section{A determinant like function}

\subsection{Revisiting the case $d=2$}
We denote by $\mathcal{P}^{h,cf}_d(K_{2d})$ the set of homogeneous, cycle-free $d$-partitions for $K_{2d}$. Notice that on  $\mathcal{P}_d^{h,cf}(K_{2d})$ there are two natural actions of the groups $S_{2d}$ and $S_d$. For $\sigma\in S_{2d}$,  and $(\Gamma_1,...,\Gamma_d)\in \mathcal{P}_d^{h,cf}(K_{2d})$, we define
$$\sigma*(\Gamma_1,...,\Gamma_d)=(\sigma*\Gamma_1,...,\sigma*\Gamma_d),$$
where for a subgraph $\Gamma$ of $K_{2d}$ with $E(\Gamma)=\{(i_1,j_1),...,(i_{2d-1},j_{2d-1})\}$ we take  the edges of $\sigma*\Gamma$ to be
$E(\sigma*\Gamma)=\{(\sigma(i_1),\sigma(j_1)),...,(\sigma(i_{2d-1}),\sigma(j_{2d-1}))\}$.

For $\tau\in S_d,$ we define $$\tau*(\Gamma_1,...,\Gamma_d)=(\Gamma_{\tau^{-1}(1)},...,\Gamma_{\tau^{-1}(d)}).$$
These two actions commute with each other, so they can be combined in an action of the group $S_{2d}\times S_d$ on $\mathcal{P}_d^{h,cf}(K_{2d})$.

Finally, from Lemma \ref{keylemma}, we know that for every $1\leq x\leq y\leq z\leq 2d$, there is an involution on $\mathcal{P}^{h,cf}_d(K_{2d})$ given by $$(\Gamma_1,...,\Gamma_d)\mapsto (\Gamma_1,...,\Gamma_d)^{(x,y,z)}.$$

When $d=2$ one can see that $\mathcal{P}^{h,cf}_2(K_{4})$ has $12$ elements. One of these elements is the partition $P_0=\Gamma(f)$  presented in Figure \ref{fig1}.  Notice that $$Stab_{S_4\times  S_2}(P_{0})=\{e_{S_4}\times e_{S_2}, (1,2)(3,4)\times e_{S_2}, (1,4,2,3)\times (1,2), (1,3,2,4)\times (1,2)\}.$$ 
In particular, this means that the orbit of $P_0$ has $12$ elements and so it is equal to $\mathcal{P}^{h,cf}_2(K_{4})$. One can define $\varepsilon_2^{S^2}: \mathcal{P}^{h,cf}_2(K_{4})\to \{1, -1\},$
determined by $$\varepsilon_2^{S^2}((\sigma\times \tau)\cdot P_0)=\varepsilon_{S_4}(\sigma)\varepsilon_{S_2}(\tau)
,$$ which is well defined because $\varepsilon_2^{S^2}(Stab_{S_4\times  S_2}(P_{0}))=1$.

With these notations we are ready to reformulate the definition of $det^{S^2}$ from Remark \ref{rem14}.
\begin{remark}
Take $v_{i,j}=\alpha_{i,j}e_1+\beta_{i,j}e_2\in V_2$ for all $1\leq i<j\leq 4$. For a partition $P=(\Gamma_1,\Gamma_2)\in \mathcal{P}^{h,cf}_2(K_{4})$ we define
$$M_{(\Gamma_1,\Gamma_2)}((v_{i,j})_{1\leq i<j\leq 4})=\prod_{(u_1,v_1)\in E(\Gamma_{1})}\alpha_{u_1,v_1}
\prod_{(u_2,v_2)\in E(\Gamma_{2})}\beta_{u_2,v_2}.$$ Next we take $Det^{S^2}:V_2^6\to k$ determined by
$$Det^{S^2}((v_{i,j})_{1\leq i<j\leq 4})=\sum_{(\Gamma_1,\Gamma_2)\in \mathcal{P}^{h,cf}_2(K_{4})} \varepsilon_2^{S^2}((\Gamma_1,\Gamma_2))M_{(\Gamma_1,\Gamma_2)}((v_{i,j})_{1\leq i<j\leq 4}).$$
The map $Det^{S^2}$ is the unique multi-linear map on $V_2^6$ with the property that $Det^{S^2}((v_{i,j})_{1\leq i<j\leq 4})=0$, if there exist $1\leq x<y<z\leq 4$ such that $v_{x,y}=v_{x,z}=v_{y,z}$,  and $Det^{S^2}(E_2)=1$.  With the notation from section one $Det^{S^2}$ induces the map $-det^{S^2}:V_2^{\otimes 6}\to k$.
\end{remark}

\subsection{Main Result}
The case $d=3$ is much more complicated. One can show that $\mathcal{P}_3^{h,cf}(K_{6})$ has $\num{66240}$ elements, so the action of $S_6\times S_3$ is not transitive anymore (an explicit description for the equivalence classes is given in the Appendix).  In the rest of the subsection we present an explicit construction of the $det^{S^2}$ map in the case $d=3$. First we need the following result. 

\begin{theorem} There exits a map $\varepsilon^{S^2}:\mathcal{P}_3^{h,cf}(K_{6})\to \{-1,1\}$ such that
$$\varepsilon_3^{S^2}((\Gamma_1,\Gamma_2,\Gamma_3)^{(x,y,z)})=-\varepsilon_3^{S^2}((\Gamma_1,\Gamma_2,\Gamma_3)),$$
for all $1\leq x\leq y\leq z\leq 6$. \label{th1A}
\end{theorem}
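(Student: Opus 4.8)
The plan is to recognize the statement as a two-colourability (bipartiteness) assertion and then resolve it using the $S_6\times S_3$-symmetry together with the explicit list of orbits. First I would reformulate the objects: every $(\Gamma_1,\Gamma_2,\Gamma_3)\in\mathcal{P}_3^{h,cf}(K_6)$ consists of three subgraphs with $5$ edges each that are cycle-free, hence by Lemma~\ref{lemmagraph} connected, so each $\Gamma_i$ is a spanning tree and a homogeneous cycle-free $3$-partition is exactly a decomposition of the $15$ edges of $K_6$ into three spanning trees. I would form the flip graph $\mathcal{H}$ whose vertices are the elements of $\mathcal{P}_3^{h,cf}(K_6)$ and whose edges join $P$ to $P^{(x,y,z)}$ for every face $(x,y,z)$; these are genuine edges since Lemma~\ref{keylemma} guarantees $P\neq P^{(x,y,z)}$. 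A map $\varepsilon_3^{S^2}$ as in the theorem is precisely a proper $2$-colouring of $\mathcal{H}$, so the theorem is equivalent to the assertion that $\mathcal{H}$ is bipartite, i.e.\ that every closed walk of flips has even length. At the outset I would record that the involutions are $G$-equivariant for $G=S_6\times S_3$: with the actions of the previous subsection one has $g\cdot\big(P^{(x,y,z)}\big)=(g\cdot P)^{g\cdot(x,y,z)}$.

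Before committing to a global argument I would test for a cheap $\mathbb{Z}/2$-invariant that flips under each involution. Inspecting Figures~\ref{fig6}, \ref{fig7}, \ref{fig8} shows that a single flip acts on the three colours of the face either as a transposition (Case~(I) and the two-colour case of Figure~\ref{fig8}) or as a $3$-cycle (Case~(II)); since a $3$-cycle is an even permutation, the naive candidate ``product over faces of the sign of the face-colouring relative to a fixed reference'' is left unchanged by some flips and therefore cannot serve. This failure of a purely local parity is exactly what forces a global argument.

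Mirroring the $d=2$ construction of $\varepsilon_2^{S^2}$, I would fix orbit representatives $P_1,\dots,P_{19}$ for the $G$-action (the Appendix provides them) and attempt to set $\varepsilon_3^{S^2}\big((\sigma\times\tau)\cdot P_k\big)=\operatorname{sgn}(\sigma)\operatorname{sgn}(\tau)\,\eta_k$ for signs $\eta_k\in\{\pm1\}$ to be chosen. This is well defined on the $k$-th orbit if and only if $\operatorname{sgn}\times\operatorname{sgn}$ is trivial on $\operatorname{Stab}_G(P_k)$, exactly the kind of stabiliser computation carried out for $P_0$ in the $d=2$ case; I would verify this for each of the $19$ representatives. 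Granting this, $\varepsilon_3^{S^2}(g\cdot R)=\operatorname{sgn}(\sigma)\operatorname{sgn}(\tau)\,\varepsilon_3^{S^2}(R)$, so by equivariance the anti-invariance $\varepsilon_3^{S^2}(P^{(x,y,z)})=-\varepsilon_3^{S^2}(P)$ needs to be checked only on the representatives: for each $P_k$ and each face $(x,y,z)$ one computes $Q=P_k^{(x,y,z)}$, locates the orbit $P_l$ containing $Q$ with $Q=(\sigma^{-1}\times\tau^{-1})\cdot P_l$, and the requirement becomes the sign constraint $\eta_k\eta_l=-\operatorname{sgn}(\sigma)\operatorname{sgn}(\tau)$.

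These constraints are the edges of a signed graph on the $19$ orbits, and a consistent choice of $\{\eta_k\}$ exists if and only if that signed graph is balanced, i.e.\ every cycle has edge-sign product $+1$ (equivalently $\mathcal{H}$ has no odd cycle). Verifying this balance is the final and, I expect, the hardest step: it is a finite but genuinely global check requiring both that no orbit receive two flip-constraints of opposite sign and that no longer cycle through several of the $19$ orbits multiply out to $-1$. Using the explicit generators of the orbits and the stabiliser action on the $\binom{6}{3}=20$ faces, one can cut the number of face-checks per orbit down sharply by symmetry, then choose within each connected component a spanning tree of sign-relations pinning the remaining $\eta_k$ relative to one base value and confirm that every leftover relation holds automatically; this mutual consistency is precisely what rules out an odd flip-cycle, and it is the computer-free counterpart of the MATLAB enumeration. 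I would finally note that the tempting shortcut of declaring $\varepsilon_3^{S^2}(P)$ to be the coefficient of $\hat{f}_P$ along $E_3$ is unavailable, since knowing these coefficients are nonzero presupposes $E_3\neq 0$, which is only established later using this very theorem.
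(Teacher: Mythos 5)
Your global strategy --- view the theorem as two-colourability of the flip graph, use the $S_6\times S_3$-equivariance $g\cdot\big(P^{(x,y,z)}\big)=(g\cdot P)^{g\cdot(x,y,z)}$ to reduce everything to the $19$ orbit representatives, define the sign via a character of $S_6\times S_3$ twisted by per-orbit constants $\eta_k$, and check consistency of the finitely many resulting sign constraints --- is sound, and it is essentially the MATLAB-free route the paper itself sketches in the Appendix (the paper's primary proof of Theorem~\ref{th1A} is an outright MATLAB enumeration of the $\num{66240}$ partitions). Your preliminary observations (each $\Gamma_i$ is a spanning tree; a purely local face-parity cannot work because Case~(II) flips act as $3$-cycles on the colours; the coefficient-of-$E_3$ shortcut is circular) are all correct.

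However, there is a concrete error that stops the proposal at its first checkpoint: the character you chose is the wrong one. Mirroring $d=2$, you set $\varepsilon_3^{S^2}\big((\sigma\times\tau)\cdot P_k\big)=\mathrm{sgn}(\sigma)\mathrm{sgn}(\tau)\,\eta_k$, which requires $\mathrm{sgn}(\sigma)\mathrm{sgn}(\tau)$ to be trivial on every stabilizer. This fails for several of the $19$ orbits: $\mathrm{Stab}_{S_6\times S_3}(P_2)$ contains $(1,6)(2,5)(3,4)\times e_{S_3}$ (odd $\sigma$, trivial $\tau$) and also $(1,2,4,6,5,3)\times(1,2,3)$ (odd times even), and likewise the stabilizers of $P_3$, $P_{10}$ and $P_{19}$ contain elements of total sign $-1$. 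No choice of the $\eta_k$ can repair this, since the obstruction lives inside a single orbit; your plan would stall exactly at the well-definedness verification you propose to carry out. The correct ansatz, and the one the paper uses (Remark~\ref{formulaeps}), drops the $S_6$ factor entirely: $\varepsilon_3^{S^2}\big((\sigma\times\tau)*P_i\big)=\mathrm{sgn}(\tau)$, which is well defined because every stabilizer element listed in the Appendix has $\tau\in A_3$. This asymmetry with $d=2$ is not incidental --- it matches Lemma~\ref{actions2n}, where $\sigma\rightharpoonup E_d=(\mathrm{sgn}(\sigma))^{d-1}E_d$ is $\sigma$-independent precisely when $d$ is odd. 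Once the character is corrected, your remaining step (for each representative and each face, write $P_i^{(x,y,z)}=(\sigma,\tau)*P_j$ and check the resulting sign constraint, which now reads ``$\tau$ is odd'') is exactly the finite verification recorded in the paper's final Appendix remark.
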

\begin{proof} This result was established by direct computation using MATLAB. There are $\num{756756}$ homogeneous partitions of $K_6$, out of which $\num{66240}$ are cycle-free. Lemma \ref{keylemma} played a key role in checking the existence of the function $\varepsilon_3^{S^2}$. A explicit description of the map $\varepsilon_3^{S^2}$ is given in the Appendix.
\end{proof}

\begin{remark} What is really intriguing about the map $\varepsilon_3^{S^2}$ is its compatibility with the  intrinsic  transformations $(x,y,z)$  defined on $\mathcal{P}^{h,cf}_3(K_{6})$.  This exhibits a natural orientation on the set of homogeneous cycle-free $3$-partitions of $K_6$.  To be more precise, once we pick a partition we can say if a second partition has the same orientation or not.
It is similar with the way the signature map $\varepsilon_n:S_n\to \{-1,1\}$ splits the set of permutations  $S_n$ into even and odd permutations.
\end{remark}

Let $(\Gamma_1,\Gamma_2,\Gamma_3)\in \mathcal{P}_3^{h,cf}(K_{6})$ and $v_{i,j}=\alpha_{i,j}e_1+\beta_{i,j}e_2+\gamma_{i,j}e_3\in V_3$, for all $1\leq i<j\leq 6$. We define
$$M_{(\Gamma_1,\Gamma_2,\Gamma_3)}((v_{i,j})_{1\leq i<j\leq 6})=\prod_{(u_1,v_1)\in E(\Gamma_{1})}\alpha_{u_1,v_1}
\prod_{(u_2,v_2)\in E(\Gamma_{2})}\beta_{u_2,v_2}\prod_{(u_3,v_3)\in E(\Gamma_{3})}\gamma_{u_3,v_3}.$$
Obviously $M_{(\Gamma_1,\Gamma_2,\Gamma_3)}$ is multi-linear so we get a map from $V_3^{\otimes 15}\to k$.  Notice that for any $d$-partition $(\Delta_1,\Delta_2,\Delta_3)$ (homogeneous or not)
$$M_{(\Gamma_1,\Gamma_2,\Gamma_3)}(f_{(\Delta_1,\Delta_2,\Delta_3)})=\delta_{(\Gamma_1,\Gamma_2,\Gamma_3),(\Delta_1,\Delta_2,\Delta_3)},$$ 
in other words $M_{(\Gamma_1,\Gamma_2,\Gamma_3)}$ is a subset of the dual basis of $\mathcal{G}_{\mathcal{B}_3}(7)$. 

\begin{definition} Define $Det^{S^2}:V^{15}\to k$, determined by
\begin{eqnarray}
Det^{S^2}((v_{i,j})_{1\leq i<j\leq 6})=\sum_{(\Gamma_1,\Gamma_2,\Gamma_3)\in \mathcal{P}^{h,cf}_3(K_{6})} \varepsilon_3^{S^2}((\Gamma_1,\Gamma_2,\Gamma_3))M_{(\Gamma_1,\Gamma_2,\Gamma_3)}((v_{i,j})_{1\leq i<j\leq 6}).
\label{detS2d3}
\end{eqnarray}
\end{definition}

\begin{lemma} The map $Det^{S^2}:V^{15}\to k$ is  $k$-multi-linear,  and  $Det^{S^2}((v_{i,j})_{1\leq i<j\leq 6})=0$ for all $(v_{i,j})_{1\leq i<j\leq 6}\in V^{15}$ with the property that there exist   $1\leq x<y<z\leq 6$ such that $v_{x,y}=v_{x,z}=v_{y,z}$.
\label{lemmadet3}
\end{lemma}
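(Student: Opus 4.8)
The plan is to prove the two assertions in turn, treating multilinearity first and the vanishing property second.

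For the multilinearity, I would observe that each individual monomial $M_{(\Gamma_1,\Gamma_2,\Gamma_3)}$ is already $k$-multilinear in the variables $(v_{i,j})_{1\le i<j\le 6}$. Indeed, since $(\Gamma_1,\Gamma_2,\Gamma_3)$ is a partition of the edge set of $K_6$, every edge $(i,j)$ lies in exactly one of the three subgraphs, so $M_{(\Gamma_1,\Gamma_2,\Gamma_3)}$ contributes for each $v_{i,j}$ exactly one factor, namely $\alpha_{i,j}$, $\beta_{i,j}$, or $\gamma_{i,j}$, each of which is a linear functional of $v_{i,j}$. Thus $M_{(\Gamma_1,\Gamma_2,\Gamma_3)}$ is a product of $15$ linear functionals, one in each variable, hence multilinear; and $Det^{S^2}$, being the finite $k$-linear combination of these monomials in (\ref{detS2d3}), is multilinear as well.

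For the vanishing property, fix $1\le x<y<z\le 6$ with $v_{x,y}=v_{x,z}=v_{y,z}$ and call this common vector $w$. The strategy is to pair every partition $P=(\Gamma_1,\Gamma_2,\Gamma_3)$ with its image $P^{(x,y,z)}$ under the involution of Lemma \ref{keylemma} and to show that each such pair contributes $0$ to the sum (\ref{detS2d3}). By Lemma \ref{keylemma} the assignment $P\mapsto P^{(x,y,z)}$ is a fixed-point-free involution on $\mathcal{P}_3^{h,cf}(K_6)$, fixed-point-free because $P$ and $P^{(x,y,z)}$ differ on at least two edges of the face, so it partitions $\mathcal{P}_3^{h,cf}(K_6)$ into two-element orbits. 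Within each orbit Theorem \ref{th1A} gives $\varepsilon_3^{S^2}(P^{(x,y,z)})=-\varepsilon_3^{S^2}(P)$, so it remains only to show that $M_P$ and $M_{P^{(x,y,z)}}$ take the same value at our chosen tuple $(v_{i,j})$.

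The crux is this last equality, and it is where the structure of the involution enters. By construction $P$ and $P^{(x,y,z)}$ assign the same subgraph to every edge outside the face $(x,y,z)$, so all factors of $M_P$ and $M_{P^{(x,y,z)}}$ coming from off-face edges coincide. For the three face edges, the key observation is that the involution merely permutes the three colors among $(x,y),(x,z),(y,z)$, preserving the multiset of colors on the face: since both $P$ and $P^{(x,y,z)}$ are homogeneous cycle-free partitions, each of their subgraphs is a spanning tree with exactly $2d-1=5$ edges (by the remark following Lemma \ref{generators}), and as the two partitions agree off the face, the number of face edges lying in each subgraph must be the same for both. Because $v_{x,y}=v_{x,z}=v_{y,z}=w$, the face contribution to $M_P$ is the commutative product of those coordinates of $w$ indexed by the colors appearing on the face, which depends only on this multiset and not on which edge receives which color. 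Hence $M_P((v_{i,j}))=M_{P^{(x,y,z)}}((v_{i,j}))$, so each orbit contributes $\varepsilon_3^{S^2}(P)M_P-\varepsilon_3^{S^2}(P)M_P=0$, and summing over the orbits yields $Det^{S^2}((v_{i,j}))=0$. The main obstacle is precisely justifying that the involution preserves the color-multiset on the face; the spanning-tree edge count above resolves it cleanly, and it can be double-checked against the explicit colorings in Figures \ref{fig6}, \ref{fig7} and \ref{fig8}.
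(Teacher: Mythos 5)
Your proposal is correct, but it takes a genuinely different route from the paper. The paper first uses multilinearity together with Remark \ref{remba} (hence Lemma \ref{relemma1}, which is where $char(k)\neq 2,3$ enters) to reduce the vanishing claim to checking that $Det^{S^2}$ kills the three types of basis-entry relations (\ref{equ02}), (\ref{equ12}), (\ref{equ22}); there, the dual-basis property $M_{P}(f_{\Delta})=\delta_{P,\Delta}$ makes each evaluation trivial: the term with a $3$-cycle contributes nothing, and in the remaining relations at most two terms are homogeneous and cycle-free, and these two are exactly a pair $P$, $P^{(x,y,z)}$ from Lemma \ref{keylemma}, which cancel by Theorem \ref{th1A}. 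You instead work directly with an arbitrary tuple satisfying $v_{x,y}=v_{x,z}=v_{y,z}=w$ and split the whole index set $\mathcal{P}^{h,cf}_3(K_6)$ into two-element orbits of the fixed-point-free involution $(x,y,z)$; the new ingredient you need, which the paper's route never requires, is that the involution preserves the multiset of colors on the face $(x,y,z)$, and your edge-counting argument (five edges per color, identical off-face edges, hence identical face-edge counts per color) establishes this cleanly, after which commutativity gives $M_P=M_{P^{(x,y,z)}}$ at the tuple and Theorem \ref{th1A} gives the sign cancellation. Your argument is more direct and uniform --- no basis expansion, no case analysis over relation types, and no appeal to Remark \ref{remba} or to the characteristic assumption at this step --- while the paper's version keeps the computation anchored to the explicit linear generators of the ideal ${\mathcal E}^{S^2}_{V_3}(7)$, so that the descent of $Det^{S^2}$ to ${\Lambda}^{S^2}_{V_3}(7)$ (used in Theorem \ref{th1B}) is visibly a restatement of what was checked. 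Both proofs rest on the same two pillars, Lemma \ref{keylemma} and Theorem \ref{th1A}.
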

\begin{proof} The fact that $Det^{S^2}$ is multi-linear follows because the monomial $M_{(\Gamma_1,\Gamma_2,\Gamma_3)}$ is linear in each $v_{i,j}$. This means that we get a map from  $V^{\otimes 15}$ to $k$, which by abuse of notation we denoted also by $Det^{S^2}$.

We only need to check that $Det^{S^2}$ is zero on the relations from Remark \ref{remba}. We will assume that all the $*$ entries are elements in the basis $\{e_1,e_2,e_3\}$, and so each term corresponds to a  
$3$-partition of the graph $K_{6}$. 

First consider a $3$-partition $(\Gamma_1,\Gamma_2, \Gamma_3)$ that has a cycle (not necessary of length $3$). 
Since in Equation \ref{detS2d3} we sum over cycle-free partition, we get that $Det^{S^2}(f_{(\Gamma_1,\Gamma_2, \Gamma_3)})=0$.  Relation (\ref{equ02}) corresponds to a $3$-partition of $K_{6}$ that has a $3$-cycle on the component $\Gamma_i$,  so $Det^{S^2}$  of the expression (\ref{equ02}) is zero.
\begin{eqnarray}\begin{pmatrix}
1&*&*&*&*&*\\
&*&\boxed{e_i}&*&\boxed{e_i}&*\\
&&*&*&*&*\\
&&&*&\boxed{e_i}&*\\
&&&&*&*\\
\otimes&&&&&1
\end{pmatrix}\label{equ02}
\end{eqnarray}

The terms in the relation (\ref{equ12})  correspond to three $3$-partitions of $K_6$ such that two edges of the face $(x,y,z)$ belong to $\Gamma_i$, and the third edge is in $\Gamma_j$. If all the corresponding partitions have a cycle, by the argument above, $Det^{S^2}$ will be zero on the expression (\ref{equ12}).  So we can assume that at least one of the terms in the expression  (\ref{equ12}) corresponds to a homogeneous cycle-free $3$-partition of $K_6$.  From Lemma \ref{keylemma} we know that there are exactly two such terms (which correspond to cycle-free $3$-partitions of $K_6$), let's call them  $(\Gamma_1, \Gamma_2,\Gamma_3)$ and
$(\Gamma_1, \Gamma_2,\Gamma_3)^{(x,y,z)}$. After evaluating $Det^{S^2}$ on the expression (\ref{equ12}) the only nonzero monomials are $M_{(\Gamma_1, \Gamma_2,\Gamma_3)}(f_{(\Gamma_1,\Gamma_2,\Gamma_3)})$ and $M_{(\Gamma_1, \Gamma_2,\Gamma_3)^{(x,y,z)}}(f_{(\Gamma_1, \Gamma_2,\Gamma_3)^{(x,y,z)}})$ (which are both equal to $1$). Finally, since $\varepsilon_3^{S^2}((\Gamma_1, \Gamma_2,\Gamma_3)^{(x,y,z)})=-\varepsilon_3^{S^2}((\Gamma_1, \Gamma_2,\Gamma_3))$, the two monomials will cancel each other, so $Det^{S^2}$ of the expression (\ref{equ12}) is zero.

\begin{eqnarray}\begin{pmatrix}
1&*&*&*&*&*\\
&*&\boxed{e_i}&*&\boxed{e_i}&*\\
&&*&*&*&*\\
&&&*&\boxed{e_j}&*\\
&&&&*&*\\
\otimes&&&&&1
\end{pmatrix}+
\begin{pmatrix}
1&*&*&*&*&*\\
&*&\boxed{e_i}&*&\boxed{e_j}&*\\
&&*&*&*&*\\
&&&*&\boxed{e_i}&*\\
&&&&*&*\\
\otimes&&&&&1
\end{pmatrix}+\begin{pmatrix}
1&*&*&*&*&*\\
&*&\boxed{e_j}&*&\boxed{e_i}&*\\
&&*&*&*&*\\
&&&*&\boxed{e_i}&*\\
&&&&*&*\\
\otimes&&&&&1
\end{pmatrix}\label{equ12}
\end{eqnarray}

The terms in the relation (\ref{equ22})  correspond to six $3$-partitions of $K_6$ such that the three edges of the face $(x,y,z)$ belong to distinct $\Gamma_i$, $\Gamma_j$ and $\Gamma_k$.  Just like above we can assume that at least one of the terms in the expression  (\ref{equ22}) corresponds to a homogeneous cycle-free $3$-partition of $K_6$.  From Lemma \ref{keylemma} we know that there are exactly two such terms, let's call them  $(\Gamma_1, \Gamma_2,\Gamma_3)$ and $(\Gamma_1, \Gamma_2,\Gamma_3)^{(x,y,z)}$. From here, the argument follows exactly like in the previous case and we get that  $Det^{S^2}$ of the expression (\ref{equ22}) is zero. 

\begin{eqnarray}\begin{pmatrix}
1&*&*&*&*&*\\
&*&\boxed{e_i}&*&\boxed{e_j}&*\\
&&*&*&*&*\\
&&&*&\boxed{e_k}&*\\
&&&&*&*\\
\otimes&&&&&1
\end{pmatrix}+
\begin{pmatrix}
1&*&*&*&*&*\\
&*&\boxed{e_i}&*&\boxed{e_k}&*\\
&&*&*&*&*\\
&&&*&\boxed{e_j}&*\\
&&&&*&*\\
\otimes&&&&&1
\end{pmatrix}+\begin{pmatrix}
1&*&*&*&*&*\\
&*&\boxed{e_j}&*&\boxed{e_i}&*\\
&&*&*&*&*\\
&&&*&\boxed{e_k}&*\\
&&&&*&*\\
\otimes&&&&&1
\end{pmatrix}\nonumber\\
\label{equ22} \\
+\begin{pmatrix}
1&*&*&*&*&*\\
&*&\boxed{e_k}&*&\boxed{e_i}&*\\
&&*&*&*&*\\
&&&*&\boxed{e_j}&*\\
&&&&*&*\\
\otimes&&&&&1
\end{pmatrix}+
\begin{pmatrix}
1&*&*&*&*&*\\
&*&\boxed{e_j}&*&\boxed{e_k}&*\\
&&*&*&*&*\\
&&&*&\boxed{e_i}&*\\
&&&&*&*\\
\otimes&&&&&1
\end{pmatrix}+\begin{pmatrix}
1&*&*&*&*&*\\
&*&\boxed{e_k}&*&\boxed{e_j}&*\\
&&*&*&*&*\\
&&&*&\boxed{e_i}&*\\
&&&&*&*\\
\otimes&&&&&1
\end{pmatrix}\nonumber
\end{eqnarray}

\end{proof}

To summarize, we proved the  conjecture from \cite{sta2} for vector spaces of dimension $3$.
\begin{theorem} Let $V_3$ be a vector space such that $dim_k(V_3)=3$. Then $dim_k({\Lambda}^{S^2}_{V_3}(7))=1$.
\label{th1B}
\end{theorem}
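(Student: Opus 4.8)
The plan is to combine the upper bound already in hand with a lower bound coming from the determinant-like map, the point being to exhibit $E_3$ as a genuinely nonzero element. Proposition \ref{dimless1} gives $\dim_k({\Lambda}^{S^2}_{V_3}(7)) \leq 1$ together with the stronger statement that every element of ${\Lambda}^{S^2}_{V_3}(7)$ is a scalar multiple of $E_3$. Consequently it suffices to prove $E_3 \neq 0$: this forces the generating set $\{E_3\}$ to be linearly independent, so the dimension is exactly $1$.

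To establish $E_3 \neq 0$, I would exploit the map $Det^{S^2}\colon V_3^{\otimes 15} \to k$ of Definition (\ref{detS2d3}). By Lemma \ref{lemmadet3} this map annihilates each generator of the ideal ${\mathcal E}^{S^2}_{V_3}(7)$ listed in Remark \ref{remba}; since $Det^{S^2}$ is $k$-multilinear, hence a linear functional on $V_3^{\otimes 15}$, its vanishing on those generators propagates to vanishing on all of ${\mathcal E}^{S^2}_{V_3}(7)$. Therefore $Det^{S^2}$ descends to a well-defined linear functional $\overline{Det^{S^2}}\colon {\Lambda}^{S^2}_{V_3}(7) \to k$ on the quotient.

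Next I would identify $E_3$ with its underlying $3$-partition $(\Gamma_1,\Gamma_2,\Gamma_3)$ of $K_6$, reading the colors off the defining matrix so that $\Gamma_1,\Gamma_2,\Gamma_3$ consist of the edges carrying $e_1,e_2,e_3$ respectively. A direct inspection shows each $\Gamma_i$ has exactly five edges and is a twin-star (of type $H_6$), so $(\Gamma_1,\Gamma_2,\Gamma_3) \in \mathcal{P}_3^{h,cf}(K_6)$ is homogeneous and cycle-free. Because the monomials $M_{(\Delta_1,\Delta_2,\Delta_3)}$ form part of the dual basis of $\mathcal{G}_{\mathcal{B}_3}(7)$, evaluating the defining sum (\ref{detS2d3}) on the representative $f_{(\Gamma_1,\Gamma_2,\Gamma_3)}$ leaves a single surviving term, giving
$$\overline{Det^{S^2}}(E_3) = Det^{S^2}(f_{(\Gamma_1,\Gamma_2,\Gamma_3)}) = \varepsilon_3^{S^2}((\Gamma_1,\Gamma_2,\Gamma_3)) = \pm 1 \neq 0.$$
Hence $E_3 \neq 0$ in ${\Lambda}^{S^2}_{V_3}(7)$, so $\dim_k({\Lambda}^{S^2}_{V_3}(7)) \geq 1$, and together with Proposition \ref{dimless1} this yields $\dim_k({\Lambda}^{S^2}_{V_3}(7)) = 1$.

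The substantive content is borrowed from earlier results: the upper bound rests on the cycle-reduction and twin-star arguments of Proposition \ref{dimless1}, while the existence of a signature map $\varepsilon_3^{S^2}$ compatible with the involutions of Lemma \ref{keylemma} is precisely Theorem \ref{th1A}. The only points demanding care in this final assembly are the well-definedness step, namely that vanishing on the listed generators of ${\mathcal E}^{S^2}_{V_3}(7)$ truly extends by linearity to the whole ideal, and the verification that the partition underlying $E_3$ is itself homogeneous and cycle-free, so that its coefficient $\varepsilon_3^{S^2}((\Gamma_1,\Gamma_2,\Gamma_3))$ is actually recorded in the sum defining $Det^{S^2}$ and thus detects $E_3$ as nonzero.
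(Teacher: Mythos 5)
Your proposal is correct and follows essentially the same route as the paper: the upper bound from Proposition \ref{dimless1}, plus the observation that $Det^{S^2}$ descends via Lemma \ref{lemmadet3} to a linear functional on ${\Lambda}^{S^2}_{V_3}(7)$ that is nonzero on $E_3$ (whose underlying partition is indeed a homogeneous cycle-free triple of twin-stars, so exactly one monomial survives). The only difference is that you spell out the well-definedness and the evaluation $\overline{Det^{S^2}}(E_3)=\pm 1$, details the paper's proof leaves implicit.
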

\begin{proof} From Lemma \ref{lemmadet3} we know that there exists a $k$-linear $det^{S^2}:{\Lambda}^{S^2}_{V_3}(7)\to k$ that corresponds to $Det^{S^2}$. The map $det^{S^2}$ is nontrivial since  $det^{S^2}(E_3)=1$, so we must have that $dim({\Lambda}^{S^2}_{V_3}(7))\geq 1$. Combining this with  Lemma \ref{dimless1}, we get our statement.
\end{proof}

\subsection{Some Remarks}

From Lemma \ref{keylemma}, we know that for  each $1\leq x< y< z\leq 2d$, there exists an involution $(x,y,x): \mathcal{P}^{h,cf}_d(K_{2d})\to \mathcal{P}^{h,cf}_d(K_{2d})$ defined by
$$(\Gamma_1,...,\Gamma_d)\mapsto (\Gamma_1,...,\Gamma_d)^{(x,y,z)}.$$
We consider the subgroup $G^{h,cf}_d$ of the symmetric group $S(\mathcal{P}_3^{h,cf}(K_{6}))$ generated by the involutions $(x,y,z)$ for all $1\leq x<y<z\leq 2d$. For example,  $G^{h,cf}_2$ is a subgroup of $S_{12}$ and
$G^{h,cf}_3$ is a subgroup in $S_{66240}$.

We have an obvious action of $G^{h,cf}_d$ on $\mathcal{P}^{h,cf}_d(K_{2d})$ which is induced by be the inclusion of $G^{h,cf}_d$ in $S(\mathcal{P}_3^{h,cf}(K_{6}))$. With this notation, we are now ready to reformulate the Conjecture from \cite{sta2}.

\begin{remark} Let $V_d$ a $k$-vector space such that $dim(V_d)=d$. \\
(i) If $G^{h,cf}_d$ acts transitively on $\mathcal{P}^{h,cf}_d(K_{2d})$ then $dim_k({\Lambda}^{S^2}_{V_d}(2d+1))\leq 1$.\\
(ii)  Suppose that exists a map $\varepsilon_d^{S^2}:\mathcal{P}_3^{h,cf}(K_{2d})\to \{-1,1\}$ such that
$$\varepsilon_d^{S^2}((\Gamma_1,...,\Gamma_d)^{(x,y,z)})=-\varepsilon((\Gamma_1,...,\Gamma_d)),$$
for all $1\leq x<y<z\leq 2d$. Then $dim_k({\Lambda}^{S^2}_{V_d}(2d+1))\geq 1$. \label{rem51}
\end{remark}
\begin{proof}
(i) It follows from the definition of  $G^{h,cf}_d$ and discussion in the previous section.

(ii)  If $v_{s,t}=\alpha_{s,t}^1e_1+\alpha_{s,t}^2e_2+...+\alpha_{s,t}^de_d\in V_d$ we define
$$M_{(\Gamma_1,...,\Gamma_3)}((v_{i,j})_{1\leq i<j\leq 2d})=\prod_{p=1}^d(\prod_{(u_p,v_p)\in E(\Gamma_{p})}\alpha_{u_p,v_p}^p).$$

Take the map $Det^{S^2}:V_d^{d(2d-1)}\to k$ determined by
\begin{equation}
Det^{S^2}((v_{i,j})_{1\leq i<j\leq 2d})=\sum_{(\Gamma_1,...,\Gamma_d)\in \mathcal{P}^{h,cf}_d(K_{2d})} \varepsilon_d^{S^2}((\Gamma_1,...,\Gamma_d))M_{(\Gamma_1,...,\Gamma_d)}((v_{i,j})_{1\leq i<j\leq 2d}).
\label{detS2d}
\end{equation}
Under the above assumptions, just like in Lemma \ref{lemmadet3}, it follows that $Det^{S^2}((v_{i,j})_{1\leq i<j\leq 2d})=0$ for all $(v_{i,j})_{1\leq i<j\leq 2d}\in V_d^{d(2d-1)}$ with the property that there exist $1\leq x<y<z\leq 2d$ such that $v_{x,y}=v_{x,z}=v_{y,z}$. This implies that ${\Lambda}^{S^2}_{V_d}(2d+1)\neq 0$.
\end{proof}

\begin{remark}
Notice that formula \ref{detS2d} is  based on edge partitions  of $K_{2d}$, so it is natural to ask what is the equivalent of this construction  if we consider vertex partitions for $K_d$. As one  probably expects, we  get the usual formula for the determinant of a square matrix.

Indeed, a vertex homogeneous $d$-partition of the graph $K_d$ is nothing else but an ordered partition of the set $\{1,2,...,d\}$ or, even better, a permutation of the set $\{1,2,...,d\}$. So one can think of  $\mathcal{P}^{h,cf}_d(K_{2d})$ as a generalization for $\mathcal{P}^{vt}_d(K_{d})=S_d$.  The action of $(x,y,z)$ on $\mathcal{P}^{h,cf}_d(K_{2d})$ can also be seen as a generalization of the action of a transposition $(x,y)$ on $\mathcal{P}^{vt}_d(K_{d})=S_d$. Finally, the map $\varepsilon_d^{S^2}:\mathcal{P}_3^{h,cf}(K_{2d})\to \{-1,1\}$ is replacing the usual signature map $\varepsilon_d:S_d\to \{-1,1\}$. Therefore, $det^{S^2}$ map is a natural generalization of the usual determinant.

It is natural to ask if this construction can be generalized to higher dimensional spheres $S^n$. Such a result would hinge on a generalization of Lemma \ref{keylemma}, for example when $n=3$ we would need to investigate homogeneous sphere-free face $d$-partitions of the complete graph $K_{3d}$
\end{remark}

\appendix

\maketitle

\section{Homogeneous, cycle-free $3$-partitions of $K_6$ }
\subsection{Equivalence classes under the $S_6\times S_3$ action}
Using MATLAB we established  that there are $\num{756756}$ homogeneous partitions of $K_6$, out of which $\num{66240}$ are cycle-free. In this section we present a system of representatives for the equivalence relation determined by the action of  $S_6\times S_3$ on $\mathcal{P}_3^{h,cf}(K_{6})$. For each partition we compute its stabilizer, and so we can find the number of elements in that equivalence class. We also give the values of the map $\varepsilon^{S^2}$ from Theorem \ref{th1A}.

If $P=(\Gamma_1,\Gamma_2,\Gamma_3)$ is a homogeneous cycle-free partition of $K_6$, the graphs $\Gamma_i$ must be of type $I_6$, $E_6$, $Y_6$, or $H_6$. We organize the partitions according to their type. For example, we say that $P$ is of type $(I_6,I_6,E_6)$ if $\Gamma_1$ and $\Gamma_2$ are of type $I_6$ and $\Gamma_3$ is of type $E_6$. Notice that there are some types which cannot be realized as a partition (for example  there are no partitions of type $(Y_6,Y_6,Y_6)$.

We give details of the computations for the cases $(I_6, I_6, I_6)$, $(I_6, I_6, E_6)$, and $(I_6, Y_6, E_6)$. The reader could use these ideas to analyze all 19 equivalence classes. One should keep in mind that these results were obtained computationally, so not all details have an elegant theoretical explanation.

\subsubsection{Type $(I_6, I_6, I_6)$}

We present three partitions of the type $(I_6, I_6, I_6)$, calculate their stabilizer  and show that they  generate 5760 homogeneous cycle-free partitions.

Take $P_1=(\Gamma_1^{(1)},\Gamma_2^{(1)},\Gamma_3^{(1)})$ as in Figure \ref{fig11}.
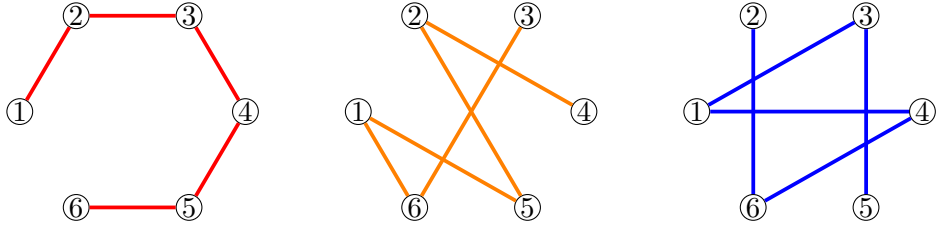
\begin{figure}[h!]
\centering
\begin{tikzpicture}
  [scale=1.5,auto=left,every node/.style={shape = circle, draw, fill = white,minimum size = 1pt, inner sep=0.3pt}]
  \node (n1) at (0,0) {1};
  \node (n2) at (0.5,0.85)  {2};
  \node (n3) at (1.5,0.85)  {3};
  \node (n4) at (2,0)  {4};
	\node (n5) at (1.5,-0.85)  {5};
  \node (n6) at (0.5,-0.85)  {6};
  \foreach \from/\to in {n1/n2,n2/n3,n3/n4,n4/n5,n5/n6}
    \draw[line width=0.5mm,red]  (\from) -- (\to);	
    \node (n11) at (3,0) {1};
  \node (n21) at (3.5,0.85)  {2};
  \node (n31) at (4.5,0.85)  {3};
  \node (n41) at (5,0)  {4};
	\node (n51) at (4.5,-0.85)  {5};
  \node (n61) at (3.5,-0.85)  {6};
  \foreach \from/\to in {n11/n51,n11/n61,n21/n41,n21/n51,n31/n61}
  \draw[line width=0.5mm,orange]  (\from) -- (\to);	
	
	\node (n12) at (6,0) {1};
  \node (n22) at (6.5,0.85)  {2};
  \node (n32) at (7.5,0.85)  {3};
  \node (n42) at (8,0)  {4};
	\node (n52) at (7.5,-0.85)  {5};
  \node (n62) at (6.5,-0.85)  {6};
  \foreach \from/\to in {n12/n32,n12/n42,n22/n62,n32/n52,n42/n62}
    \draw[line width=0.5mm,blue]  (\from) -- (\to);	

\end{tikzpicture}
\caption{$P_1=(\Gamma_1^{(1)},\Gamma_2^{(1)},\Gamma_3^{(1)})$  of type $(I_6,I_6,I_6)$ } \label{fig11}
\end{figure}

We have that $$Stab_{S_6\times S_3}((\Gamma_1^{(1)},\Gamma_2^{(1)},\Gamma_3^{(1)}))=\{e_{S_6}\times e_{S_3}\}.$$

Indeed, if $(\sigma,\tau)P_1=P_1$, then $\sigma\Gamma_i^{(1)}=\Gamma_{\tau(i)}^{(1)}$, for all $1\leq i\leq 3$.
This implies that if $\tau(1)=1$, then $\sigma\in\{e_{S_6}, (1,6)(2,5)(3,4)\}$; if $\tau(2)=1$ we have
$\sigma\in\{(1,3)(2,6,4,5),(1,4)(3,5,6)\}$; and if $\tau(3)=1$, then $\sigma\in \{(1,2,6,5,3,4),(1,5,6,2,3)\}$. Direct computations show that out of the twelve possibilities, the only element in $ Stab_{S_6\times S_3}((\Gamma_1^{(1)},\Gamma_2^{(1)},\Gamma_3^{(1)}))$ is $e_{S_6}\times e_{S_3}$. We get that the orbit of $P_1$ has $4320$ elements.

Take $P_2=(\Gamma_1^{(2)},\Gamma_2^{(2)},\Gamma_3^{(2)})$ as in Figure \ref{III2}.

\begin{figure}[h]
\centering
\begin{tikzpicture}
  [scale=1.5,auto=left,every node/.style={shape = circle, draw, fill = white,minimum size = 1pt, inner sep=0.3pt}]
  \node (n1) at (0,0) {1};
  \node (n2) at (0.5,0.85)  {2};
  \node (n3) at (1.5,0.85)  {3};
  \node (n4) at (2,0)  {4};
	\node (n5) at (1.5,-0.85)  {5};
  \node (n6) at (0.5,-0.85)  {6};
  \foreach \from/\to in {n1/n2,n2/n3,n3/n4,n4/n5,n5/n6}
    \draw[line width=0.5mm,red]  (\from) -- (\to);	
    \node (n11) at (3,0) {1};
  \node (n21) at (3.5,0.85)  {2};
  \node (n31) at (4.5,0.85)  {3};
  \node (n41) at (5,0)  {4};
	\node (n51) at (4.5,-0.85)  {5};
  \node (n61) at (3.5,-0.85)  {6};
  \foreach \from/\to in {n11/n41,n11/n61,n21/n41,n31/n51,n31/n61}
  \draw[line width=0.5mm,orange]  (\from) -- (\to);	
	
	\node (n12) at (6,0) {1};
  \node (n22) at (6.5,0.85)  {2};
  \node (n32) at (7.5,0.85)  {3};
  \node (n42) at (8,0)  {4};
	\node (n52) at (7.5,-0.85)  {5};
  \node (n62) at (6.5,-0.85)  {6};
  \foreach \from/\to in {n12/n32,n12/n52,n22/n52,n22/n62,n42/n62}
    \draw[line width=0.5mm,blue]  (\from) -- (\to);	

\end{tikzpicture}
\caption{$P_2=(\Gamma_1^{(2)},\Gamma_2^{(2)},\Gamma_3^{(2)})$  of type $(I_6,I_6,I_6)$ } \label{III2}
\end{figure}
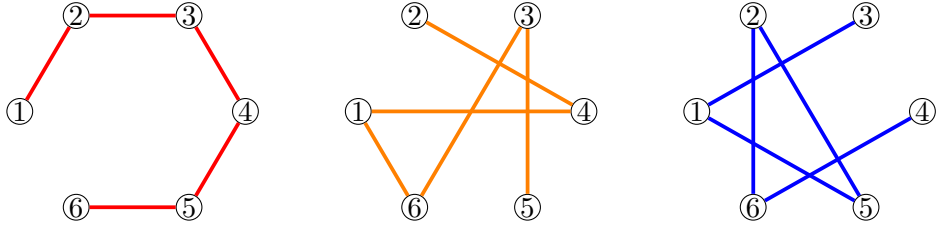

If $(\sigma,\tau)P_2=P_2$, then $\sigma\Gamma_i^{(2)}=\Gamma_{\tau(i)}^{(2)}$, for all $1\leq i\leq 3$. Again we have three possibilities, if $\tau(1)=1,$ then $\sigma\in\{e_{S_6}, (1,6)(2,5)(3,4)\}$; if $\tau(1)=2$, then $\sigma\in\{(1,2,4,6,5,3), (1,5,4)(2,3,6)\}$;  if $\tau(3)=1$, then $\sigma\in\{(1,3,5,6,4,2), (1,4,5)(2,6,3)\}$. In this case, we get
\begin{eqnarray*} &Stab_{S_6\times S_3}((\Gamma_1^{(2)},\Gamma_2^{(2)},\Gamma_3^{(2)}))=\{e_{S_6}\times e_{S_3}, (1,6)(2,5)(3,4)\times e_{S_3}, (1,2,4,6,5,3)\times(1,2,3),&\\
& (1,5,4)(2,3,6)\times (1,2,3), (1,3,5,6,4,2) \times (1,3,2), (1,4,5)(2,6,3)\times (1,3,2)\},&
\end{eqnarray*}
so the orbit of $P_2$ has $720$ elements.

Take  $P_3=(\Gamma_1^{(3)},\Gamma_2^{(3)},\Gamma_3^{(3)})$ as in Figure \ref{III3}.

\begin{figure}[h!]
\centering
\begin{tikzpicture}
  [scale=1.5,auto=left,every node/.style={shape = circle, draw, fill = white,minimum size = 1pt, inner sep=0.3pt}]
  \node (n1) at (0,0) {1};
  \node (n2) at (0.5,0.85)  {2};
  \node (n3) at (1.5,0.85)  {3};
  \node (n4) at (2,0)  {4};
	\node (n5) at (1.5,-0.85)  {5};
  \node (n6) at (0.5,-0.85)  {6};
  \foreach \from/\to in {n1/n2,n2/n3,n3/n4,n4/n5,n5/n6}
    \draw[line width=0.5mm,red]  (\from) -- (\to);	
    \node (n11) at (3,0) {1};
  \node (n21) at (3.5,0.85)  {2};
  \node (n31) at (4.5,0.85)  {3};
  \node (n41) at (5,0)  {4};
	\node (n51) at (4.5,-0.85)  {5};
  \node (n61) at (3.5,-0.85)  {6};
  \foreach \from/\to in {n11/n31,n11/n61,n21/n41,n31/n51,n41/n61}
  \draw[line width=0.5mm,orange]  (\from) -- (\to);	
	
	\node (n12) at (6,0) {1};
  \node (n22) at (6.5,0.85)  {2};
  \node (n32) at (7.5,0.85)  {3};
  \node (n42) at (8,0)  {4};
	\node (n52) at (7.5,-0.85)  {5};
  \node (n62) at (6.5,-0.85)  {6};
  \foreach \from/\to in {n12/n42,n12/n52,n22/n52,n22/n62,n32/n62}
    \draw[line width=0.5mm,blue]  (\from) -- (\to);	

\end{tikzpicture}
\caption{$P_3=(\Gamma_1^{(3)},\Gamma_2^{(3)},\Gamma_3^{(3)})$  of type $(I_6,I_6,I_6)$ } \label{III3}
\end{figure}

If $\tau(1)=1,$ then $\sigma\in\{e_{S_6}, (1,6)(2,5)(3,4)\}$; if $\tau(2)=1$, we have $\sigma\in\{(1,5,4,6,2,3), (1,2,4)(3,6,5)\}$;  if $\tau(3)=1$, then $\sigma\in\{(1,3,2,6,4,5), (1,4,2)(3,5,6)\}$. In this case, we get
\begin{eqnarray*} &Stab_{S_6\times S_3}((\Gamma_1^{(3)},\Gamma_2^{(3)},\Gamma_3^{(3)}))=\{e_{S_6}\times e_{S_3}, (1,6)(2,5)(3,4)\times e_{S_3}, (1,5,4,6,2,3)\times(1,2,3),& \\&(1,2,4)(3,6,5)\times (1,2,3), (1,3,2,6,4,5) \times (1,3,2), (1,4,2)(3,5,6)\times (1,3,2)\},&
\end{eqnarray*}
so the orbit of $P_3$ has $720$ elements. In summary, with the case $(I_6, I_6, I_6)$, we can generate 5760 homogeneous cycle-free partitions.

\subsubsection{Type $(I_6, I_6, E_6)$}

Take $P_4=(\Gamma_1^{(4)},\Gamma_2^{(4)},\Gamma_3^{(4)})$ as in Figure \ref{IIE1}.\\

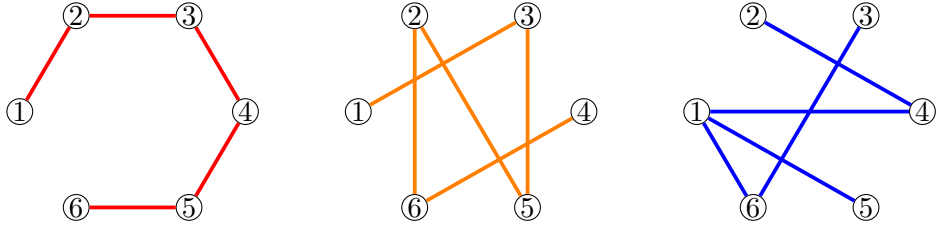
\begin{figure}[h]
\centering
\begin{tikzpicture}
  [scale=1.5,auto=left,every node/.style={shape = circle, draw, fill = white,minimum size = 1pt, inner sep=0.3pt}]
  \node (n1) at (0,0) {1};
  \node (n2) at (0.5,0.85)  {2};
  \node (n3) at (1.5,0.85)  {3};
  \node (n4) at (2,0)  {4};
	\node (n5) at (1.5,-0.85)  {5};
  \node (n6) at (0.5,-0.85)  {6};
  \foreach \from/\to in {n1/n2,n2/n3,n3/n4,n4/n5,n5/n6}
    \draw[line width=0.5mm,red]  (\from) -- (\to);	
    \node (n11) at (3,0) {1};
  \node (n21) at (3.5,0.85)  {2};
  \node (n31) at (4.5,0.85)  {3};
  \node (n41) at (5,0)  {4};
	\node (n51) at (4.5,-0.85)  {5};
  \node (n61) at (3.5,-0.85)  {6};
  \foreach \from/\to in {n11/n31,n31/n51,n51/n21,n21/n61,n41/n61}
  \draw[line width=0.5mm,orange]  (\from) -- (\to);	
	
	\node (n12) at (6,0) {1};
  \node (n22) at (6.5,0.85)  {2};
  \node (n32) at (7.5,0.85)  {3};
  \node (n42) at (8,0)  {4};
	\node (n52) at (7.5,-0.85)  {5};
  \node (n62) at (6.5,-0.85)  {6};
  \foreach \from/\to in {n12/n42,n12/n52,n12/n62,n22/n42,n32/n62}
    \draw[line width=0.5mm,blue]  (\from) -- (\to);	

\end{tikzpicture}
\caption{$P_4=(\Gamma_1^{(4)},\Gamma_2^{(4)},\Gamma_3^{(4)})$  of type $(I_6,I_6,E_6)$} \label{IIE1}
\end{figure}

If $\tau(1)=1$, then $\sigma\in\{e_{S_6}, (1,6)(2,5)(3,4)\}$; if $\tau(2)=1$, we have $\sigma\in\{(2,3,5,6), (1,5,3,4,6)\}$. Since $I$ and $E$ are different types, $\tau(3)=1$ is not possible. A direct computation shows that
$$Stab_{S_6\times S_3}((\Gamma_1^{(4)},\Gamma_2^{(4)},\Gamma_3^{(4)}))=\{e_{S_6}\times e_{S_3}\}.$$
This implies that the orbit of $P_4$ has $4320$ elements.

Take $P_5=(\Gamma_1^{(5)},\Gamma_2^{(5)},\Gamma_3^{(5)})$ as in Figure \ref{IIE2}.

\begin{figure}[h]
\centering
\begin{tikzpicture}
  [scale=1.5,auto=left,every node/.style={shape = circle, draw, fill = white,minimum size = 1pt, inner sep=0.3pt}]
  \node (n1) at (0,0) {1};
  \node (n2) at (0.5,0.85)  {2};
  \node (n3) at (1.5,0.85)  {3};
  \node (n4) at (2,0)  {4};
	\node (n5) at (1.5,-0.85)  {5};
  \node (n6) at (0.5,-0.85)  {6};
  \foreach \from/\to in {n1/n2,n2/n3,n3/n4,n4/n5,n5/n6}
    \draw[line width=0.5mm,red]  (\from) -- (\to);	
    \node (n11) at (3,0) {1};
  \node (n21) at (3.5,0.85)  {2};
  \node (n31) at (4.5,0.85)  {3};
  \node (n41) at (5,0)  {4};
	\node (n51) at (4.5,-0.85)  {5};
  \node (n61) at (3.5,-0.85)  {6};
  \foreach \from/\to in {n11/n31,n21/n41,n21/n51,n31/n61,n41/n61}
  \draw[line width=0.5mm,orange]  (\from) -- (\to);	
	
	\node (n12) at (6,0) {1};
  \node (n22) at (6.5,0.85)  {2};
  \node (n32) at (7.5,0.85)  {3};
  \node (n42) at (8,0)  {4};
	\node (n52) at (7.5,-0.85)  {5};
  \node (n62) at (6.5,-0.85)  {6};
  \foreach \from/\to in {n12/n42,n12/n52,n12/n62,n22/n62,n32/n52}
    \draw[line width=0.5mm,blue]  (\from) -- (\to);	

\end{tikzpicture}
\caption{$P_5=(\Gamma_1^{(5)},\Gamma_2^{(4)},\Gamma_3^{(5)})$  of type $(I_6,I_6,E_6)$} \label{IIE2}
\end{figure}

If $\tau(1)=1$, then $\sigma\in\{e_{S_6}, (1,6)(2,5)(3,4)\}$; if $\tau(2)=1$, we have $\sigma\in\{(1,4,5,3,2,6), (2,3,5,6,4)\}.$ It follows easily that
\begin{eqnarray*} Stab_{S_6\times S_3}((\Gamma_1^{(5)},\Gamma_2^{(5)},\Gamma_3^{(5)}))=\{e_{S_6}\times e_{S_3}\}.\end{eqnarray*}
This implies that the orbit of $P_5$ has $4320$ elements. In summary, with the case $(I_6, I_6, E_6)$, we generate $8640$ homogeneous cycle-free partitions.

\subsubsection{Type $(I_6, E_6, Y_6)$}
Take $P_6=(\Gamma_1^{(6)},\Gamma_2^{(6)},\Gamma_3^{(6)})$ as in Figure \ref{IEY1}.
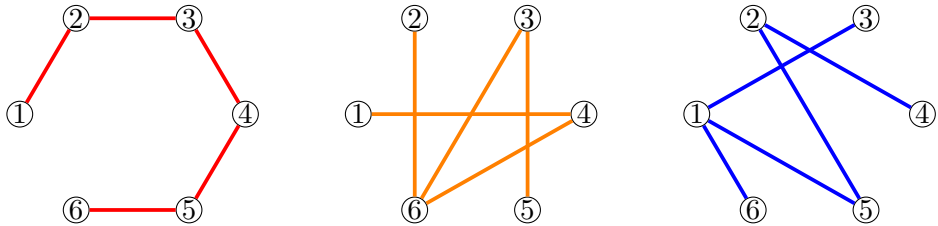
\begin{figure}[h!]
	\centering
	\begin{tikzpicture}
		[scale=1.5,auto=left,every node/.style={shape = circle, draw, fill = white,minimum size = 1pt, inner sep=0.3pt}]
		\node (n1) at (0,0) {1};
		\node (n2) at (0.5,0.85)  {2};
		\node (n3) at (1.5,0.85)  {3};
		\node (n4) at (2,0)  {4};
		\node (n5) at (1.5,-0.85)  {5};
		\node (n6) at (0.5,-0.85)  {6};
		\foreach \from/\to in {n1/n2,n2/n3,n3/n4,n4/n5,n5/n6}
		\draw[line width=0.5mm,red]  (\from) -- (\to);	
		\node (n11) at (3,0) {1};
		\node (n21) at (3.5,0.85)  {2};
		\node (n31) at (4.5,0.85)  {3};
		\node (n41) at (5,0)  {4};
		\node (n51) at (4.5,-0.85)  {5};
		\node (n61) at (3.5,-0.85)  {6};
		\foreach \from/\to in {n11/n41,n21/n61,n31/n51,n31/n61,n41/n61}
		\draw[line width=0.5mm,orange]  (\from) -- (\to);	
		
		\node (n12) at (6,0) {1};
		\node (n22) at (6.5,0.85)  {2};
		\node (n32) at (7.5,0.85)  {3};
		\node (n42) at (8,0)  {4};
		\node (n52) at (7.5,-0.85)  {5};
		\node (n62) at (6.5,-0.85)  {6};
		\foreach \from/\to in {n12/n32,n12/n52,n12/n62,n22/n42,n22/n52}
		\draw[line width=0.5mm,blue]  (\from) -- (\to);	
		
	\end{tikzpicture}
	\caption{$P_6=(\Gamma_1^{(6)},\Gamma_2^{(6)},\Gamma_3^{(6)})$  of type $(I_6,E_6,Y_6)$ } \label{IEY1}
\end{figure}
\\
In this case, all three graphs are of different type, so if $(\sigma,\tau)$ is in the stabilizer, then $\tau$ is the identity. So, for $\sigma$ to stabilize $\Gamma_1^{(6)}$, we necessarily have that $\sigma=e$ or $\sigma=(1,6)(2,5)(3,4)$. In the latter case, $\sigma$ sends the edge $(2,6)$ to $(1,5)$ and so it will not send $\Gamma_2^{(6)}$ to $\Gamma_2^{(6)}$.We get that 
\begin{eqnarray*}
Stab_{S_6\times S_3}((\Gamma_1^{(6)},\Gamma_2^{(6)},\Gamma_3^{(6)}))=\{e_{S_6}\times e_{S_3}\},
\end{eqnarray*}
so the orbit of $P_6$ has 4320 elements. With the case $(I_6,E_6,Y_6)$, we generate 4320 homogeneous cycle-free partitions.

\subsubsection{Type $(I_6, Y_6, E_6)$}
Take $P_{7}=(\Gamma_1^{(7)},\Gamma_2^{(7)},\Gamma_3^{(7)})$ as in Figure \ref{IYE1}.
\begin{figure}[h!]
	\centering
	\begin{tikzpicture}
		[scale=1.5,auto=left,every node/.style={shape = circle, draw, fill = white,minimum size = 1pt, inner sep=0.3pt}]
		\node (n1) at (0,0) {1};
		\node (n2) at (0.5,0.85)  {2};
		\node (n3) at (1.5,0.85)  {3};
		\node (n4) at (2,0)  {4};
		\node (n5) at (1.5,-0.85)  {5};
		\node (n6) at (0.5,-0.85)  {6};
		\foreach \from/\to in {n1/n2,n2/n3,n3/n4,n4/n5,n5/n6}
		\draw[line width=0.5mm,red]  (\from) -- (\to);	
		\node (n11) at (3,0) {1};
		\node (n21) at (3.5,0.85)  {2};
		\node (n31) at (4.5,0.85)  {3};
		\node (n41) at (5,0)  {4};
		\node (n51) at (4.5,-0.85)  {5};
		\node (n61) at (3.5,-0.85)  {6};
		\foreach \from/\to in {n11/n51,n21/n51,n21/n61,n31/n61,n41/n61}
		\draw[line width=0.5mm,orange]  (\from) -- (\to);	
		
		\node (n12) at (6,0) {1};
		\node (n22) at (6.5,0.85)  {2};
		\node (n32) at (7.5,0.85)  {3};
		\node (n42) at (8,0)  {4};
		\node (n52) at (7.5,-0.85)  {5};
		\node (n62) at (6.5,-0.85)  {6};
		\foreach \from/\to in {n12/n32,n12/n42,n12/n62,n22/n42,n32/n52}
		\draw[line width=0.5mm,blue]  (\from) -- (\to);	
		
	\end{tikzpicture}
	\caption{$P_{7}=(\Gamma_1^{(7)},\Gamma_2^{(7)},\Gamma_3^{(7)})$  of type $(I_6,Y_6,E_6)$ } \label{IYE1}
\end{figure}
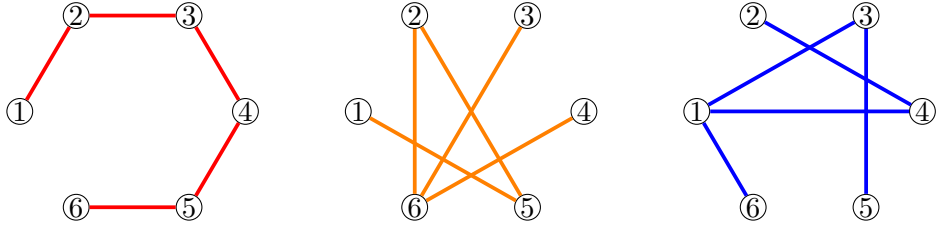
\\
We get that 
\begin{eqnarray*}
Stab_{S_6\times S_3}((\Gamma_1^{(7)},\Gamma_2^{(7)},\Gamma_3^{(7)}))=\{e_{S_6}\times e_{S_3}\},
\end{eqnarray*} 
so the orbit of $P_{7}$ has 4320 elements. With the case $(I_6,Y_6,E_6)$, we generate 4320 homogeneous cycle-free partitions.

\subsubsection{Type $(I_6, I_6, Y_6)$}
Take $P_8=(\Gamma_1^{(8)},\Gamma_2^{(8)},\Gamma_3^{(8)})$ as in Figure \ref{IIY1}.
\begin{figure}[h!]
	\centering
	\begin{tikzpicture}
		[scale=1.5,auto=left,every node/.style={shape = circle, draw, fill = white,minimum size = 1pt, inner sep=0.3pt}]
		\node (n1) at (0,0) {1};
		\node (n2) at (0.5,0.85)  {2};
		\node (n3) at (1.5,0.85)  {3};
		\node (n4) at (2,0)  {4};
		\node (n5) at (1.5,-0.85)  {5};
		\node (n6) at (0.5,-0.85)  {6};
		\foreach \from/\to in {n1/n2,n2/n3,n3/n4,n4/n5,n5/n6}
		\draw[line width=0.5mm,red]  (\from) -- (\to);	
		\node (n11) at (3,0) {1};
		\node (n21) at (3.5,0.85)  {2};
		\node (n31) at (4.5,0.85)  {3};
		\node (n41) at (5,0)  {4};
		\node (n51) at (4.5,-0.85)  {5};
		\node (n61) at (3.5,-0.85)  {6};
		\foreach \from/\to in {n11/n61,n21/n41,n21/n51,n31/n51,n41/n61}
		\draw[line width=0.5mm,orange]  (\from) -- (\to);	
		
		\node (n12) at (6,0) {1};
		\node (n22) at (6.5,0.85)  {2};
		\node (n32) at (7.5,0.85)  {3};
		\node (n42) at (8,0)  {4};
		\node (n52) at (7.5,-0.85)  {5};
		\node (n62) at (6.5,-0.85)  {6};
		\foreach \from/\to in {n12/n32,n12/n42,n12/n52,n22/n62,n32/n62}
		\draw[line width=0.5mm,blue]  (\from) -- (\to);	
		
	\end{tikzpicture}
	\caption{$P_8=(\Gamma_1^{(8)},\Gamma_2^{(8)},\Gamma_3^{(8)})$  of type $(I_6,I_6,Y_6)$  } \label{IIY1}
	\end{figure}
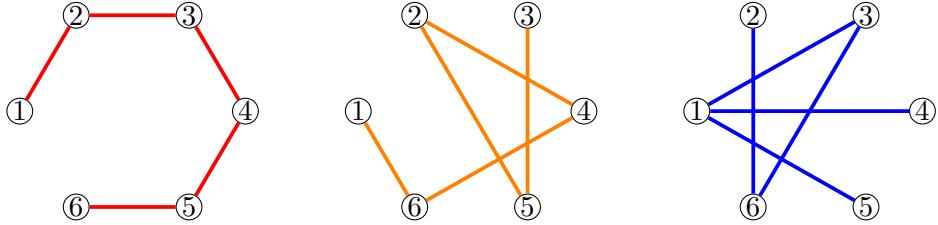

We have that \begin{eqnarray*}Stab_{S_6\times S_3}((\Gamma_1^{(8)},\Gamma_2^{(8)},\Gamma_3^{(8)}))=\{e_{S_6}\times e_{S_3}\},
\end{eqnarray*}
so the orbit of $P_8$  has 4320 elements.

Take $P_9=(\Gamma_1^{(9)},\Gamma_2^{(9)},\Gamma_3^{(9)})$ as in Figure \ref{IIY2}.
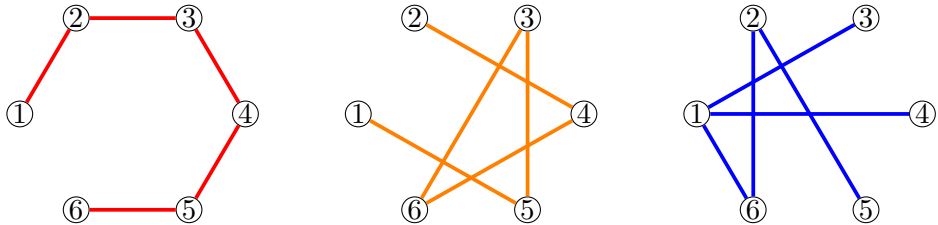
\begin{figure}[h!]
	\centering
	\begin{tikzpicture}
		[scale=1.5,auto=left,every node/.style={shape = circle, draw, fill = white,minimum size = 1pt, inner sep=0.3pt}]
		\node (n1) at (0,0) {1};
		\node (n2) at (0.5,0.85)  {2};
		\node (n3) at (1.5,0.85)  {3};
		\node (n4) at (2,0)  {4};
		\node (n5) at (1.5,-0.85)  {5};
		\node (n6) at (0.5,-0.85)  {6};
		\foreach \from/\to in {n1/n2,n2/n3,n3/n4,n4/n5,n5/n6}
		\draw[line width=0.5mm,red]  (\from) -- (\to);	
		\node (n11) at (3,0) {1};
		\node (n21) at (3.5,0.85)  {2};
		\node (n31) at (4.5,0.85)  {3};
		\node (n41) at (5,0)  {4};
		\node (n51) at (4.5,-0.85)  {5};
		\node (n61) at (3.5,-0.85)  {6};
		\foreach \from/\to in {n11/n51,n21/n41,n31/n51,n31/n61,n41/n61}
		\draw[line width=0.5mm,orange]  (\from) -- (\to);	
		
		\node (n12) at (6,0) {1};
		\node (n22) at (6.5,0.85)  {2};
		\node (n32) at (7.5,0.85)  {3};
		\node (n42) at (8,0)  {4};
		\node (n52) at (7.5,-0.85)  {5};
		\node (n62) at (6.5,-0.85)  {6};
		\foreach \from/\to in {n12/n32,n12/n42,n12/n62,n22/n62,n22/n52}
		\draw[line width=0.5mm,blue]  (\from) -- (\to);	
		
	\end{tikzpicture}
	\caption{$P_9=(\Gamma_1^{(9)},\Gamma_2^{(9)},\Gamma_3^{(9)})$  of type $(I_6,I_6,Y_6)$  } \label{IIY2}
\end{figure}
\\
We have that 
\begin{eqnarray*}
Stab_{S_6\times S_3}((\Gamma_1^{(9)},\Gamma_2^{(9)},\Gamma_3^{(9)}))=\{e_{S_6}\times e_{S_3}\},
\end{eqnarray*}
thus the orbit of $P_9$  has 4320 elements. In summary, with the case $(I_6,I_6,Y_6)$, we generate 8640 homogeneous cycle-free partitions.

\subsubsection{Type $(I_6, I_6, H_6)$}
Take $P_{10}=(\Gamma_1^{(10)},\Gamma_2^{(10)},\Gamma_3^{(10)})$ as in Figure \ref{IIH1}.
\begin{figure}[h!]
	\centering
	\begin{tikzpicture}
		[scale=1.5,auto=left,every node/.style={shape = circle, draw, fill = white,minimum size = 1pt, inner sep=0.3pt}]
		\node (n1) at (0,0) {1};
		\node (n2) at (0.5,0.85)  {2};
		\node (n3) at (1.5,0.85)  {3};
		\node (n4) at (2,0)  {4};
		\node (n5) at (1.5,-0.85)  {5};
		\node (n6) at (0.5,-0.85)  {6};
		\foreach \from/\to in {n1/n2,n2/n3,n3/n4,n4/n5,n5/n6}
		\draw[line width=0.5mm,red]  (\from) -- (\to);	
		\node (n11) at (3,0) {1};
		\node (n21) at (3.5,0.85)  {2};
		\node (n31) at (4.5,0.85)  {3};
		\node (n41) at (5,0)  {4};
		\node (n51) at (4.5,-0.85)  {5};
		\node (n61) at (3.5,-0.85)  {6};
		\foreach \from/\to in {n11/n41,n21/n41,n21/n51,n31/n51,n31/n61}
		\draw[line width=0.5mm,orange]  (\from) -- (\to);	
		
		\node (n12) at (6,0) {1};
		\node (n22) at (6.5,0.85)  {2};
		\node (n32) at (7.5,0.85)  {3};
		\node (n42) at (8,0)  {4};
		\node (n52) at (7.5,-0.85)  {5};
		\node (n62) at (6.5,-0.85)  {6};
		\foreach \from/\to in {n12/n32,n12/n52,n12/n62,n22/n62,n42/n62}
		\draw[line width=0.5mm,blue]  (\from) -- (\to);	
		
	\end{tikzpicture}
	\caption{$P_{10}=(\Gamma_1^{(10)},\Gamma_2^{(10)},\Gamma_3^{(10)})$  of type $(I_6,I_6,H_6)$  } \label{IIH1}
\end{figure}
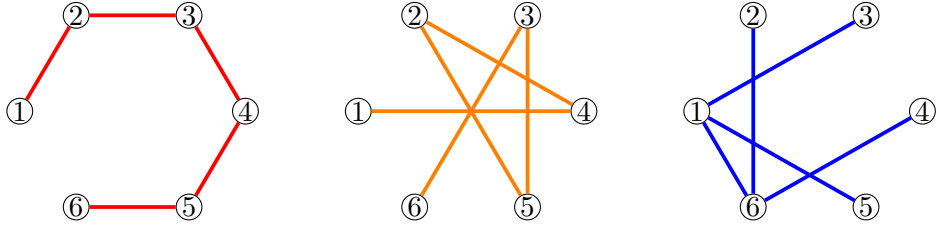
\\
We get that
\begin{eqnarray*}
Stab_{S_6\times S_3}((\Gamma_1^{(10)},\Gamma_2^{(10)},\Gamma_3^{(10)}))=\{e_{S_6}\times e_{S_3},(1,6)(2,5)(3,4)\times e_{S_3}\},
\end{eqnarray*}
so the orbit of $P_{10}$ has 2160 elements. With the case $(I_6,I_6,H_6)$, we generate 2160 homogeneous cycle-free partitions.
\\

\subsubsection{Type $(E_6, E_6, I_6)$}
Take $P_{11}=(\Gamma_1^{(11)},\Gamma_2^{(11)},\Gamma_3^{(11)})$ as in Figure \ref{EEI1}.
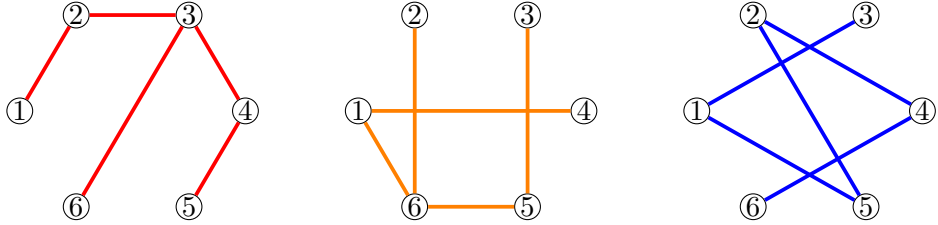
\begin{figure}[h!]
	\centering
	\begin{tikzpicture}
		[scale=1.5,auto=left,every node/.style={shape = circle, draw, fill = white,minimum size = 1pt, inner sep=0.3pt}]
		\node (n1) at (0,0) {1};
		\node (n2) at (0.5,0.85)  {2};
		\node (n3) at (1.5,0.85)  {3};
		\node (n4) at (2,0)  {4};
		\node (n5) at (1.5,-0.85)  {5};
		\node (n6) at (0.5,-0.85)  {6};
		\foreach \from/\to in {n1/n2,n2/n3,n3/n4,n3/n6,n4/n5}
		\draw[line width=0.5mm,red]  (\from) -- (\to);	
		\node (n11) at (3,0) {1};
		\node (n21) at (3.5,0.85)  {2};
		\node (n31) at (4.5,0.85)  {3};
		\node (n41) at (5,0)  {4};
		\node (n51) at (4.5,-0.85)  {5};
		\node (n61) at (3.5,-0.85)  {6};
		\foreach \from/\to in {n11/n41,n11/n61,n21/n61,n31/n51,n51/n61}
		\draw[line width=0.5mm,orange]  (\from) -- (\to);	
		
		\node (n12) at (6,0) {1};
		\node (n22) at (6.5,0.85)  {2};
		\node (n32) at (7.5,0.85)  {3};
		\node (n42) at (8,0)  {4};
		\node (n52) at (7.5,-0.85)  {5};
		\node (n62) at (6.5,-0.85)  {6};
		\foreach \from/\to in {n12/n32,n12/n52,n22/n42,n22/n52,n42/n62}
		\draw[line width=0.5mm,blue]  (\from) -- (\to);	
		
	\end{tikzpicture}
	\caption{$P_{11}=(\Gamma_1^{(11)},\Gamma_2^{(11)},\Gamma_3^{(11)})$  of type $(E_6,E_6,I_6)$ } \label{EEI1}
\end{figure}
\\
We get that 
\begin{eqnarray*}
Stab_{S_6\times S_3}((\Gamma_1^{(11)},\Gamma_2^{(11)},\Gamma_3^{(11)}))=\{e_{S_6}\times e_{S_3}\},
\end{eqnarray*}
 so the orbit of $P_{11}$ has 4320 elements. With the case $(E_6,E_6,I_6)$, we generate 4320 homogeneous cycle-free partitions.

\subsubsection{Type $(E_6, E_6, E_6)$}
Take  $P_{12}=(\Gamma_1^{(12)},\Gamma_2^{(12)},\Gamma_3^{(12)})$ as in Figure \ref{EEE1}.
\begin{figure}[h!]
	\centering
	\begin{tikzpicture}
		[scale=1.5,auto=left,every node/.style={shape = circle, draw, fill = white,minimum size = 1pt, inner sep=0.3pt}]
		\node (n1) at (0,0) {1};
		\node (n2) at (0.5,0.85)  {2};
		\node (n3) at (1.5,0.85)  {3};
		\node (n4) at (2,0)  {4};
		\node (n5) at (1.5,-0.85)  {5};
		\node (n6) at (0.5,-0.85)  {6};
		\foreach \from/\to in {n1/n2,n2/n3,n3/n4,n3/n6,n4/n5}
		\draw[line width=0.5mm,red]  (\from) -- (\to);	
		\node (n11) at (3,0) {1};
		\node (n21) at (3.5,0.85)  {2};
		\node (n31) at (4.5,0.85)  {3};
		\node (n41) at (5,0)  {4};
		\node (n51) at (4.5,-0.85)  {5};
		\node (n61) at (3.5,-0.85)  {6};
		\foreach \from/\to in {n11/n41,n11/n51,n21/n51,n21/n61,n31/n51}
		\draw[line width=0.5mm,orange]  (\from) -- (\to);	
		
		\node (n12) at (6,0) {1};
		\node (n22) at (6.5,0.85)  {2};
		\node (n32) at (7.5,0.85)  {3};
		\node (n42) at (8,0)  {4};
		\node (n52) at (7.5,-0.85)  {5};
		\node (n62) at (6.5,-0.85)  {6};
		\foreach \from/\to in {n12/n32,n12/n62,n22/n42,n42/n62,n52/n62}
		\draw[line width=0.5mm,blue]  (\from) -- (\to);	
		
	\end{tikzpicture}
	\caption{$P_{12}=(\Gamma_1^{(12)},\Gamma_2^{(12)},\Gamma_3^{(12)})$  of type $(E_6,E_6,E_6)$  } \label{EEE1}
\end{figure}
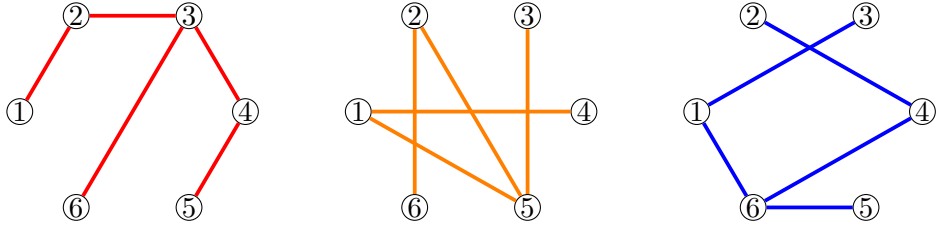
\\
We get that 
\begin{eqnarray*}
Stab_{S_6\times S_3}(P_{12})=\{e_{S_6}\times e_{S_3}, (1,4,2)(3,5,6)\times(1,2,3),(1,2,4)(3,6,5)\times(1,3,2)\},
\end{eqnarray*}
so the orbit of $P_{12}$ has 1440 elements. With the case $(E_6,E_6,E_6)$, we generate 1440 ho\-mo\-ge\-ne\-ous cycle-free partitions.

\subsubsection{Type $(E_6, E_6, Y_6)$}
Take $P_{13}=(\Gamma_1^{(13)},\Gamma_2^{(13)},\Gamma_3^{(13)})$ as in Figure \ref{EEY1}.
\begin{figure}[h!]
	\centering
	\begin{tikzpicture}
		[scale=1.5,auto=left,every node/.style={shape = circle, draw, fill = white,minimum size = 1pt, inner sep=0.3pt}]
		\node (n1) at (0,0) {1};
		\node (n2) at (0.5,0.85)  {2};
		\node (n3) at (1.5,0.85)  {3};
		\node (n4) at (2,0)  {4};
		\node (n5) at (1.5,-0.85)  {5};
		\node (n6) at (0.5,-0.85)  {6};
		\foreach \from/\to in {n1/n2,n2/n3,n3/n4,n3/n6,n4/n5}
		\draw[line width=0.5mm,red]  (\from) -- (\to);	
		\node (n11) at (3,0) {1};
		\node (n21) at (3.5,0.85)  {2};
		\node (n31) at (4.5,0.85)  {3};
		\node (n41) at (5,0)  {4};
		\node (n51) at (4.5,-0.85)  {5};
		\node (n61) at (3.5,-0.85)  {6};
		\foreach \from/\to in {n11/n61,n21/n41,n31/n51,n41/n61,n51/n61}
		\draw[line width=0.5mm,orange]  (\from) -- (\to);	
		
		\node (n12) at (6,0) {1};
		\node (n22) at (6.5,0.85)  {2};
		\node (n32) at (7.5,0.85)  {3};
		\node (n42) at (8,0)  {4};
		\node (n52) at (7.5,-0.85)  {5};
		\node (n62) at (6.5,-0.85)  {6};
		\foreach \from/\to in {n12/n32,n12/n42,n12/n52,n22/n52,n22/n62}
		\draw[line width=0.5mm,blue]  (\from) -- (\to);	
		
	\end{tikzpicture}
	\caption{$P_{13}=(\Gamma_1^{(13)},\Gamma_2^{(13)},\Gamma_3^{(13)})$  of type $(E_6,E_6,Y_6)$ } \label{EEY1}
\end{figure}
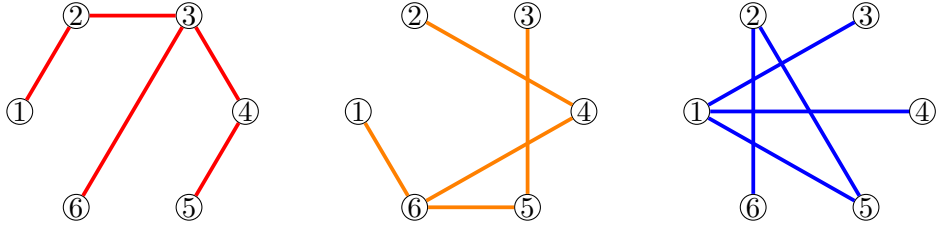
\\
We get that 
\begin{eqnarray*}
Stab_{S_6\times S_3}(P_{13})=\{e_{S_6}\times e_{S_3}\},
\end{eqnarray*}
so the orbit of $P_{13}$ has 4320 elements.

Take $P_{14}=(\Gamma_1^{(14)},\Gamma_2^{(14)},\Gamma_3^{(14)})$ as in Figure \ref{EEY2}.
\begin{figure}[h!]
	\centering
	\begin{tikzpicture}
		[scale=1.5,auto=left,every node/.style={shape = circle, draw, fill = white,minimum size = 1pt, inner sep=0.3pt}]
		\node (n1) at (0,0) {1};
		\node (n2) at (0.5,0.85)  {2};
		\node (n3) at (1.5,0.85)  {3};
		\node (n4) at (2,0)  {4};
		\node (n5) at (1.5,-0.85)  {5};
		\node (n6) at (0.5,-0.85)  {6};
		\foreach \from/\to in {n1/n2,n2/n3,n3/n4,n3/n6,n4/n5}
		\draw[line width=0.5mm,red]  (\from) -- (\to);	
		\node (n11) at (3,0) {1};
		\node (n21) at (3.5,0.85)  {2};
		\node (n31) at (4.5,0.85)  {3};
		\node (n41) at (5,0)  {4};
		\node (n51) at (4.5,-0.85)  {5};
		\node (n61) at (3.5,-0.85)  {6};
		\foreach \from/\to in {n11/n61,n21/n41,n31/n51,n21/n51,n51/n61}
		\draw[line width=0.5mm,orange]  (\from) -- (\to);	
		
		\node (n12) at (6,0) {1};
		\node (n22) at (6.5,0.85)  {2};
		\node (n32) at (7.5,0.85)  {3};
		\node (n42) at (8,0)  {4};
		\node (n52) at (7.5,-0.85)  {5};
		\node (n62) at (6.5,-0.85)  {6};
		\foreach \from/\to in {n12/n32,n12/n42,n12/n52,n42/n62,n22/n62}
		\draw[line width=0.5mm,blue]  (\from) -- (\to);	
		
	\end{tikzpicture}
	\caption{$P_{14}=(\Gamma_1^{(14)},\Gamma_2^{(14)},\Gamma_3^{(14)})$  of type $(E_6,E_6,Y_6)$ } \label{EEY2}
\end{figure}
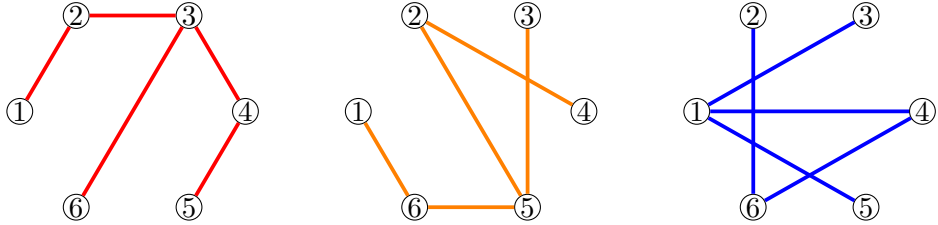
\\
We get that 
\begin{eqnarray*}
Stab_{S_6\times S_3}(P_{14})=\{e_{S_6}\times e_{S_3}\},
\end{eqnarray*}
so the orbit of $P_{14}$ has 4320 elements. In summary, with the case $(E_6,E_6,Y_6)$, we generate 8640 homogeneous cycle-free partitions.

\subsubsection{Type $(E_6, Y_6, H_6)$}
Take $P_{15}=(\Gamma_1^{(15)},\Gamma_2^{(15)},\Gamma_3^{(15)})$ as in Figure \ref{EYH1}.
\begin{figure}[h!]
	\centering
	\begin{tikzpicture}
		[scale=1.5,auto=left,every node/.style={shape = circle, draw, fill = white,minimum size = 1pt, inner sep=0.3pt}]
		\node (n1) at (0,0) {1};
		\node (n2) at (0.5,0.85)  {2};
		\node (n3) at (1.5,0.85)  {3};
		\node (n4) at (2,0)  {4};
		\node (n5) at (1.5,-0.85)  {5};
		\node (n6) at (0.5,-0.85)  {6};
		\foreach \from/\to in {n1/n2,n2/n3,n3/n4,n3/n6,n4/n5}
		\draw[line width=0.5mm,red]  (\from) -- (\to);	
		\node (n11) at (3,0) {1};
		\node (n21) at (3.5,0.85)  {2};
		\node (n31) at (4.5,0.85)  {3};
		\node (n41) at (5,0)  {4};
		\node (n51) at (4.5,-0.85)  {5};
		\node (n61) at (3.5,-0.85)  {6};
		\foreach \from/\to in {n11/n51,n21/n41,n21/n51,n31/n51,n41/n61}
		\draw[line width=0.5mm,orange]  (\from) -- (\to);	
		
		\node (n12) at (6,0) {1};
		\node (n22) at (6.5,0.85)  {2};
		\node (n32) at (7.5,0.85)  {3};
		\node (n42) at (8,0)  {4};
		\node (n52) at (7.5,-0.85)  {5};
		\node (n62) at (6.5,-0.85)  {6};
		\foreach \from/\to in {n12/n32,n12/n42,n12/n62,n22/n62,n52/n62}
		\draw[line width=0.5mm,blue]  (\from) -- (\to);	
		
	\end{tikzpicture}
	\caption{$P_{15}=(\Gamma_1^{(15)},\Gamma_2^{(15)},\Gamma_3^{(15)})$  of type $(E_6,Y_6,H_6)$ } \label{EYH1}
\end{figure}
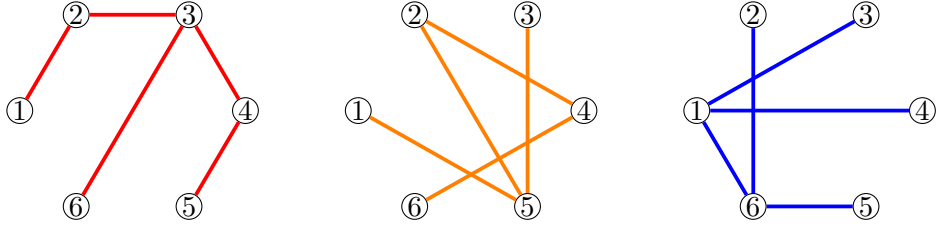
\\
We get that 
\begin{eqnarray*}
Stab_{S_6\times S_3}(P_{15})=\{e_{S_6}\times e_{S_3}\},
\end{eqnarray*}
so the orbit of $P_{15}$ has 4320 elements. With the case $(E_6,Y_6,H_6)$, we generate 4320 homogeneous cycle-free partitions.
\\

\subsubsection{Type $(E_6, H_6, Y_6)$}
Take $P_{16}=(\Gamma_1^{(16)},\Gamma_2^{(16)},\Gamma_3^{(16)})$ as in Figure \ref{EHY1}.
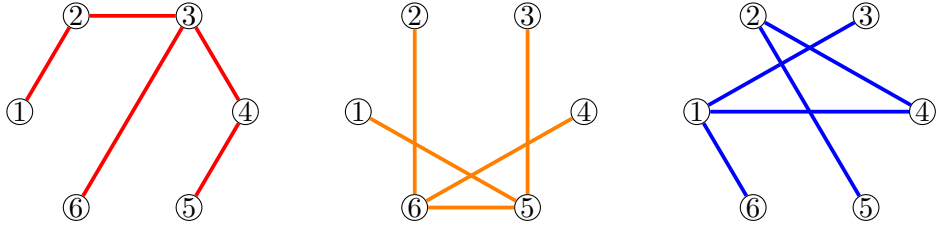
\begin{figure}[h!]
	\centering
	\begin{tikzpicture}
		[scale=1.5,auto=left,every node/.style={shape = circle, draw, fill = white,minimum size = 1pt, inner sep=0.3pt}]
		\node (n1) at (0,0) {1};
		\node (n2) at (0.5,0.85)  {2};
		\node (n3) at (1.5,0.85)  {3};
		\node (n4) at (2,0)  {4};
		\node (n5) at (1.5,-0.85)  {5};
		\node (n6) at (0.5,-0.85)  {6};
		\foreach \from/\to in {n1/n2,n2/n3,n3/n4,n3/n6,n4/n5}
		\draw[line width=0.5mm,red]  (\from) -- (\to);	
		\node (n11) at (3,0) {1};
		\node (n21) at (3.5,0.85)  {2};
		\node (n31) at (4.5,0.85)  {3};
		\node (n41) at (5,0)  {4};
		\node (n51) at (4.5,-0.85)  {5};
		\node (n61) at (3.5,-0.85)  {6};
		\foreach \from/\to in {n11/n51,n21/n61,n31/n51,n41/n61,n51/n61}
		\draw[line width=0.5mm,orange]  (\from) -- (\to);	
		
		\node (n12) at (6,0) {1};
		\node (n22) at (6.5,0.85)  {2};
		\node (n32) at (7.5,0.85)  {3};
		\node (n42) at (8,0)  {4};
		\node (n52) at (7.5,-0.85)  {5};
		\node (n62) at (6.5,-0.85)  {6};
		\foreach \from/\to in {n12/n32,n12/n42,n12/n62,n22/n42,n22/n52}
		\draw[line width=0.5mm,blue]  (\from) -- (\to);	
		
	\end{tikzpicture}
	\caption{$P_{16}=(\Gamma_1^{(16)},\Gamma_2^{(16)},\Gamma_3^{(16)})$  of type $(E_6,H_6,Y_6)$ } \label{EHY1}
\end{figure}
\\
We get that 
\begin{eqnarray*}
Stab_{S_6\times S_3}(P_{16})=\{e_{S_6}\times e_{S_3}\},
\end{eqnarray*}
so the orbit of $P_{16}$ has 4320 elements. With the case $(E_6,H_6,Y_6)$, we generate 4320 homogeneous cycle-free partitions.

\subsubsection{Type $(Y_6, Y_6, I_6)$}
Take $P_{17}=(\Gamma_1^{(17)},\Gamma_2^{(17)},\Gamma_3^{(17)})$ as in Figure \ref{YYI1}.
\begin{figure}[h!]
	\centering
	\begin{tikzpicture}
		[scale=1.5,auto=left,every node/.style={shape = circle, draw, fill = white,minimum size = 1pt, inner sep=0.3pt}]
		\node (n1) at (0,0) {1};
		\node (n2) at (0.5,0.85)  {2};
		\node (n3) at (1.5,0.85)  {3};
		\node (n4) at (2,0)  {4};
		\node (n5) at (1.5,-0.85)  {5};
		\node (n6) at (0.5,-0.85)  {6};
		\foreach \from/\to in {n1/n2,n2/n3,n3/n4,n4/n5,n4/n6}
		\draw[line width=0.5mm,red]  (\from) -- (\to);	
		\node (n11) at (3,0) {1};
		\node (n21) at (3.5,0.85)  {2};
		\node (n31) at (4.5,0.85)  {3};
		\node (n41) at (5,0)  {4};
		\node (n51) at (4.5,-0.85)  {5};
		\node (n61) at (3.5,-0.85)  {6};
		\foreach \from/\to in {n11/n41,n11/n61,n21/n51,n31/n51,n51/n61}
		\draw[line width=0.5mm,orange]  (\from) -- (\to);	
		
		\node (n12) at (6,0) {1};
		\node (n22) at (6.5,0.85)  {2};
		\node (n32) at (7.5,0.85)  {3};
		\node (n42) at (8,0)  {4};
		\node (n52) at (7.5,-0.85)  {5};
		\node (n62) at (6.5,-0.85)  {6};
		\foreach \from/\to in {n12/n32,n12/n52,n22/n42,n22/n62,n32/n62}
		\draw[line width=0.5mm,blue]  (\from) -- (\to);	
		
	\end{tikzpicture}
	\caption{$P_{17}=(\Gamma_1^{(17)},\Gamma_2^{(17)},\Gamma_3^{(17)})$  of type $(Y_6,Y_6,I_6)$ } \label{YYI1}
\end{figure}
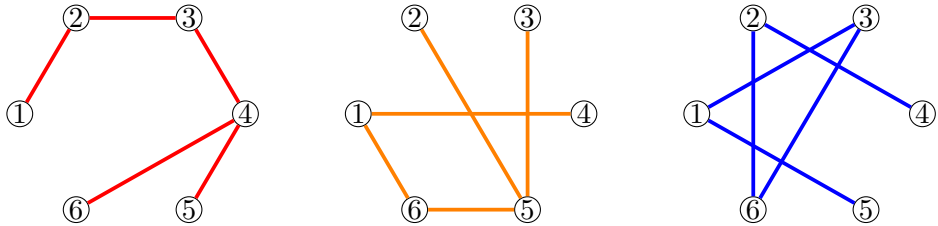
\\
We get that 
\begin{eqnarray*}
Stab_{S_6\times S_3}(P_{17})=\{e_{S_6}\times e_{S_3}\},
\end{eqnarray*}
so the orbit of $P_{17}$ has 4320 elements. With the case $(Y_6,Y_6,I_6)$, we generate 4320 homogeneous cycle-free partitions.
\\
\subsubsection{Type $(Y_6, Y_6, E_6)$} Take $P_{18}=(\Gamma_1^{(18)},\Gamma_2^{(18)},\Gamma_3^{(18)})$ as in Figure \ref{YYE1}.
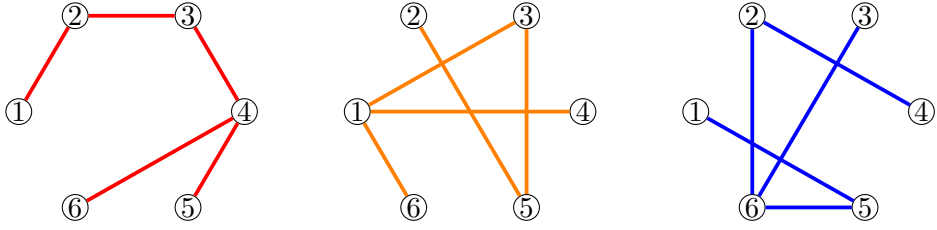
\begin{figure}[h!]
	\centering
	\begin{tikzpicture}
		[scale=1.5,auto=left,every node/.style={shape = circle, draw, fill = white,minimum size = 1pt, inner sep=0.3pt}]
		\node (n1) at (0,0) {1};
		\node (n2) at (0.5,0.85)  {2};
		\node (n3) at (1.5,0.85)  {3};
		\node (n4) at (2,0)  {4};
		\node (n5) at (1.5,-0.85)  {5};
		\node (n6) at (0.5,-0.85)  {6};
		\foreach \from/\to in {n1/n2,n2/n3,n3/n4,n4/n5,n4/n6}
		\draw[line width=0.5mm,red]  (\from) -- (\to);	
		\node (n11) at (3,0) {1};
		\node (n21) at (3.5,0.85)  {2};
		\node (n31) at (4.5,0.85)  {3};
		\node (n41) at (5,0)  {4};
		\node (n51) at (4.5,-0.85)  {5};
		\node (n61) at (3.5,-0.85)  {6};
		\foreach \from/\to in {n11/n31,n11/n41,n11/n61,n21/n51,n31/n51}
		\draw[line width=0.5mm,orange]  (\from) -- (\to);	
		
		\node (n12) at (6,0) {1};
		\node (n22) at (6.5,0.85)  {2};
		\node (n32) at (7.5,0.85)  {3};
		\node (n42) at (8,0)  {4};
		\node (n52) at (7.5,-0.85)  {5};
		\node (n62) at (6.5,-0.85)  {6};
		\foreach \from/\to in {n12/n52,n22/n42,n22/n62,n32/n62,n52/n62}
		\draw[line width=0.5mm,blue]  (\from) -- (\to);	
		
	\end{tikzpicture}
	\caption{$P_{18}=(\Gamma_1^{(18)},\Gamma_2^{(18)},\Gamma_3^{(18)})$  of type $(Y_6,Y_6,E_6)$ } \label{YYE1}
\end{figure}
\\
We get that 
\begin{eqnarray*}
Stab_{S_6\times S_3}(P_{18})=\{e_{S_6}\times e_{S_3}\},
\end{eqnarray*}
so the orbit of $P_{18}$ has 4320 elements. With the case $(Y_6,Y_6,E_6)$, we generate 4320 homogeneous cycle-free partitions.

\subsubsection{Type $(H_6, H_6, H_6)$} Take $P_{19}=(\Gamma_1^{(19)},\Gamma_2^{(19)},\Gamma_3^{(19)})$ as in Figure \ref{HHH1}.
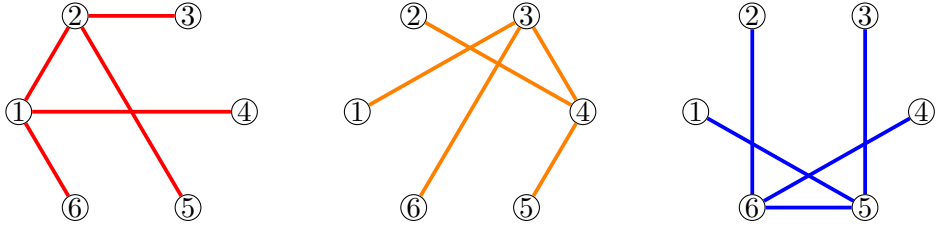
\begin{figure}[h!]
	\centering
	\begin{tikzpicture}
		[scale=1.5,auto=left,every node/.style={shape = circle, draw, fill = white,minimum size = 1pt, inner sep=0.3pt}]
		\node (n1) at (0,0) {1};
		\node (n2) at (0.5,0.85)  {2};
		\node (n3) at (1.5,0.85)  {3};
		\node (n4) at (2,0)  {4};
		\node (n5) at (1.5,-0.85)  {5};
		\node (n6) at (0.5,-0.85)  {6};
		\foreach \from/\to in {n1/n2,n1/n4,n1/n6,n2/n3,n2/n5}
		\draw[line width=0.5mm,red]  (\from) -- (\to);	
		\node (n11) at (3,0) {1};
		\node (n21) at (3.5,0.85)  {2};
		\node (n31) at (4.5,0.85)  {3};
		\node (n41) at (5,0)  {4};
		\node (n51) at (4.5,-0.85)  {5};
		\node (n61) at (3.5,-0.85)  {6};
		\foreach \from/\to in {n11/n31,n21/n41,n31/n41,n31/n61,n41/n51}
		\draw[line width=0.5mm,orange]  (\from) -- (\to);	
		
		\node (n12) at (6,0) {1};
		\node (n22) at (6.5,0.85)  {2};
		\node (n32) at (7.5,0.85)  {3};
		\node (n42) at (8,0)  {4};
		\node (n52) at (7.5,-0.85)  {5};
		\node (n62) at (6.5,-0.85)  {6};
		\foreach \from/\to in {n12/n52,n22/n62,n32/n52,n42/n62,n52/n62}
		\draw[line width=0.5mm,blue]  (\from) -- (\to);	
		
	\end{tikzpicture}
	\caption{$P_{19}=(\Gamma_1^{(19)},\Gamma_2^{(19)},\Gamma_3^{(19)})$  of type $(H_6,H_6,H_6)$ } \label{HHH1}
\end{figure}
\\
We get that
\begin{eqnarray*}
&Stab_{S_6\times S_3}(P_{19})=\{e_{S_6}\times e_{S_3},(1,2)(3,4)(5,6)\times e_{S_3},(1,3,5,2,4,6)\times(1,2,3),&\\
&(1,4,5)(2,3,6)\times(1,2,3),(1,5,4)(2,6,3)\times(1,3,2),(1,6,4,2,5,3)\times(1,3,2)\},&
\end{eqnarray*}
so the orbit of $P_{19}$ has 720 elements. With the case $(H_6,H_6,H_6)$, we generate 720 homogeneous cycle-free partitions.

\subsection{The map $\varepsilon_3^{S^2}$} If we add the above cases we accounted  for all the $\num{66240}$ homogeneous cycle-free $3$-partition of the garph $K_6$. With this classification we can give an explicit expression for $\varepsilon_3^{S^2}:\mathcal{P}_3^{h,cf}(K_{6})\to \{1,-1\}$.
\begin{remark}
For all $1\leq i\leq 19$, $\sigma\in S_6$ and $\tau\in S_3$ we have

\begin{eqnarray}
\varepsilon_3^{S^2}((\sigma \times \tau)*P_i)=
\begin{cases}
      +1& ~{\rm if} ~\tau\in A_3 \\
     -1& ~{\rm if} ~\tau\notin A_3.
   \end{cases}
\end{eqnarray}
Notice that the case $d=2$ and $d=3$ are a little bit different, in as such that $\sigma$ does not change the sign of a partition. This was also illustrated in Lemma \ref{actions2n}, where we proved that $\sigma  \rightharpoonup E_d=(sign(\sigma))^{d-1}E_d$. We expect that the same pattern will hold in higher dimensions.
\label{formulaeps}
\end{remark}

\begin{remark} The simplicity of the above formula above can be a little misleading. The hard part in defining $\varepsilon_3^{S^2}$ is choosing  the partitions $P_1,P_2,\dots,P_{19}$ such that the $\varepsilon_3^{S^2}$ is compatible with the $(x,y,z)$ involution for all $1\leq x<y<z\leq 6$ (see  Theorem \ref{th1A}).  This computation was checked using MATLAB.
\end{remark}

\begin{remark} One can see that for every $(\sigma, \tau)\in S_{2d}\times S_d$, $(\Gamma_1,\dots,\Gamma_d)\in \mathcal{P}_d^{h,cf}(K_{2d})$, and any $1\leq x<y<z\leq 2d$ we have 
$$ (\sigma, \tau)*((\Gamma_1,\dots,\Gamma_d)^{(x,y,z)})=((\sigma, \tau)*(\Gamma_1,\dots,\Gamma_d))^{(\sigma(x),\sigma(y),\sigma(z))}.$$
Combining this with the explicit description of the $19$ equivalence classes of the action of the group $S_6\times S_3$ on  $\mathcal{P}_3^{h,cf}(K_{6})$, one can check that the formula from Remark \ref{formulaeps} satisfies the statement in  Theorem \ref{th1A}. More precisely, one can show that for all $1\leq i\leq 19$ and $1\leq x<y<z\leq 6$, there exist $\sigma\in S_6$, $1\leq j\leq 19$, and an  {\bf odd} permutation   $\tau\in S_3$  such that 
$$(\Gamma_1^{(i)}, \Gamma_2^{(i)},\Gamma_3^{(i)})^{(x,y,z)}=(\sigma,\tau)*(\Gamma_1^{(j)}, \Gamma_2^{(j)},\Gamma_3^{(j)}).$$
This gives an alternative (MATLAB-free) proof for Theorem \ref{th1A}. 
\end{remark}

\bibliographystyle{amsalpha}

\end{document}